\documentclass[12pt]{amsart}
\topmargin=-0.5cm
\evensidemargin=0cm
\oddsidemargin=0cm
\textwidth=15.4cm
\textheight=21.8cm

\usepackage{graphicx,hyperref}
\usepackage{amsthm,amsfonts,latexsym,amssymb,cite}

\newtheorem{proposition}{\bf{Proposition}}[section]

\newtheorem{remark}{\sc{Remark} }[section]
\newtheorem{defn}{\sc{Definition} }[section]

\def\build#1_#2{\mathrel{\mathop{\kern 0pt#1}\limits_{#2}}}
  
\begin{document}

\title[Explicit estimates in mixed elliptic problems]{Explicit estimates for  solutions of  mixed elliptic problems}
\author{Luisa Consiglieri}
\address{Luisa Consiglieri, Independent Researcher Professor, European Union}
\urladdr{\href{http://sites.google.com/site/luisaconsiglieri}{http://sites.google.com/site/luisaconsiglieri}}

\begin{abstract}
We deal with the existence of quantitative estimates for
 solutions of mixed problems to an elliptic 
second order equation in divergence form with discontinuous coefficient.
Our concern is to estimate the solutions with explicit 
constants, for domains in 
$\mathbb{R}^n$ ($n\geq 2$) of class $C^{0,1}$.
 The existence of  $L^\infty$ and
 $W^{1,q}$-estimates is assured for $q=2$ and any $q<n/(n-1)$
(depending on the data), 
whenever the coefficient is only measurable and bounded.
The proof method of the quantitative
 $L^\infty$-estimates is based on the DeGiorgi technique
developed by Stampacchia.
By using the potential theory,
 we derive $W^{1,p}$-estimates for different ranges of the exponent
$p$ depending on that the coefficient is either
Dini-continuous or  only measurable and bounded.
In this process, we establish new 
 existences of Green functions on such domains.
The last but not least concern is to unify (whenever possible)
the proofs of the estimates to the extreme Dirichlet and Neumann cases
of the mixed problem.
\end{abstract}

\keywords{elliptic equation; $L^1$-theory; potential theory; regularity}

\subjclass[2010]{35J25, 35D30, 35B50, 35C15, 35J08, 35B65}

\maketitle
\section{Introduction}

The knowledge of the 
data makes all the difference on the real world
 applications of boundary value problems. 
Quantitative estimates are of extremely importance in any other area of science
such as engineering, biology, geology, even physics,
to mention a few.
In the existence theory to the nonlinear elliptic equations,
 fixed point arguments play a crucial role.
The solution may exist such that belongs to a bounded set of a
functional space,  where the
boundedness constant is frequently given in an abstract way. 
Their derivation is so complicated that it is difficult to express them,
or they include unknown ones that are achieved by a contradiction proof,
as for instance the Poincar\'e constant for nonconvex domains.
The majority
of works consider the same symbol for any constant that varies from line to line
along the whole paper (also known as universal constant).
 In conclusion, the final constant of the boundedness
appears completely unknown from the physical point of view.
In presence of this, our first concern is to explicit the dependence on the data
of the boundedness constant. To this end,
first (Section \ref{hilb}) we solve in $H^{1}$ the Dirichlet, mixed
 and Neumann problems
to an elliptic second order equation in divergence form with discontinuous coefficient, and simultaneously
we establish the quantitative estimates with explicit constants.
 Besides in Section \ref{l1data}
we derive $W^{1,q}$ ($q<n/(n-1)$)
estimative constants involving $L^1$ and measure data,
 via the technique of
 solutions obtained by limit approximation (SOLA)  (cf. \cite{bg,aa,lap,p}).

Dirichlet, Neumann, and mixed
 problems with respect to uniformly elliptic equation in divergence form is 
widely investigated
 in the literature (see \cite{adn,bv72,dauge,gia93,gt,grisv,lu,Ni,stamp63-64}
 and the references therein)
when the leading coefficient is a  function  on the spatial variable,
 and the boundary values are  given by  assigned Lebesgue functions.
  Meanwhile, many results on the regularity for elliptic PDE
are appearing \cite{agranovich,bw2004,bw2005,dif,dong,
elschk,geng,gro89,groreh,haller,lv,mey,ott,rag99,sav} 
(see Section \ref{reg} for details).
Notwithstanding their estimates seem to be inadequate for physical and
technological applications. For this reason,
the explicit description of the estimative constants needs to carry out.
Since the smoothness of the solution 
is invalidated by the nonsmoothness of the coefficient and the domain,
Section \ref{linfty} is devoted to the direct derivation of 
global and local $L^\infty$-estimates.

It is known that the information 'The gradient of a quantity
belongs to a $L^p$ space with $p$ larger than the space dimension'
is extremely useful for the analysis
of boundary value problems to nonlinear elliptic equations in divergence form
with leading coefficient 
 $a(x,T)=a(x,T(x))\in L^\infty(\Omega)$, where $T$ is 
  a known  function, usually the temperature function, such as
 the electrical conductivity in
the thermoelectric   \cite{aduf,zamm} and thermoelectrochemical \cite{epjp}
problems. It is also known that one cannot expect in general
that the integrability exponent for the gradient of the solution
of an elliptic equation exceeds a prescribed number
$p>2$, as long as arbitrary elliptic $L^\infty$-coefficients
are admissable  \cite{elschk}.
Having this in mind, in Section \ref{reg} we derive
$W^{1,p}$-estimates  of weak solutions,
which verify the representation formula,
of the 
Dirichlet, Neumann, and mixed problems to
an elliptic second order equation in divergence form.
The proof is based on the existence of 
 Green kernels, which are described in Section \ref{secg},
whenever the coefficients are whether
continuous or only measurable and bounded
(inspired in some techniques from \cite{gw,lsw,kp}).

\section{Statement of the problem}

Let $\Omega$ be a  domain (that is, connected open set) in 
$\mathbb{R}^n$ ($n\geq 2$) of class $C^{0,1}$, and bounded.
Its boundary $\partial\Omega$ is constituted by two disjoint
 open $(n-1)$-dimensional sets,
$\Gamma_D$ and $\Gamma$,
such that $\partial\Omega=\bar\Gamma_D\cup \bar\Gamma$. 
The Dirichlet situation $\Gamma_D=\partial\Omega$
(or equivalently $\Gamma=\emptyset$), and the Neumann
 situation $\Gamma=\partial\Omega$
(or equivalently $\Gamma_D=\emptyset$) are available.

Let us consider the following boundary value problem, in the sense of distributions,
\begin{eqnarray}
-\nabla\cdot(a\nabla u)=f-\nabla\cdot{\bf f}&\mbox{ in }&\Omega;\label{omega}\\
(a\nabla u-{\bf f})\cdot{\bf n}=h&\mbox{ on }&\Gamma;\\
u=g&\mbox{ on }&\Gamma_D, \label{gama}
\end{eqnarray}
where $\bf n$ is the unit outward normal to the boundary $\partial\Omega$.

Set for any $q\geq 1$
\[V_q=\left\{
\begin{array}{ll}
W^{1,q}_{\Gamma_D}(\Omega)=\{v\in W^{1,q}(\Omega):\ v=0\mbox{ on }\Gamma_D\}
&\mbox{ if }|\Gamma_D|>0\\
\left.\begin{array}{l}
V_q(\partial\Omega)=
\{v\in W^{1,q}(\Omega):\ \int_{\partial\Omega} v(x) \mathrm{ds}=0\}\\
V_q(\Omega)=
\{v\in W^{1,q}(\Omega):\ \int_{\Omega} v(x) \mathrm{dx}=0\}
\end{array}\right\}
&\mbox{ otherwise}
\end{array}\right.
\]
the Banach space endowed with the seminorm of $W^{1,q}(\Omega)$,
 taking the Poincar\'e inequalities (\ref{poin1})-(\ref{poin2}) into account,
 since 
any bounded Lipschitz domain has the cone property.
Here $|\cdot|$ stands for the $(n-1)$-Lebesgue measure.
Also $|A|$ stands for the Lebesgue measure of a set $A$ of $\mathbb{R}^n$.
The significance of $|\cdot|$ depends on the kind of the set.

 Defining the $W^{1,q}$-norm by
 \[ 
\| v\|_{1,q,\Omega}:={1\over C_*+1}(\|v\|_{q,\Omega}+
\|\nabla  v\|_{q,\Omega}),
\]
with $C_*$ being anyone of the  Poincar\'e constants
\begin{eqnarray}
\|v- -\hspace*{-0.4cm}\int_{\Sigma }
v\mathrm{ds}\|_{q,\Omega}\leq C_*
\|\nabla  v\|_{q,\Omega},&&\forall v\in W^{1,q}(\Omega);\label{poin1}\\
\|v- -\hspace*{-0.4cm}\int_{\Omega }
v\mathrm{dx}\|_{q,\Omega}\leq C_*
\|\nabla  v\|_{q,\Omega},&&\forall v\in W^{1,q}(\Omega),\label{poin2}
\end{eqnarray}
where $\Sigma\subset\partial \Omega$, and
$ -\hspace*{-0.35cm}\int_{A}$ means the integral average over the set $A$
of positive measure,
 the Sobolev and trace inequalities read
\begin{eqnarray}\label{sobs}
\| v\|_{q^*,\Omega}\leq S_q
\|\nabla  v\|_{q,\Omega};&&\\
\label{sobk}
\| v\|_{q_*,\partial\Omega}\leq K_q
\|\nabla  v\|_{q,\Omega},&&\quad\forall v\in V_q.
\end{eqnarray}
Hence further we call (\ref{sobs}) the Sobolev inequality,
 and for the general situation  the $W^{1,q}$-Sobolev inequality.
Analogously, the trace inequality may be stated.
For  $1\leq q<n$,  $q^*=qn/(n-q)$  and
 $q_*=q(n-1)/(n-q)$  are the critical Sobolev and trace exponents such that correspond, respectively, to
$ {W}^{1,q}(\Omega)
 \hookrightarrow { L}^{q^*}(\Omega)$ and
  $ W^{1,q}(\Omega)\hookrightarrow L^{q_*}(\partial\Omega)$.
For $1<q<n$,
 the best constants of the Sobolev and trace inequalities are, respectively, (for smooth functions that decay at infinity, see \cite{tale} and
\cite{bond})
\begin{eqnarray*}
S_q&=&\pi^{-1/2}n^{-1/q}\left({q-1\over n-q}\right)^{1-1/q}\left[
{\Gamma(1+n/2)\Gamma(n)\over \Gamma (n/q)\Gamma(1+n-n/q)}\right]^{1/n};
\\ 
K_q&=&\pi^{1-q\over 2}\left({q-1\over n-q}\right)^{q-1}\left[
{\Gamma\left({q(n-1)\over 2(q-1)}\right)\Big/ \Gamma \left({
n-1\over 2(q-1)}\right)}\right]^{q-1\over n-1}.
\end{eqnarray*}
We observe that $q^*>1$ is arbitrary if $q=n$. Here $\Gamma$ stands for the
gamma function.
Set by $\omega_n$ the volume of the  unit ball $B_1(0)$ of
$\mathbb{R}^n$, that is, $\omega_n=\pi^{n/2}/\Gamma(n/2+1)$
and $\Gamma(n/2+1)=(n/2)!$ if $n$ is even, and 
$\Gamma(n/2+1)=\pi^{1/2}2^{-(n+1)/2}n(n-2)(n-4)\cdots 1$ if $n$ is odd.
Moreover, the relationship $\sigma_{n-1}=n\omega_n$ holds true,
where $\sigma_{n-1}=2\pi^{n/2}/\Gamma(n/2)$
 denotes the area of the  unit sphere $\partial B_1(0)$.

For $n>1$, from the fundamental theorem of calculus applied to
each of the $n$ variables separately, it follows that
\begin{equation}\label{sobn}
\|u\|_{n/(n-1),\Omega}\leq n^{-1/ 2}\|\nabla u\|_{1,\Omega}.
\end{equation}
We emphasize that the above explicit constant is not sharp,
since there exists the limit constant
$S_1=\pi^{-1/2}n^{-1}[\Gamma(1+n/2)]^{1/n}$ \cite{tale}.

\begin{defn}\label{def1}
We say that $u$ is weak solution to (\ref{omega})-(\ref{gama}), 
if it 
verifies
$u=g $ a.e. on $\Gamma_D$, and
\begin{equation}\label{pbumax}
\int_{\Omega}     a\nabla u\cdot
\nabla v \mathrm{dx}
=\int_{\Omega}{\bf f}\cdot\nabla v \mathrm{dx}
+\int_{\Omega}fv \mathrm{dx}+\int_{\Gamma}hv \mathrm{ds}, \quad\forall v\in V_2,
\end{equation}
where  $g\in L^2(\Gamma_D)$,
 ${\bf f}\in {\bf L}^{2}(\Omega)$,  $f\in L^{t}(\Omega)$, with
 $t=(2^*)'$, i.e. $t=2n/(n+2)$
if $n>2$ and any $t>1$ if $n=2$, $h\in L^{s}(\Gamma)$, with $s=2(n-1)/n$ 
if $n>2$ and any $s>1$ if $n=2$,
and 
 $a\in L^\infty(\Omega)$ satisfies $0< a_\#\leq a\leq a^\#$ a.e. in $\Omega$.
\end{defn}

Since $\Omega$ is bounded, we have that $\Omega\subset B_{\partial(\Omega)}(x)$,
where $\delta(\Omega):={\rm diam}(\Omega)$, for every $x\in\Omega$.
We emphasize that the existence of equivalence between the strong
(\ref{omega})-(\ref{gama}) and weak (\ref{pbumax}) formulations
is only available under sufficiently data.
For instance, the Green formula may be applied if
$a\nabla u \in {\bf L}^2(\Omega)$ and $\nabla\cdot(a\nabla u)\in L^2(\Omega)$.

\section{Some $W^{1,q}$-constants ($q\leq 2$)}
\label{solve}

The presented results in this Section are valid
whether $a$ is  a matrix or a function such that obeys the measurable and boundedness properties.
We emphasize that in the matrix situation
$a\nabla u\cdot\nabla v=a_{ij}\partial_i u\partial_j v$, under the Einstein summation convention.
Here we restrict to the function situation for the sake of simplicity.

\subsection{$H^{1}$-solvability}
\label{hilb}

We recall the existence result in the Hilbert space $H^1$ in order to
express its explicit constants in the following propositions,
namely Propositions \ref{exist} and \ref{neumann} corresponding to
the mixed and the Neumann problems, respectively.
\begin{proposition}\label{exist}
If $|\Gamma_D|>0$, then there exists $u \in H^{1}(\Omega)$ 
being a weak solution to (\ref{omega})-(\ref{gama}).
If $g=0$, then $u$ is unique.
Letting  $\widetilde g\in H^1(\Omega)$ as an  extension of
$g\in L^2(\Gamma_D)$, i.e. it is
such that $\widetilde g=g$ a.e. on $\Gamma_D$,
 the following estimate holds
\begin{eqnarray}\label{dircota}
\|\nabla u\|_{2,\Omega}&\leq& (a^\#/a_\#+1)\|
\nabla\widetilde g\|_{2,\Omega}+\\
&+&{1\over a_\#}\left(
\|{\bf f}\|_{2,\Omega}+C_{n}(\|f\|_{t,\Omega},
 \|h\|_{s,\Gamma})
\right),\nonumber
\end{eqnarray}
where $C_n(A,B)=S_2A+K_2B$
if $n>2$,  $C_2(A,B)= |\Omega |^{1/t'} S_{2t/(3t-2)} A+ |\Omega |^{1/(2s')}
K_{2s/(2s-1)}B$ if $t<2$, and 
 $C_2(A,B)= |\Omega |^{1/t'} A/\sqrt{2}+ |\Omega |^{1/(2s')}\\
K_{2s/(2s-1)}B$ if $t\geq 2$. In particular, 
 $u-\widetilde g\in H^1_{\Gamma_D}(\Omega)$ is unique.
\end{proposition}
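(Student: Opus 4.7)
The plan is to reduce the mixed problem to one with homogeneous Dirichlet data and apply Lax--Milgram, then extract the estimate by testing with the solution itself. Let $\widetilde g \in H^1(\Omega)$ extend $g$, and set $w=u-\widetilde g$. The variational problem (\ref{pbumax}) is then equivalent to: find $w\in V_2=H^1_{\Gamma_D}(\Omega)$ satisfying
\[
\int_\Omega a\nabla w\cdot\nabla v\,\mathrm{dx}=\int_\Omega({\bf f}-a\nabla\widetilde g)\cdot\nabla v\,\mathrm{dx}+\int_\Omega fv\,\mathrm{dx}+\int_\Gamma hv\,\mathrm{ds},\qquad\forall v\in V_2 .
\]
Because $|\Gamma_D|>0$, the Poincar\'e inequality (\ref{poin1}) holds on $V_2$, so $\|\nabla\cdot\|_{2,\Omega}$ is a Hilbert norm on $V_2$. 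The bilinear form $B(w,v)=\int_\Omega a\nabla w\cdot\nabla v$ is continuous with constant $a^\#$ and coercive with constant $a_\#$ thanks to the two-sided bound on $a$. Continuity of the right-hand side in $v\in V_2$ is verified by Cauchy--Schwarz on the $\Omega$-gradient terms and by the Sobolev/trace embeddings (\ref{sobs})--(\ref{sobk}) on the $f$ and $h$ terms. Lax--Milgram then yields a unique $w\in V_2$, hence existence of $u=w+\widetilde g$, and uniqueness when $g=0$.

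For the quantitative estimate I would test the weak formulation with $v=w$. Coercivity gives
\[
a_\#\|\nabla w\|_{2,\Omega}^2\leq a^\#\|\nabla\widetilde g\|_{2,\Omega}\|\nabla w\|_{2,\Omega}+\|{\bf f}\|_{2,\Omega}\|\nabla w\|_{2,\Omega}+\Big|\int_\Omega fw\,\mathrm{dx}\Big|+\Big|\int_\Gamma hw\,\mathrm{ds}\Big|.
\]
For the volume term, H\"older gives $|\int_\Omega fw|\leq\|f\|_{t,\Omega}\|w\|_{t',\Omega}$, and analogously $|\int_\Gamma hw|\leq\|h\|_{s,\Gamma}\|w\|_{s',\Gamma}$. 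When $n>2$ one has $t'=2^*$ and $s'=2_*$, so (\ref{sobs})--(\ref{sobk}) give directly $\|w\|_{t',\Omega}\leq S_2\|\nabla w\|_{2,\Omega}$ and $\|w\|_{s',\Gamma}\leq K_2\|\nabla w\|_{2,\Omega}$; dividing by $\|\nabla w\|_{2,\Omega}$ produces the claimed estimate for $\|\nabla w\|_{2,\Omega}$, and the triangle inequality $\|\nabla u\|_{2,\Omega}\leq \|\nabla w\|_{2,\Omega}+\|\nabla \widetilde g\|_{2,\Omega}$ yields (\ref{dircota}).

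The main technical point is the case $n=2$, where $2^*=\infty$ is not available and the Sobolev embedding must be dualised through an auxiliary exponent. For $t<2$ I would choose $q=2t/(3t-2)<2$ so that $q^*=t'$, apply (\ref{sobs}) at exponent $q$, and pay a factor $|\Omega|^{1/q-1/2}=|\Omega|^{1/t'}$ coming from H\"older on a bounded domain; this yields the constant $|\Omega|^{1/t'}S_{2t/(3t-2)}$. For $t\geq2$, i.e.\ $t'\leq2$, I would instead use H\"older to pass from $L^{t'}$ to $L^2$ and estimate $\|w\|_{2,\Omega}$ via (\ref{sobn}) combined with $\|\nabla w\|_{1,\Omega}\leq|\Omega|^{1/2}\|\nabla w\|_{2,\Omega}$, giving the factor $|\Omega|^{1/t'}/\sqrt2$. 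The boundary term is handled identically with the trace inequality (\ref{sobk}): choosing $q=2s/(2s-1)$ so that $q_*=s'$ produces the factor $|\Omega|^{1/(2s')}K_{2s/(2s-1)}$.

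The only genuine obstacle is bookkeeping: the explicit constants must be collected carefully so that the right-hand side of (\ref{dircota}) is assembled in the stated form $(a^\#/a_\#+1)\|\nabla\widetilde g\|_{2,\Omega}+a_\#^{-1}(\|{\bf f}\|_{2,\Omega}+C_n(\|f\|_{t,\Omega},\|h\|_{s,\Gamma}))$, which amounts to absorbing the coefficient of $\|\nabla w\|_{2,\Omega}$ on the right after dividing. Uniqueness of $u-\widetilde g\in H^1_{\Gamma_D}(\Omega)$ is immediate from Lax--Milgram applied to the homogeneous version of the problem.
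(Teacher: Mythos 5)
Your argument follows the paper's own proof essentially step for step: same lift $u=w+\widetilde g$, same Lax--Milgram setup on $H^1_{\Gamma_D}(\Omega)$, same test with $v=w$, and the same case analysis for $n>2$ versus $n=2$ (with $t<2$ handled via $q=2t/(3t-2)$ and $t\geq 2$ via (\ref{sobn}), the boundary term via $q=2s/(2s-1)$), producing identical constants. Correct, and no material difference from the paper.
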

\begin{proof}
For $g\in L^2(\Gamma_D)$ there exists an extension $\widetilde g\in H^1(\Omega)$
such that $\widetilde g=g$ a.e. on $\Gamma_D$.
The existence and uniqueness of a weak solution
$w\in H^1_{\Gamma_D}(\Omega)$ is well-known via the Lax-Milgram Lemma,
to the variational problem
\begin{equation}\label{pbw}
\int_{\Omega}     a\nabla w\cdot
\nabla v \mathrm{dx}=\int_{\Omega}({\bf f}-
a\nabla\widetilde g)\cdot\nabla v \mathrm{dx}
+\int_{\Omega}fv \mathrm{dx}+\int_{\Gamma}hv \mathrm{ds}, 
\end{equation}
for all $v\in  H^1_{\Gamma_D}(\Omega)$. 
 Therefore, the required solution is given by $u=w+\widetilde g$.

If $g=0$, $\widetilde g=0$ and then $u\equiv w$.

 Taking $v=w\in H^1_{\Gamma_D}(\Omega)$ as a test function in (\ref{pbw}),
 applying the H\"older inequality, and using the lower and upper bounds of $a$,
 we obtain
 \[
 a_\#\|\nabla w\|_{2,\Omega}^2\leq \left(\|{\bf f}\|_{2,\Omega}
+a^\#\|\nabla\widetilde g\|_{2,\Omega}\right)\|\nabla w\|_{2,\Omega}
 +\|f\|_{t,\Omega}\|w\|_{t',\Omega}+
\|h\|_{s,\Gamma}\|w\|_{s',\Gamma}.
\]

For $n>2$,  this inequality reads
 \[
 a_\#\|\nabla w\|_{2,\Omega}\leq \|{\bf f}\|_{2,\Omega}
+a^\#\|\nabla\widetilde g\|_{2,\Omega} +S_2\|f\|_{2n/(2+n),\Omega}+
K_2\|h\|_{2(n-1)/n,\Gamma},
\] 
implying (\ref{dircota}).

Consider the case of dimension $n=2$.
For $t,s>1$, using the H\"older inequality in (\ref{sobn})
if $t'\leq 2$, in (\ref{sobs}) if $t'>2$, and in (\ref{sobk}) for any $s>1$,
we have
\begin{eqnarray*}
\|w\|_{t',\Omega}\leq|\Omega |^{1/t'-1/2}\|w\|_{2,\Omega}\leq
 {1\over \sqrt{2}}|\Omega |^{1/t'-1/2}\|\nabla w\|_{1,\Omega} 
\leq {1\over \sqrt{2}}|\Omega |^{1/t'}\|\nabla w\|_{2,\Omega},
\quad (t\geq 2);\\
\|w\|_{t',\Omega}\leq S_{2t/(3t-2)}\|\nabla w\|_{2t/(3t-2),\Omega} \leq
  S_{2t/(3t-2)}|\Omega |^{1/t'}\|\nabla w\|_{2,\Omega} ,\quad
(t<2);\\
\|w\|_{s',\Gamma}\leq K_{2s/(2s-1)}\|\nabla w\|_{2s/(2s-1),\Omega} \leq
  K_{2s/(2s-1)}|\Omega |^{1/(2s')}\|\nabla w\|_{2,\Omega} .\end{eqnarray*}
This concludes the proof of Proposition \ref{exist}.
\end{proof}

\begin{proposition}[Neumann]\label{neumann}
If $|\Gamma_D|=0$,  then there exists a unique $u \in V_2$
being a weak solution to (\ref{omega})-(\ref{gama}). Moreover,
 the following estimate holds
\begin{equation}\label{neumcota}
\|\nabla u\|_{2,\Omega}\leq{1\over a_\#}\left(
\|{\bf f}\|_{2,\Omega}+C_{n}(\|f\|_{t,\Omega},
 \|h\|_{s,\Gamma})
\right),
\end{equation}
where $C_n(A,B)$ is given as in Proposition \ref{exist}.
\end{proposition}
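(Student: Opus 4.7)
The plan is to imitate the proof of Proposition \ref{exist} directly on the space $V_2$ in place of $H^1_{\Gamma_D}(\Omega)$, simply deleting the Dirichlet-lifting step, which is vacuous here since $|\Gamma_D|=0$. In particular no extension $\widetilde g$ appears, which already explains structurally why (\ref{neumcota}) is obtained from (\ref{dircota}) by formally setting $\widetilde g=0$.

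For existence and uniqueness, I would invoke the Lax--Milgram Lemma on $V_2$ applied to the bilinear form $B(u,v)=\int_\Omega a\nabla u\cdot\nabla v\,\mathrm{dx}$ and the linear functional $L(v)=\int_\Omega{\bf f}\cdot\nabla v\,\mathrm{dx}+\int_\Omega fv\,\mathrm{dx}+\int_\Gamma hv\,\mathrm{ds}$. Continuity of $B$ is immediate from $a\in L^\infty(\Omega)$ with $a\leq a^\#$. Coercivity reduces to the fact that $v\mapsto\|\nabla v\|_{2,\Omega}$ is a norm on $V_2$, which is exactly what the Poincar\'e inequalities (\ref{poin1})--(\ref{poin2}) provide, since $V_2$ is defined by a zero-mean constraint on either $\partial\Omega$ or $\Omega$. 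Continuity of $L$ on $V_2$ follows from H\"older's inequality combined with the Sobolev embedding (\ref{sobs}) (or (\ref{sobn}) together with an interpolation step when $n=2$) and the trace embedding (\ref{sobk}), using the integrability assumptions $f\in L^t(\Omega)$, $h\in L^s(\Gamma)$ and ${\bf f}\in{\bf L}^2(\Omega)$ from Definition \ref{def1}.

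For the quantitative estimate, I would test (\ref{pbumax}) with $v=u\in V_2$. Applying the ellipticity lower bound $a_\#|\nabla u|^2\leq a\nabla u\cdot\nabla u$ on the left-hand side and H\"older's inequality on the right-hand side, the chain of estimates in the proof of Proposition \ref{exist} transfers word for word with $\widetilde g=0$: for $n>2$ one uses (\ref{sobs}) and (\ref{sobk}) with $q=2$ to absorb $\|u\|_{t',\Omega}$ and $\|u\|_{s',\Gamma}$; for $n=2$ one splits into the cases $t<2$ and $t\geq 2$, handling the volume term via (\ref{sobn}) or (\ref{sobs}) with an intermediate exponent, and the boundary term via (\ref{sobk}) at exponent $2s/(2s-1)$, each step preceded by a H\"older inequality to convert the Lebesgue exponent on $\Omega$ into $|\Omega|^{1/t'}$ or $|\Omega|^{1/(2s')}$ factors. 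This produces precisely $C_n(\|f\|_{t,\Omega},\|h\|_{s,\Gamma})$ as defined in Proposition \ref{exist}, yielding (\ref{neumcota}). I anticipate no real obstacle: the single conceptual shift from the mixed case is that coercivity and the Poincar\'e step now rest on the zero-mean condition built into $V_2$ rather than on a positive-measure trace vanishing set, and both versions are already recorded in (\ref{poin1})--(\ref{poin2}).
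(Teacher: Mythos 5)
Your proposal is correct and follows the same route as the paper: Lax--Milgram on $V_2$ (with coercivity resting on the Poincar\'e inequalities built into the zero-mean constraint) for existence and uniqueness, and then the identical test-function and H\"older/Sobolev chain from Proposition \ref{exist} with the $\widetilde g$ term simply absent. You have merely filled in the details that the paper compresses into two sentences.
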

\begin{proof}
The existence and uniqueness of a weak solution
$u\in V_2$ is consequence of the Lax-Milgram Lemma
(see Remark \ref{rneum}).
The estimate (\ref{neumcota}) follows the same argument used to prove
(\ref{dircota}).
\end{proof}

\begin{remark}\label{rneum}
The meaning of the Neumann solution $u\in V_2$
in Proposition \ref{neumann} should be understood as  $u\in V_2(\partial\Omega)$
solving (\ref{pbumax}) for all   $v\in V_2(\partial\Omega)$, or
 $u\in V_2(\Omega)$
solving (\ref{pbumax}) for all   $v\in V_2(\Omega)$.
\end{remark}

\subsection{$W^{1,q}$-solvability ($q\leq n/(n-1)$)}
\label{l1data}

The existence of a  solution is recalled in the following
proposition in
accordance to $L^1$-theory, that is 
 via solutions obtained by limit approximation (SOLA)  (cf. \cite{bg,aa,lap,p}),
 in  order to determine the explicit constants.
\begin{proposition}\label{W1q}
Let  $g=0$  on $\Gamma_D$ (possibly empty),
${\bf f}\in{\bf L}^2(\Omega)$, $f\in L^1(\Omega)$,
$h\in L^1(\Gamma)$, and $a\in L^\infty(\Omega)$
satisfy $0< a_\#\leq a\leq a^\#$ a.e. in $\Omega$.
For any $1\leq q < n/(n-1)$ there exists $u\in
  V_{q}$ solving (\ref{pbumax}) for every $v\in V_{q'}$.
Moreover, we have the following estimate
\begin{eqnarray}\label{cota1qv}
\| \nabla u \|_{q,\Omega}& \leq&
C_1(\Omega,n,q) \left(  {\|{\bf f}\|_{2,\Omega}\over 
a_\#}+ \sqrt{\varkappa (\|f\|_{1,\Omega}+\|h\|_{1,\Gamma})\over 
a_\#}\right)\\ &&
+C_2 \left( n,q, {\|{\bf f}\|_{2,\Omega}\over 
a_\#}+\sqrt{\varkappa(\|f\|_{1,\Omega}+\|h\|_{1,\Gamma})\over 
a_\#}
\right),\nonumber
\end{eqnarray}
with $\varkappa=2$  if $|\Gamma_D|>0$, $\varkappa=4$  if $|\Gamma_D|=0$, and
\begin{eqnarray*}
C_1(\Omega,n,q)= |\Omega|^{1/q-1/2}\left\{
\begin{array}{ll}
{(n-q)^{3/2}\over q(n-2)(n+q-nq)^{1/2}}
2^{\frac {1}{q}+\frac {3n-q(n+1)}{2(n-q)}}&\mbox{ if }n>2\\
(2-q)^{-1/2}2^{\frac{(6-q)q-2}{2q}}&\mbox{ if }n=2
\end{array}\right.\\
C_2(n,q,A)=\left\{
\begin{array}{ll}
A^{\frac {2(n-q)}{q(n-2)}}\left({n-q\over n+q-nq}\right)^{\frac {n-q}{q(n-2)}}
2^{\frac {(2-q)(nq-n+q)}{q^2(n-2)}}
S_q^{\frac {n(2-q)}{q(n-2)}}  &\mbox{ if }n>2\\
  A^{\frac {2}{q-1}}
|\Omega|^{1/q-1/2} {\left[(q-1)(3-q)^{\frac {3-q}{q-1}}
\right]^{\frac{2-q}{2q}}\over (2-q)^{1/(q-1)}} 2^\ell
S_q^{\frac {3-q}{q-1}}&\mbox{ if }n=2
\end{array}\right.
\end{eqnarray*}
where $\ell\in ]2,+\infty[$ is explicitly given in (\ref{ell}).
\end{proposition}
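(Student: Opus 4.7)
The plan is to use the solutions-obtained-by-limit-approximation (SOLA) scheme: smooth the $L^1$ data and apply Propositions \ref{exist}, \ref{neumann} to obtain $H^1$-solutions $u_k$; derive a uniform $W^{1,q}$ bound via the Boccardo--Gallou\"et truncation combined with a Stampacchia level-set argument, tracking the explicit constants; and pass to the limit. Since the equation is linear in $u$, no monotonicity/Minty trick is needed at the limit. Concretely, I choose $f_k\in L^t(\Omega)$ and $h_k\in L^s(\Gamma)$ with $\|f_k\|_{1,\Omega}\le\|f\|_{1,\Omega}$, $\|h_k\|_{1,\Gamma}\le\|h\|_{1,\Gamma}$ and $L^1$-convergence (standard truncation plus mollification), and let $u_k\in H^1(\Omega)$ denote the associated solution.

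\textbf{Uniform a priori bound.} Testing the weak formulation for $u_k$ with the truncation $T_j(u_k):=\mathrm{sign}(u_k)\min(|u_k|,j)$ (minus its mean, in the Neumann case), and using the ellipticity of $a$, Cauchy--Schwarz on the ${\bf f}$-term, Young's inequality, and $|T_j(u_k)|\le j$, I would obtain
\[
\|\nabla T_j(u_k)\|_{2,\Omega}\le B(j):=\frac{\|{\bf f}\|_{2,\Omega}}{a_\#}+\sqrt{\frac{\varkappa\,j\,(\|f\|_{1,\Omega}+\|h\|_{1,\Gamma})}{a_\#}},
\]
uniformly in $k$, with $\varkappa=2$ if $|\Gamma_D|>0$ and $\varkappa=4$ otherwise (the extra factor of~$2$ comes from subtracting the mean on the test function). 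Applying H\"older (to pass from $L^2$ to $L^q$ on $\Omega$) followed by the Sobolev inequality (\ref{sobs}) to $T_j(u_k)$ then gives
\[
j\,|\{|u_k|>j\}|^{1/q^{*}}\le\|T_j(u_k)\|_{q^{*},\Omega}\le S_q\,|\Omega|^{1/q-1/2}\,B(j),
\]
and splitting $\{|\nabla u_k|>\lambda\}\subset(\{|u_k|\le j\}\cap\{|\nabla u_k|>\lambda\})\cup\{|u_k|>j\}$ and applying Chebyshev's inequality to the first piece yields
\[
|\{|\nabla u_k|>\lambda\}|\le \frac{B(j)^{2}}{\lambda^{2}}+\left(\frac{S_q\,|\Omega|^{1/q-1/2}\,B(j)}{j}\right)^{q^{*}},\qquad\lambda>0.
\]
Balancing the two contributions by a suitable $j=j(\lambda)$ and integrating $\|\nabla u_k\|_{q,\Omega}^{q}=q\int_0^{\infty}\lambda^{q-1}|\{|\nabla u_k|>\lambda\}|\,d\lambda$ produces (\ref{cota1qv}), with all constants uniform in $k$; the closed-form exponents in $C_1(\Omega,n,q)$ and $C_2(n,q,\cdot)$ emerge from this optimization together with the algebraic structure of $B(j)^{2}$. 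When $n=2$ the Sobolev embedding $H^{1}\hookrightarrow L^{2^{*}}$ is not available, so $q^{*}$ is replaced by a free large exponent (surfacing as the parameter $\ell$ in (\ref{ell})), producing the alternative second lines in the definitions of $C_1,C_2$.

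\textbf{Passage to the limit.} The uniform $V_q$-bound gives, up to a subsequence, $u_k\rightharpoonup u$ weakly in $V_q$ and, by Rellich--Kondrachov and the compact trace theorem, strongly in $L^q(\Omega)\cap L^q(\Gamma)$. Since $q'>n$, Morrey's theorem gives $V_{q'}\hookrightarrow C(\overline\Omega)$, so for every $v\in V_{q'}$ one has $a\nabla v\in{\bf L}^{q'}(\Omega)$ and $v,\,v_{|\partial\Omega}$ uniformly bounded. All four terms of (\ref{pbumax}) then pass to the limit (weak-strong duality for the principal part, $L^1$--$L^\infty$ pairing for the data terms), and (\ref{cota1qv}) survives by lower semicontinuity of the norm. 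The main obstacle of the proof is not analytic but computational: carrying out the optimization of the previous step while tracking the Sobolev constant $S_q$, the H\"older factor $|\Omega|^{1/q-1/2}$, the parameter $\varkappa$, and the two-term structure of $B(j)$ so that they combine into the stated closed-form expressions for $C_1(\Omega,n,q)$ and $C_2(n,q,\cdot)$.
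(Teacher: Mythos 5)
Your overall strategy (smooth the $L^1$ data, solve in $H^1$ via Propositions \ref{exist}/\ref{neumann}, obtain a uniform $W^{1,q}$ bound by a Boccardo--Gallou\"et truncation argument, pass to the weak limit using $V_{q'}\hookrightarrow C(\overline\Omega)$) is the right SOLA framework, and your treatment of the limit passage and of the factor $\varkappa$ is fine. However, the paper proves the a priori bound differently: it tests (\ref{pbum}) with the \emph{bounded nonlinear} function $\mathrm{sign}(u_m)\bigl[1-(1+|u_m|)^{-s}\bigr]$, obtains the weighted energy estimate $\int_\Omega |\nabla u_m|^2/(1+|u_m|)^{s+1}\,\mathrm{dx}\le M(s)^2$, and then extracts $\|\nabla u_m\|_{q,\Omega}$ by a single H\"older interpolation (\ref{hold}) with the weight, choosing $s$ so that $(s+1)q/(2-q)=q^*$ when $n>2$ and $<q^*$ when $n=2$, and finishing with one Young inequality. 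No level-set integration in $\lambda$ is performed anywhere. The two approaches are close cousins, but the particular closed-form expressions for $C_1(\Omega,n,q)$ and $C_2(n,q,\cdot)$ in the statement are an algebraic by-product of the weighted-H\"older-plus-Young route and there is no reason to expect a level-set/Marcinkiewicz argument to reproduce the same numbers; your last paragraph acknowledges but does not close this gap.

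More seriously, your level-set estimate as written does not yield convergence of $\int_0^\infty\lambda^{q-1}|\{|\nabla u_k|>\lambda\}|\,\mathrm{d}\lambda$ when $n>2$. You first downgrade $\|\nabla T_j(u_k)\|_{2,\Omega}$ to $\|\nabla T_j(u_k)\|_{q,\Omega}$ by H\"older and then apply (\ref{sobs}) with exponent $q^*=nq/(n-q)$, giving
\[
|\{|u_k|>j\}|\le\Bigl(\tfrac{S_q|\Omega|^{1/q-1/2}B(j)}{j}\Bigr)^{q^*},\qquad
|\{|\nabla u_k|>\lambda\}|\le\frac{B(j)^2}{\lambda^2}+\Bigl(\tfrac{S_q|\Omega|^{1/q-1/2}B(j)}{j}\Bigr)^{q^*}.
\]
For large $j$ one has $B(j)\sim\sqrt{j}$, so the second term behaves like $j^{-q^*/2}$; balancing with $j/\lambda^2$ gives $j\sim\lambda^{4/(2+q^*)}$ and hence $|\{|\nabla u_k|>\lambda\}|\lesssim\lambda^{-2q^*/(2+q^*)}$. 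But with $q^*=nq/(n-q)$ one computes $2q^*/(2+q^*)=2nq/(2n-2q+nq)\le q$ whenever $n\ge 2$ (strictly less when $n>2$), so $\lambda^{q-1}|\{|\nabla u_k|>\lambda\}|$ is not integrable at $\lambda=\infty$. The resolution is to keep the full $L^2$ information on $\nabla T_j$ and use the critical embedding $\|T_j(u_k)\|_{2^*,\Omega}\le S_2\|\nabla T_j(u_k)\|_{2,\Omega}$ with $2^*=2n/(n-2)$, which yields the decay $|\{|\nabla u_k|>\lambda\}|\lesssim\lambda^{-n/(n-1)}$; since $q<n/(n-1)$, the $\lambda$-integral then converges. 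This is precisely the exponent the paper preserves by imposing $(s+1)q/(2-q)=q^*$ inside the weighted H\"older step, so the two routes agree on the admissible range of $q$ but not, without a separate computation, on the constants; and your $n=2$ remark (attributing $\ell$ to ``a free large Sobolev exponent'') also does not reflect the actual mechanism in the paper, which comes from choosing $s=2-q$ and absorbing via a Young inequality with a tuned $\epsilon$.
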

\begin{proof}
For each $m\in \mathbb N$, take
\[
f_m={m f\over m+|f|}\in L^\infty(\Omega),\quad
h_m={m h\over m+|h|}\in L^\infty(\Gamma).
\]
Applying Propositions \ref{exist} and \ref{neumann},
there exists a unique solution $u_m\in V_2
$ to the following variational problem
\begin{equation}\label{pbum}
\int_\Omega a\nabla u_m\cdot\nabla v\mathrm{dx}
=\int_\Omega
{\bf f}\cdot\nabla v\mathrm{dx}+
\int_\Omega f_mv\mathrm{dx}+\int_\Gamma
h_mv\mathrm{ds},\quad\forall v\in V_2.
\end{equation}
In particular, (\ref{pbum}) holds for all $v\in V_{q'}$ ($q'>n$).

In order to pass to the limit (\ref{pbum}) on $m$ $(m\rightarrow\infty)$
let us establish the estimate (\ref{cota1qv}) for
 $ \nabla u_m$.
 
{\sc Case} $|\Gamma_D|>0$.
From $L^1$-data theory (see, for instance, \cite{p}),
let us choose 
\[ 
v={{\rm sign}(u_m)}[1-1/(1+|u_m|)^s ]\in
W^{1,2}_{\Gamma_D}(\Omega)\cap L^\infty(\Omega),\quad
\mbox{for }s  >0,
\]
as a test function in (\ref{pbum}). Hence it follows that
\[
a_\#\int_\Omega{s |\nabla u_m|^2\over
(1+| u_m|)^{s +1}}\mathrm{dx}
\leq s\|{\bf f}\|_{2,\Omega}\|{\nabla u_m\over
(1+| u_m|)^{s +1\over 2}}\|_{2,\Omega}+\|f\|_{1,\Omega}+\|h\|_{1,\Gamma},
\]
and consequently
\[
\int_\Omega{ |\nabla u_m|^2\over
(1+| u_m|)^{s +1}}\mathrm{dx}
\leq {1\over (a_\#)^2}\|{\bf f}\|_{2,\Omega}^2+
{2\over sa_\#}(\|f\|_{1,\Omega}+\|h\|_{1,\Gamma}).
\]

By the H\"older inequality with exponents $2/q$ and $2/(2-q)>1$, we have
\begin{equation}\label{hold}
 \int_{\Omega} |\nabla u_m |^q \mathrm{dx} \leq
 \left( 
\int_{\Omega} \frac{ |\nabla u_m|^2 } { (1+ |u_m
 |)^{s+1}}  \mathrm{dx}  \right)^{\frac q2}
\left( \int_{\Omega} (1+ |u_m |)^{\frac {(s+1) q}{2-q}}  \mathrm{dx} 
 \right)^{\frac {2-q}2} .
\end{equation}
Set
\[ 
M(s):=
{\|{\bf f}\|_{2,\Omega}\over 
a_\#}+\left({2(\|f\|_{1,\Omega}+\|h\|_{1,\Gamma})\over 
a_\#s}\right)^{1/2}.
\]

Let us choose  $s>0$ such that $(s+1)q/(2-q)=q^*=nq/(n-q)$
which is possible since $1\leq q < n/(n-1)$, that is $s=
(n+q-nq)/(n-q)$.
Then, gathering the above two inequalities, and inserting
(\ref{sobs}) for
$u_m\in V_2\hookrightarrow V_q$ with $(q\leq 2)$, we deduce
\begin{eqnarray*}
\| \nabla u_m \|_{q,\Omega} \leq 
M(s)2^{\frac {2-q}{2q}(\frac {nq}{n-q}-1)}
\left(|\Omega|^{1/q-1/2}
+(S_q
\| \nabla u_m \|_{q,\Omega})^{\frac {n(2-q)}{2(n-q)}}  \right)\\
\leq M({n+q-nq \over n-q})
2^{\frac {2-q}{2q}(\frac {nq}{n-q}-1)} |\Omega|^{1/q-1/2}
+{\frac {n(2-q)}{2(n-q)}}
\| \nabla u_m \|_{q,\Omega}+ \\
+
{\frac {q(n-2)}{2(n-q)}} 
[M({n+q-nq \over n-q})]^{\frac {2(n-q)}{q(n-2)}}
2^{\frac {(2-q)(nq-n+q)}{q^2(n-2)}}
S_q^{\frac {n(2-q)}{q(n-2)}}  ,
\end{eqnarray*}
using the Young inequality $AB\leq \epsilon A^a/a+B^b/(b\epsilon ^{b/a})$,
for $A,B\geq 0$, $\epsilon>0$, and $a,b>1$ such that $1/a+1/b=1$,
 with $\epsilon=1$, and $a=2(n-q)/[n(2-q)]$ if $n>2$.

For $n=2$,  $s>0$ is chosen such that $(s+1)q/(2-q)<q^*=2q/(2-q)$
which is possible since $1\leq q < 2$, that is $s<1$.
Using the above Young inequality with $a=2/(s+1)$, we find   
 \begin{eqnarray*}
(1+|u_m|) ^{(s+1)q/(2-q)}\leq
2^{\frac{(s+1)q}{2-q}-1}(1+|u_m| ^{(s+1)q/(2-q)})\leq\\
\leq 2^{\frac{(s+2)q-2}{2-q}}+{1-s\over 2}
\left({s+1\over 2\epsilon } \right)^{(s+1)/(1-s)}
2^{\frac{2[(s+2)q-2]}{(2-q)(1-s)}}
+\epsilon|u_m|^{q^*}.
\end{eqnarray*}
Let us choose, for instance,  $s=2-q<1$, and
$\epsilon =[2S_qM(s)]^{ -2q/(2-q)} $.
Then, we obtain
\begin{eqnarray*}
\| \nabla u_m \|_{q,\Omega} \leq  M(s)\left(
\epsilon^{(2-q)/(2q)}S_q
\| \nabla u_m \|_{q,\Omega}+\right.\\
 \left. +|\Omega|^{1/q-1/2}
\left[2^{\frac{(s+2)q-2}{2-q}}+ {1-s\over 2}
\left({s+1\over 2\epsilon } \right)^{(s+1)/(1-s)}
2^{\frac{2[(s+2)q-2]}{(2-q)(1-s)}}
\right]^{\frac{2-q}{2q}}
 \right)\\
\leq 
{\frac {1}{2}}  \| \nabla u_m \|_{q,\Omega}+ 
M (2-q) |\Omega|^{1/q-1/2}\times
\left[2^{\frac{(4-q)q-2}{2q}}+\right.\\
\left.+{1\over 2}\left[(q-1)(3-q)^{\frac {3-q}{q-1}}
\right]^{\frac{2-q}{2q}} 
2^\ell [M(2-q)]^{\frac {3-q}{q-1}} 
S_q^{\frac {3-q}{q-1}} \right] ,
\end{eqnarray*}
where $\ell$ is given by
\begin{equation}\label{ell}
\ell=\frac {2[(s+2)q-2]-(2-3q)(s+1)}{2q(1-s)}
=\frac {-5q^2+19q-10}{2q(q-1)}\rightarrow 2 ^+,
\end{equation}
as $q\rightarrow 2^-$. Hence, we find (\ref{cota1qv}) with $\varkappa=2$.

{\sc Case} $|\Gamma_D|=0$.
We choose, for $s >0$,
\begin{eqnarray*}
v=-{{\rm sign}(u_m)\over (1+|u_m|)^s }+-\hspace*{-0.4cm}\int_{\partial\Omega} 
{{\rm sign}(u_m)\over (1+|u_m|)^s }\mathrm{ds}\in
V_2(\partial\Omega);\\
v=-{{\rm sign}(u_m)\over (1+|u_m|)^s }+-\hspace*{-0.4cm}\int_{\Omega} 
{{\rm sign}(u_m)\over (1+|u_m|)^s }\mathrm{dx}\in
V_2(\Omega),
\end{eqnarray*}
as a test function in (\ref{pbum}). 
Since $|v|\leq 2$ a.e. in $\Omega$, it follows that
\[
\int_\Omega{ |\nabla u_m|^2\over (1+| u_m|)^{s +1}}\mathrm{dx}
\leq  {1\over (a_\#)^2}\|{\bf f}\|_{2,\Omega}^2+
{4\over sa_\#}(\|f\|_{1,\Omega}+\|h\|_{1,\Gamma}).
\]
Then, we argue as in the above case, concluding (\ref{cota1qv})
 with $\varkappa=4$.

For both cases, we can extract a subsequence of $u_m,$ still denoted by
$u_m,$ such that it
weakly converges to $u$ in $W^{1,q}(\Omega)$, where $u\in V_q$
 solves the
limit problem (\ref{pbumax}) for all $v\in V_{q'}$.
\end{proof}

\begin{remark}
In terms of Proposition \ref{W1q},
 the terms on the right hand side of  (\ref{pbumax})
 have sense, since $v\in
  W^{1,q'}(\Omega)\hookrightarrow C(\bar\Omega)$ for $q'>n,$
  that is, $q<n/(n-1)$.
\end{remark}
 
\begin{remark}
The existence of a  solution, which is given at  Proposition \ref{W1q},
is in fact unique  for
the class of SOLA solutions (cf. \cite{bg,aa,lap}).
By the uniqueness of solution in the Hilbert space,
this unique SOLA solution is the weak solution of $V_2$,
if the data belong to the convenient $L^2$ Hilbert spaces.
\end{remark}

Finally, we state the following  version of Proposition \ref{W1q},
which will be required in Section  \ref{secg}, with datum belonging to
 the space of all signed
measures with finite total variation  $\mathcal{M}(\Omega)
=\left( {C}_0 (\Omega)\right)'$. 
\begin{proposition}\label{dirac1}
Let  $g=0$  on $\Gamma_D$ (possibly empty),  $a\in L^\infty(\Omega)$
satisfy $0< a_\#\leq a\leq a^\#$ a.e. in $\Omega$, and
for each $x\in\Omega$,
$\delta_x\in \mathcal{M}(\Omega)$
 be the Dirac delta function.
For any $1\leq q < n/(n-1)$ there exists $u\in  V_{q}$ solving 
  \[
\int_{\Omega}     a\nabla u\cdot
\nabla v\mathrm{dx}=
\langle \delta_x, v\rangle_{\mathcal{M} (\Omega)
\times  {C}_0 (\Omega)}, 
 \quad \forall v\in C_0 (\Omega)\cap V_{1},
\]
for every $i=1,\cdots ,n$.
Moreover, we have the following estimate
\begin{equation}
\| \nabla u \|_{q,\Omega} \leq
C_1(\Omega,n,q)\sqrt{\varkappa  /a_\#}
+C_2 ( n,q, \sqrt{\varkappa /a_\#}),\label{cotad}
\end{equation}
where the constants $C_1(\Omega,n,q)$, $C_2 ( n,q, A)$, and $\varkappa$
 are determined in  Proposition \ref{W1q}.
\end{proposition}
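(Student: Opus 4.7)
The plan is to reduce Proposition \ref{dirac1} to the SOLA construction already carried out in Proposition \ref{W1q}, by approximating the Dirac mass at $x$ through a sequence of bounded functions of unit $L^{1}$-mass and passing to the limit. Since all data in the estimate (\ref{cota1qv}) appear only through $\|\mathbf{f}\|_{2,\Omega}$ and the $L^{1}$-norms of the scalar and boundary data, a uniform bound on the approximating solutions will translate verbatim into (\ref{cotad}).

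Concretely, I would first fix $x\in\Omega$ with $d:=\mathrm{dist}(x,\partial\Omega)>0$, pick a nonnegative mollifier $\rho\in C_{c}^{\infty}(B_{1}(0))$ with $\int\rho=1$, and set $f_{m}(y):=m^{n}\rho(m(y-x))$ for $m>1/d$. Then $f_{m}\in C_{c}^{\infty}(\Omega)\subset L^{\infty}(\Omega)$ with $\|f_{m}\|_{1,\Omega}=1$, and for every $\varphi\in C(\bar\Omega)$ one has $\int_{\Omega}f_{m}\varphi\,dy\to \varphi(x)=\langle\delta_{x},\varphi\rangle$ by the standard mollification property. Next, I would apply Proposition \ref{W1q} with $\mathbf{f}=0$, $h=0$, $f=f_{m}$ to obtain $u_{m}\in V_{q}$ solving the corresponding variational problem (\ref{pbumax}) and satisfying (\ref{cota1qv}); since $\|f_{m}\|_{1,\Omega}=1$, the right-hand side of (\ref{cota1qv}) collapses exactly to the right-hand side of (\ref{cotad}), giving a bound on $\|\nabla u_{m}\|_{q,\Omega}$ independent of $m$.

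Finally, I would extract a subsequence with $u_{m}\rightharpoonup u$ weakly in $V_{q}$, so that the bound (\ref{cotad}) transfers to $u$ by lower semicontinuity of the norm. The limit equation is then identified by testing: for any $v\in C_{c}^{\infty}(\Omega)$ (which automatically lies in $V_{q'}$ for every $q'$), the left-hand side converges by weak convergence of $\nabla u_{m}$ against $a\nabla v\in L^{q'}(\Omega)$, while the right-hand side tends to $v(x)=\langle\delta_{x},v\rangle$ by the mollifier property. Extension to arbitrary $v\in C_{0}(\Omega)\cap V_{1}$ is a routine density argument (approximate $v$ in $W^{1,1}$ by smooth compactly supported functions that preserve the zero-mean or vanishing-trace side condition).

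The only mild obstacle is the endpoint $q=1$, where $V_{1}$ fails to be reflexive and weak compactness of $\nabla u_{m}$ is not automatic. This is circumvented by first running the argument at some $q_{0}\in(1,n/(n-1))$, for which Proposition \ref{W1q} still applies, and then recovering the $V_{1}$-bound from the $V_{q_{0}}$-bound via Hölder's inequality; alternatively, one invokes the Dunford--Pettis criterion, noting that the higher integrability provided by any $q_{0}>1$ ensures the equi-integrability of $\{\nabla u_{m}\}$ in $L^{1}(\Omega)$.
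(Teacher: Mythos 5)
Your proof is correct and takes essentially the same route as the paper's: approximate $\delta_x$ by an $L^\infty$-sequence $\{f_m\}$ of unit $L^1$-mass, invoke the SOLA estimate (\ref{cota1qv}) with $\mathbf{f}=0$, $h=0$, $\|f_m\|_{1,\Omega}=1$ to get the uniform $W^{1,q}$-bound, and pass to the weak limit. Your treatment is if anything slightly more careful than the paper's: you spell out the mollifier construction, the identification of the limit equation by density, and the nonreflexive endpoint $q=1$ (via a higher $q_0$ or Dunford--Pettis), none of which the paper addresses explicitly.
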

\begin{proof} 
Since  the Dirac delta function $\delta_x\in \mathcal{M} (\Omega)$  can be
approximated by a sequence $\{f_m\}_{m\in\mathbb N}
\subset L^\infty(\Omega)$ such that
\[
\|f_m\|_{1,\Omega}= 1,\quad \mbox{and}\quad
\lim_{
m\rightarrow \infty}\int_\Omega f_m\varphi\mathrm{dx}=
\langle \delta_x,\varphi\rangle_{\mathcal{M} (\Omega)
\times  {C}_0 (\Omega)},   \ \forall \varphi\in 
{ C}_0 (\Omega),
\]
the identity (\ref{pbum}) holds,   with $f$ replaced by $f_m$, 
${\bf f}=0$ in $\Omega$, and $h=0$ on $\Gamma$,
 for all $v\in V_2$ and in particular for all 
$v\in C_0 (\Omega)\cap V_{1}$.
 Then, we may proceed by using
the  argument already used in the  proof of  Proposition \ref{W1q}, 
 with $\|f\|_{1,\Omega}=1$, and $\|{\bf f}\|_{1,\Omega}=\|h\|_{1,\Gamma}=0$,
 to conclude (\ref{cotad}).
\end{proof}
 
\section{$L^\infty$-constants}
\label{linfty}

 In this Section, we establish some maximum principles, by recourse to
 the De Giorgi technique \cite{stamp63-64},
via the analysis of the decay of the level sets of the solution. We begin by
deriving the explicit estimates in the mixed case $|\Gamma_D|>0$.
\begin{proposition}\label{max}
Let $p>n\geq 2$,  $|\Gamma_D|>0$, and $u \in H^{1}(\Omega)$ 
be any weak solution to (\ref{omega})-(\ref{gama})
in accordance with Definition \ref{def1}.
If  $g\in L^\infty(\Gamma_D)$,
 ${\bf f}\in {\bf L}^{p}(\Omega)$,  $f\in L^{np/(p+n)}(\Omega)$, and  $h\in L^{(n-1)p/n}(\Gamma)$,
 then we have
\begin{eqnarray}\label{supess}
{\rm  ess } \sup_{\Omega}|u|\leq {\rm  ess } \sup_{\Gamma_D}|g|+\\
+{C_n\over a_\#}|\Omega|^{1/n-1/p}\left(
\|{\bf f}\|_{p,\Omega}+S_{p'}\|f\|_{np/(p+n),\Omega}+K_{p'}
 \|h\|_{(n-1)p/n,\Gamma}
\right),\nonumber
\end{eqnarray}
where $C_n=2^{n(p-2)/[2(p-n)]}S_2$
if $n>2$, and $C_2=2^{(3p-2)/[2(p-2)]}$.
\end{proposition}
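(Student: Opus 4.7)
I would follow the classical De~Giorgi--Stampacchia truncation scheme, and the key point is to carry out the bookkeeping in a way that makes the explicit constants $C_n$ emerge unambiguously. Set $k_0 := \mathrm{ess\,sup}_{\Gamma_D}|g|$ and, for $k\ge k_0$, define $A(k):=\{x\in\Omega:u(x)>k\}$. Because $u=g\le k_0\le k$ a.e.\ on $\Gamma_D$, the truncation $(u-k)_+$ belongs to $V_2=H^1_{\Gamma_D}(\Omega)$, so it is an admissible test function in (\ref{pbumax}). Plugging it in and using $a\ge a_\#$ and $\nabla(u-k)_+=\chi_{A(k)}\nabla u$ yields
\[
 a_\#\|\nabla(u-k)_+\|_{2,\Omega}^2 \le \int_\Omega \mathbf f\cdot\nabla(u-k)_+\,\mathrm{dx}+\int_\Omega f(u-k)_+\,\mathrm{dx}+\int_\Gamma h(u-k)_+\,\mathrm{ds}.
\]

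\textbf{Energy estimate.} The plan is to factor $|A(k)|^{1/2-1/p}$ out of each RHS term. For the $\mathbf f$-term, H\"older with exponents $(p,p')$ together with $\|\nabla(u-k)_+\|_{p',A(k)}\le|A(k)|^{1/2-1/p}\|\nabla(u-k)_+\|_{2}$ does it directly (this uses $p>n\ge 2$, so $p'<2$). For the $f$-term I use H\"older with $\bigl(np/(n+p),\,np/(np-n-p)\bigr)$, recognize $np/(np-n-p)=(p')^*$, apply the Sobolev inequality (\ref{sobs}) with exponent $p'$, and then bring in the same H\"older factor on the gradient; this produces $S_{p'}\|f\|_{np/(n+p)}|A(k)|^{1/2-1/p}\|\nabla(u-k)_+\|_2$. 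The $h$-term is treated identically via the trace inequality (\ref{sobk}) after noting $(n-1)p/[(n-1)p-n]=(p')_*$. Dividing through by $\|\nabla(u-k)_+\|_2$ gives
\[
 \|\nabla(u-k)_+\|_{2,\Omega}\le \frac{F}{a_\#}|A(k)|^{1/2-1/p},\qquad F:=\|\mathbf f\|_{p,\Omega}+S_{p'}\|f\|_{np/(n+p),\Omega}+K_{p'}\|h\|_{(n-1)p/n,\Gamma}.
\]

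\textbf{Bootstrap.} For $h>k\ge k_0$ I use $(h-k)\chi_{A(h)}\le(u-k)_+$ and embed $(u-k)_+$ into a suitable $L^q$: when $n>2$ take the Sobolev exponent $q=2^*$ through (\ref{sobs}) with $S_2$, producing $|A(h)|\le(S_2F/(a_\#(h-k)))^{2^*}|A(k)|^\beta$ with $\beta=2^*(1/2-1/p)=n(p-2)/[p(n-2)]>1$; when $n=2$ I replace (\ref{sobs}) by the sharper-looking embedding (\ref{sobn}), namely $\|(u-k)_+\|_2\le 2^{-1/2}\|\nabla(u-k)_+\|_1\le 2^{-1/2}|A(k)|^{1/2}\|\nabla(u-k)_+\|_2$, which avoids introducing any $S_q$ factor and gives $|A(h)|\le F^2/(2a_\#^2(h-k)^2)\,|A(k)|^{2-2/p}$ with $\beta=2-2/p>1$. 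In both regimes the classical Stampacchia lemma (a non-increasing $\varphi$ satisfying $\varphi(h)\le M(h-k)^{-\alpha}\varphi(k)^\beta$ with $\beta>1$ vanishes beyond $k_0+d$, $d=M^{1/\alpha}\varphi(k_0)^{(\beta-1)/\alpha}2^{\beta/(\beta-1)}$) applied with $\varphi(k)=|A(k)|\le|\Omega|$ gives the bound on $\mathrm{ess\,sup}_\Omega(u-k_0)$. A direct calculation shows $(\beta-1)/\alpha=1/n-1/p$ in both cases, so the factor $|\Omega|^{1/n-1/p}$ is common; and $\beta/(\beta-1)$ evaluates to $n(p-2)/[2(p-n)]$ for $n>2$ and to $2(p-1)/(p-2)$ for $n=2$, whence $2^{\beta/(\beta-1)}/\sqrt{2}=2^{(3p-2)/[2(p-2)]}$---precisely the stated $C_n$.

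\textbf{Lower bound and main obstacle.} Applying the identical argument to $-u$ (which solves the same problem with data $-g,-\mathbf f,-f,-h$) yields the symmetric bound for $\mathrm{ess\,sup}_\Omega(-u)$, and combining gives (\ref{supess}). The only delicate point is the unified bookkeeping: one must pick the H\"older/Sobolev exponents so that every term contributes the same power $|A(k)|^{1/2-1/p}$ (forcing the use of $S_{p'}$ and $K_{p'}$ rather than $S_2,K_2$ in the energy estimate), and one must notice that in the bootstrap step the right tool for $n=2$ is (\ref{sobn}) rather than an $L^{2^*}$-embedding---this is what makes the $S_q$ factor disappear from $C_2$ and collapses the exponent to the clean form $(3p-2)/[2(p-2)]$.
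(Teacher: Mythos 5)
Your proof is correct and follows essentially the same De Giorgi--Stampacchia argument as the paper: identical energy step with exponents $p',(p')^*,(p')_*$ producing $|A(k)|^{1/2-1/p}$ and the constants $S_{p'},K_{p'}$, the same replacement of the $L^{2^*}$ embedding by (\ref{sobn}) in the case $n=2$, and the same constant bookkeeping via Stampacchia's Lemma 4.1 yielding the stated $C_n$. The only cosmetic difference is that the paper tests with $\mathrm{sign}(u)(|u|-k)^+$ and works with $A(k)=\{|u|>k\}$ to obtain the two-sided bound in a single pass, whereas you truncate $(u-k)_+$ and run the argument separately for $u$ and $-u$.
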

\begin{proof} 
Let $k\geq k_0={\rm ess} \sup\{|g(x)|:\ x\in \Gamma_D\}$.
Choosing $v={\rm sign}(u)(|u|-k)^+={\rm sign}(u)\max\{|u|-k,0\}\in
 H^1_{\Gamma_D}(\Omega)$ as a 
test function in (\ref{pbumax}), then $\nabla v=\nabla u\in {\bf L}^2(A(k))$,
and  we deduce
\begin{eqnarray}\label{varak}
a_\#\int_{A(k)}|\nabla u|^2\mathrm{dx}
\leq\|{\bf f}\|_{2,A(k)}\|\nabla u\|_{2,A(k)}+ \\
+\|f\|_{{np\over p+n},\Omega}\|(|u|-k)^+\|_{{np\over
np-n-p},\Omega}+
\|h\|_{(n-1)p/n, \Gamma}\|(|u|-k)^+\|_{{(n-1)p\over np-n-p},\Gamma},\nonumber
\end{eqnarray}
where $A(k)=\{x\in \Omega:\ |u(x)|>k\}$.
Using the H\"older inequality, it follows that
\begin{eqnarray*}
\|{\bf f}\|_{2,A(k)}
\leq \|{\bf f}\|_{p,\Omega}|A(k)|^{1/2-1/p}\qquad (p>2).
\end{eqnarray*}

Making use of (\ref{sobs})-(\ref{sobk}) and
$(|u|-k)^+\in
W^{1,q}_{\Gamma_D}(\Omega)$ with $q=p'<n$, and  the H\"older inequality, we get
\begin{eqnarray*}
\|(|u|-k)^+\|_{np/(n(p-1)-p),\Omega}
&\leq& S_{p'}\|\nabla u\|_{2,A(k)}|A(k)|^{1/p'-1/2};\\
\|(|u|-k)^+\|_{(n-1)p/(n(p-1)-p),\Gamma}
&\leq& K_{p'}\|\nabla u\|_{2, A(k)}|A(k)|^{1/p'-1/2}, 
\end{eqnarray*}
if provided by $p'<2\leq n$.
Inserting last three inequalities into (\ref{varak}) we obtain
\begin{equation}\label{nuak}
\|\nabla u\|_{2,A(k)}\leq C_{n,p}|A(k)|^{1/2-1/p} ,
\end{equation}
where the positive constant $C_{n,p}$ is
\[C_{n,p}=\left(
\|{\bf f}\|_{p,\Omega}+S_{p'}\|f\|_{np/(p+n),\Omega}+K_{p'}
 \|h\|_{(n-1)p/n,\Gamma}
\right)/a_\#,\quad\forall n\geq 2.
\]
Taking into account that $A(h) \subset A(k)$ when $ h > k >k_0$, we find
\begin{equation}\label{aalfa1}
(h-k)|A(h)|^{1/\alpha}\leq
\|(|u|-k)^+\|_{\alpha,A(h)}\leq
\|(|u|-k)^+\|_{\alpha,A(k)}:=I,\quad \forall\alpha\geq 1.
\end{equation}

{\sf Case} $n>2$. Take $\alpha=2^*=2n/(n-2)$ in (\ref{aalfa1}).
Making use of (\ref{sobs})  and $(|u|-k)^+\in
W^{1,q}_{\Gamma_D}(\Omega)$ with $q=2$, and inserting (\ref{nuak}), we deduce
\[
I
\leq S_2
\|\nabla u\|_{2,A(k)}\leq S_2C_{n,p}|A(k)|^{1/2-1/p} .\]
Therefore, we conclude
\[|A(h)|\leq \left(S_2C_{n,p} \over h-k\right)^{2^*}|A(k)|^\beta, \qquad\beta=2^*(1/2-1/p),\]
where $\beta>1$ if and only if $p>n$.
By appealing to \cite[Lemma 4.1]{stamp63-64} we obtain
\[|A(k_0+S_2C_{n,p}|\Omega|^{(\beta-1)/\alpha}2^{\beta/(\beta-1)})|=0.\]
This means that the essential supremmum does not exceed the well determined constant $M:=k_0+S_2C_{n,p}|\Omega|^{(\beta-1)/\alpha}2^{\beta/(\beta-1)}$.

{\sf Case} $n=2$. Choose $\alpha=2$  in (\ref{aalfa1}).
Using (\ref{sobn})  for
$(|u|-k)^+\in
W^{1,2}_{\Gamma_D}(\Omega)$ followed by the H\"older inequality,
and inserting (\ref{nuak}), we obtain
\[
I\leq {1\over\sqrt 2}\|\nabla u\|_{2,A(k)}|A(k)|^{1/2}\leq {C_{2,p}
\over\sqrt 2}|A(k)|^{1-1/p}.
\]
Therefore, we find
\[|A(h)|\leq \left(C_{2,p}/\sqrt{2} \over h-k\right)^{2}|A(k)|^\beta, \qquad\beta=2(1-1/p),\]
where $\beta>1$ if and only if $p>2$.
Then, (\ref{supess}) holds
by appealing to \cite[Lemma 4.1]{stamp63-64} as in the anterior case
($n>2$).

 This completes the proof of Proposition \ref{max}.
\end{proof}

\begin{remark}
The Dirichlet problem  studied by Stampacchia in \cite{stamp63-64}
coincides with (\ref{omega})-(\ref{gama}), with $\Gamma=\emptyset$, $f=g=0$, and $n>2$.
\end{remark}

Let us extend Proposition \ref{max} up to the boundary.
\begin{proposition}\label{maxcor}
Under the conditions of Proposition \ref{max},
 any weak solution $u\in H^1_{\Gamma_D}(\Omega)$ to 
(\ref{omega})-(\ref{gama}) satisfies,
 for $p>2(n-1)$ if $n>2$,  
\begin{eqnarray}
 \label{supesscor}\qquad
{\rm  ess } \sup_{\Omega\cup\Gamma}|u|\leq {\rm  ess } \sup_{\Gamma_D}|g|+
{2^{(n-1)(p-2)/[p-2(n-1)]}\over a_\#}|\Omega|^{1/[2(n-1)]-1/p}
\times\\
\times\left[
\left(|\Omega|^{(n-2)/[2n(n-1)]}S_2+K_2\right)\|{\bf f}\|_{p,\Omega}+
\right.\nonumber\\
\left.+
\left(|\Omega|^{(n-2)/[2n(n-1)]}S_2S_{b'}+K_2S_{p'}\right)
\|f\|_{np/(p+n),\Omega}+\right.\nonumber\\
\left.+
\left(|\Omega|^{(n-2)/[2(n-1)^2]}S_2K_{b'}+K_2K_{p'}\right)
 \|h\|_{(n-1)p/n,\Gamma}
\right],\nonumber
\end{eqnarray}
with $1/b=1/p+(n-2)/[2n(n-1)]$.
For  $n=2$, $p>2\alpha/(\alpha-1)$, and $\alpha>1$, 
then 
 any weak solution $u\in H^1_{\Gamma_D}(\Omega)$
 to (\ref{omega})-(\ref{gama}) satisfies
\begin{eqnarray} \label{supesscor2}
{\rm  ess } \sup_{\Omega\cup\Gamma}|u|\leq {\rm  ess } \sup_{\Gamma_D}|g|+
{2^{\frac{p(\alpha+1)-2\alpha}{p(\alpha-1)-2\alpha}}
\over a_\#}|\Omega|^{{\alpha-1 \over 2\alpha}-{1\over p}}
\times\\
\times\left[\left(|\Omega|^{1/(2\alpha)}S_{2\alpha/(\alpha+2)}+K_
{2\alpha/(\alpha+2)}
\right)\|{\bf f}\|_{p,\Omega}+\right.\nonumber \\
+\left(|\Omega|^{1/2+1/\alpha}S_{2\alpha/(\alpha+2)}
S_{2\alpha p/[p(2\alpha-1)-2\alpha]}+K_
{2\alpha/(\alpha+2)}S_{p'} 
\right)\|f\|_{2p/(p+2),\Omega}\nonumber \\
\left.+\left(|\Omega|^{1/\alpha}S_{2\alpha/(\alpha+2)}
K_{2\alpha p/[p(2\alpha-1)-2\alpha]}+K_
{2\alpha/(\alpha+2)}K_{p'} 
\right)\|h\|_{p/2,\Gamma}
\right].\nonumber
\end{eqnarray}
\end{proposition}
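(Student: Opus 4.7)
The plan is to extend the Stampacchia iteration of Proposition~\ref{max} so that the final conclusion controls $|u|$ on $\Omega\cup\Gamma$ and not only on $\Omega$. I would use the same test function $v={\rm sign}(u)(|u|-k)^{+}\in H^{1}_{\Gamma_D}(\Omega)$ for any $k\ge k_{0}:={\rm ess\,sup}_{\Gamma_D}|g|$; the H\"older/Sobolev/trace bookkeeping with exponent $p'$ already performed in the proof of Proposition~\ref{max} is untouched and still delivers the energy estimate (\ref{nuak}), that is $\|\nabla u\|_{2,A(k)}\le C_{n,p}|A(k)|^{1/2-1/p}$.

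The new ingredient for $n>2$ is to iterate at the trace-critical exponent $2_{*}=2(n-1)/(n-2)$ instead of the Sobolev-critical $2^{*}$, since $v$ is integrable to the power $2_{*}$ on both $\Omega$ and $\Gamma$. Two parallel inequalities enter. Sobolev (\ref{sobs}) at $q=2$ combined with H\"older from $L^{2^{*}}$ down to $L^{2_{*}}$ gives $\|v\|_{2_{*},\Omega}\le|\Omega|^{(n-2)/[2n(n-1)]}S_{2}\|\nabla v\|_{2,\Omega}$, whose exponent of $|\Omega|$ is precisely $1/2_{*}-1/2^{*}$; trace (\ref{sobk}) at $q=2$ gives directly $\|v\|_{2_{*},\Gamma}\le K_{2}\|\nabla v\|_{2,\Omega}$. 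Upper-bounding $\|v\|_{2_{*},\Omega}$ by $\|v\|_{2_{*},\Omega}+\|v\|_{2_{*},\Gamma}$ and inserting (\ref{nuak}) produces the level-set decay
\[
(h-k)|A(h)|^{1/2_{*}}\le\bigl(|\Omega|^{(n-2)/[2n(n-1)]}S_{2}+K_{2}\bigr)C_{n,p}|A(k)|^{1/2-1/p},
\]
with Stampacchia exponent $\beta=2_{*}(1/2-1/p)$ exceeding $1$ exactly when $p>2(n-1)$, which matches the hypothesis. Applying \cite[Lemma~4.1]{stamp63-64} yields $|A(k_{0}+M)|=0$ with $M$ of the claimed form, and since the trace of $(|u|-k_{0}-M)^{+}$ then vanishes in $L^{2}(\Gamma)$, the bound transports automatically from $\Omega$ to $\Gamma$, giving (\ref{supesscor}).

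For $n=2$ the embedding $H^{1}\hookrightarrow L^{2_{*}}$ is unavailable, so I fix any $\alpha>1$ and use the subcritical $W^{1,q}$-Sobolev and trace inequalities with $q=2\alpha/(\alpha+2)<2$, for which $q^{*}=\alpha$; the same scheme then yields the joint decay at iteration exponent $\alpha$ after an additional H\"older step $\|\nabla v\|_{q,\Omega}\le|A(k)|^{1/q-1/2}\|\nabla v\|_{2,\Omega}$, and the condition $\beta>1$ becomes $p>2\alpha/(\alpha-1)$, matching (\ref{supesscor2}). The main technical obstacle is the hybrid coefficient structure of the form $|\Omega|^{\ldots}S_{2}S_{b'}+K_{2}S_{p'}$ (and the analogue for $\|h\|$) in (\ref{supesscor}): each of the three data terms on the right of (\ref{varak}) acquires both a bulk and a boundary contribution, which forces running two parallel H\"older/Sobolev interpolations with the auxiliary exponent $b$ chosen so that $1/b=1/p+(n-2)/[2n(n-1)]$; checking that both chains produce the same iteration exponent $2_{*}$ (respectively $\alpha$ for $n=2$) and that the Stampacchia doubling factor $2^{\beta/(\beta-1)}$ reproduces $2^{(n-1)(p-2)/[p-2(n-1)]}$ is where the careful book-keeping lies.
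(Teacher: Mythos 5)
Your overall strategy is the same as the paper's: enlarge the level set to $A(k)=\{x\in\bar\Omega:|u(x)|>k\}$, split $I$ into an interior piece controlled by Sobolev and a boundary piece controlled by the trace inequality, iterate at the trace-critical exponent $\alpha=2(n-1)/(n-2)$ rather than $2^*$, and verify that $\beta=\alpha(1/2-1/p)>1$ iff $p>2(n-1)$. Your computations of $\beta/(\beta-1)$ and $(\beta-1)/\alpha$ correctly reproduce the doubling factor $2^{(n-1)(p-2)/[p-2(n-1)]}$ and the $|\Omega|^{1/[2(n-1)]-1/p}$ power, so the skeleton is sound.

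There is, however, a genuine gap between your explicit middle step and the constant the proposition actually claims. You write
\[
(h-k)|A(h)|^{1/2_*}\le\bigl(|\Omega|^{(n-2)/[2n(n-1)]}S_2+K_2\bigr)C_{n,p}|A(k)|^{1/2-1/p},
\]
having bounded the H\"older factor by $|\Omega|^{1/2_*-1/2^*}$ and then applied (\ref{nuak}) at exponent $p$ on both branches. That inequality is true, but feeding it into Stampacchia produces $C_{n,p}$ in front of every data norm, i.e.\ only $S_{p'}$ and $K_{p'}$; it cannot generate the mixed coefficients $|\Omega|^{(n-2)/[2n(n-1)]}S_2S_{b'}+K_2S_{p'}$ and $|\Omega|^{(n-2)/[2(n-1)^2]}S_2K_{b'}+K_2K_{p'}$ appearing in (\ref{supesscor}). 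The point the paper exploits is that, since $v$ is supported in $A(k)$, the H\"older step gives $|A(k)|^{1/\alpha-1/2^*}$ and not $|\Omega|^{1/\alpha-1/2^*}$; one then applies (\ref{nuak}) at the auxiliary exponent $b$ with $1/b=1/p+1/\alpha-1/2^*=1/p+(n-2)/[2n(n-1)]$ on the Sobolev branch (so the two $|A(k)|$-powers collapse to $1/2-1/p$) and at $b=p$ on the trace branch, yielding $I\le(S_2C_{n,b}+K_2C_{n,p})|A(k)|^{1/2-1/p}$; expanding $C_{n,b}$ in terms of the prescribed $L^p$-data by a further H\"older step then reveals the $|\Omega|$-powers and the $S_{b'},K_{b'}$ constants. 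You correctly name this two-chain bookkeeping as the technical crux in your final paragraph, but your middle paragraph has already committed to the coarser $|\Omega|$-bound that makes the auxiliary $b$ unnecessary and hence produces a different (and not the stated) constant. The same remark applies mutatis mutandis to the $n=2$ computation, where the paper uses $b=2\alpha p/(p+2\alpha)$ and $b=p$ on the two branches.
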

\begin{proof}
Let $k\geq k_0={\rm ess} \sup\{|g(x)|:\ x\in \Gamma_D\}$.
For each $b>2$,
 ${\bf f}\in {\bf L}^b(\Omega)$,  $f\in L^{nb/(b+n)}(\Omega)$, and  $h\in L^{(n-1)b/n}(\Gamma)$,  (\ref{nuak}) reads
\begin{equation}\label{nuak2}
\|\nabla u\|_{2,A(k)\cap\Omega}\leq C_{n,b}|A(k)|^{1/2-1/b} ,
\end{equation}
where $A(k)=\{x\in\bar \Omega:\ |u(x)|>k\}$. 
 With this definition, the integral $I$ from the proof of 
Proposition \ref{max} reads
\[
I=
\|(|u|-k)^+\|_{\alpha,A(k)\cap\Omega}+
\|(|u|-k)^+\|_{\alpha,A(k)\cap\Gamma},
\]
and for  $ h > k > k_0$, we have
\[ 
(h-k)|A(h)|^{1/\alpha}\leq I,\quad \forall\alpha\geq 1.
\] 

{\sf Case} $n>2$. Take $\alpha=2(n-1)/(n-2)<2^*=2n/(n-2)$.
Making use of (\ref{sobs})-(\ref{sobk})  and
$(|u|-k)^+\in
W^{1,q}_{\Gamma_D}(\Omega)$ with $q=2$, we deduce
\begin{equation}\label{isk}
I
\leq S_2|A(k)|^{1/\alpha-1/2^*}\|\nabla u\|_{2,A(k)\cap\Omega}+K_2
\|\nabla u\|_{2,A(k)\cap\Omega}.
\end{equation}
Since there exist different exponents, and our objective is to find one $\beta>1$,
we apply (\ref{nuak2}) twice ($ 1/b=1/p+1/\alpha-1/2^*<1/n$
and $b=p>2(n-1)>n>2$), obtaining
\[
I\leq\left( S_{2} C_{n,b}+K_{2}C_{n,p}\right)
|A(k)|^{1/2-1/p},\qquad 1/b=1/p+(n-2)/[2n(n-1)].
\] 

Therefore, we conclude
\[|A(h)|\leq \left(S_2C_{n,b}+K_{2}C_{n,p}
 \over h-k\right)^{\alpha}|A(k)|^\beta, \qquad\beta=\alpha(1/2-1/p),\]
where $\beta>1$ if and only if $p>2(n-1)$.
Notice that
\begin{eqnarray*}
C_{n,b}\leq |\Omega|^{(n-2)/[2n(n-1)]}\left(
\|{\bf f}\|_{p,\Omega}+S_{b'}\|f\|_{np/(p+n),\Omega}\right)+\\
+|\Omega|^{(n-2)/[2(n-1)^2]}K_{b'}
 \|h\|_{(n-1)p/n,\Gamma}
.
\end{eqnarray*}

{\sf Case} $n=2$.
Using (\ref{sobs}) with $q=2\alpha/(\alpha+2)<2$,
(\ref{sobk}) with $q=2\alpha/(\alpha+1)<2$ and the H\"older inequality, we have
\begin{eqnarray*}
\|(|u|-k)^+\|_{\alpha,A(k)\cap\Omega}&
\leq &S_{2\alpha/( \alpha+2)}
\|\nabla u\|_{2,A(k)\cap\Omega}|A(k)|^{1/\alpha};\\
\|(|u|-k)^+\|_{\alpha,A(k)\cap\Gamma}&
\leq &K_{2\alpha /(\alpha+1)}
\|\nabla u\|_{2,A(k)\cap\Omega}|A(k)|^{1/(2\alpha)} .
\end{eqnarray*}
Thus, we deduce
\[
I
\leq S_{2\alpha/(\alpha+2)}
|A(k)|^{1/\alpha}\|\nabla u\|_{2,A(k)\cap\Omega}+K_{2\alpha/(\alpha+2)}
|A(k)|^{1/(2\alpha)}
\|\nabla u\|_{2,A(k)\cap\Omega}.
\] 

Applying
(\ref{nuak2}) twice ($ b=2\alpha p/(p+2\alpha)>2$
and $b=p>2\alpha/(\alpha-1)>2$), we conclude
\[|A(h)|\leq \left( S_{2\alpha/(\alpha+2)}C_{2,2\alpha p/(p+2\alpha)}+K_
{2\alpha/(\alpha+2)}C_{2,p}
 \over h-k\right)^{\alpha}|A(k)|^{\beta},\]
where $\beta=\alpha(1/(2\alpha)+
1/2-1/p)>1$ if and only if $p>2\alpha /(\alpha-1)$.

Finally, we find (\ref{supesscor})-(\ref{supesscor2})
by appealing to \cite[Lemma 4.1]{stamp63-64}
similarly as to obtain (\ref{supess}).
\end{proof}

Next, let us state the explicit local estimates.
The  Caccioppoli inequality (\ref{cacc})
coincides with the interior  Caccioppoli inequality
whenever $B_R(x)\subset\subset \Omega$ and
 $\eta\in W^{1,\infty}_0(B_R(x))$ denotes
 a cut-off function, and it corresponds to
 \cite[Lemma 5.2]{stamp63-64} if the lower bound of $a$ is related with
 its upper bound by $a_\#=1/a^\#$.
\begin{proposition}\label{max0}
Let $n\geq 2$,   $|\Gamma_D|>0$, ${\bf f} ={\bf 0}$ in $\Omega$,
$f,g,h=0$, respectively, in $\Omega$, on $\Gamma_D$, and on $\Gamma$,
and $u$
be the unique weak solution $u$ to (\ref{omega})-(\ref{gama})
in accordance with Proposition \ref{exist}. 
Then we have

1. the Caccioppoli inequality
\begin{equation}\label{cacc}
\int_{\Omega}\eta^2      |\nabla u|^2\mathrm{dz} \leq 4{a^\#
\over a_\#}
\int_{\Omega}      u^2 |\nabla  \eta|^2\mathrm{dz},
\end{equation}
for any $\eta\in W^{1,\infty}(\mathbb{R}^n)$.

2. For arbitrary  $x\in\Omega$, $R>0$, and
 $k_0\geq 0$, 
  \begin{equation}
\build{{\rm  ess} \sup}_{ \Omega(x,R/2)}|u|\leq k_0+
2^{3n+2+{(3n)^2\over 4}}{ c^{3n/2}\omega_n}
\left( R^{-n}
\int_{\Omega(x,R)}  (| u|-k_0)^2\mathrm{dz}\right)^{1\over 2}
,\label{cota0}
\end{equation}
 where  $c=S_{2n/(n+2)}\left(1+2\sqrt{a^\#/ a_\#} \right)$, and $\Omega(x,r)=\Omega\cap B_r(x)$ for any $r>0$.
\end{proposition}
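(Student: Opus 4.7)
For (\ref{cacc}) the plan is to test (\ref{pbumax}) with $v=\eta^2 u$, which lies in $H^1_{\Gamma_D}(\Omega)$ since $u$ vanishes on $\Gamma_D$ and $\eta\in W^{1,\infty}(\mathbb{R}^n)$ is bounded. Because all data vanish, the identity $\int_\Omega a\,\nabla u\cdot\nabla(\eta^2 u)\,\mathrm{dx}=0$ expands to $\int_\Omega a\eta^2|\nabla u|^2\,\mathrm{dx}=-2\int_\Omega a\eta u\,\nabla u\cdot\nabla\eta\,\mathrm{dx}$. I would then apply the weighted Cauchy--Schwarz inequality to the right-hand side, splitting $a=\sqrt{a}\cdot\sqrt{a}$, and absorb the common factor $\bigl(\int a\eta^2|\nabla u|^2\bigr)^{1/2}$ to deduce $\int a\eta^2|\nabla u|^2\le 4\int a u^2|\nabla\eta|^2$. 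Together with the bounds $a_\#\le a\le a^\#$ this yields (\ref{cacc}) directly.

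For (\ref{cota0}) the plan is a De Giorgi iteration on shrinking balls and growing truncation levels. For $j\ge 0$ set $r_j=R/2+R\cdot 2^{-(j+1)}\searrow R/2$ and $k_j=k_0+d(1-2^{-j})\nearrow k_0+d$ for some $d>0$ to be fixed, put $w_j=(|u|-k_j)^+$, $A_j=\Omega(x,r_j)\cap\{|u|>k_j\}$, and $\psi_j=\|w_j\|_{L^2(\Omega(x,r_j))}$. I would pick cut-offs $\eta_j\in W^{1,\infty}_0(B_{r_j}(x))$ with $\eta_j=1$ on $B_{r_{j+1}}(x)$ and $|\nabla\eta_j|\le 2^{j+3}/R$, and test (\ref{pbumax}) with $v_j=\eta_j^2\,\mathrm{sign}(u)\,w_{j+1}\in H^1_{\Gamma_D}(\Omega)$ (admissible because $k_{j+1}\ge 0$ and $u=0$ on $\Gamma_D$). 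Repeating the calculation of the first part yields the localised Caccioppoli inequality $\int\eta_j^2|\nabla w_{j+1}|^2\le 4(a^\#/a_\#)\int w_{j+1}^2|\nabla\eta_j|^2$. Since $\eta_j w_{j+1}\in V_2$ has support inside $A(k_{j+1},r_j)$, I would next apply the Sobolev inequality (\ref{sobs}) with $q=2n/(n+2)$ (so that $q^*=2$) together with H\"older's inequality to trade the $L^2$-norm of $\nabla(\eta_j w_{j+1})$ for its $L^{2n/(n+2)}$-norm at the cost of a factor $|A(k_{j+1},r_j)|^{1/n}$; combined with the triangle inequality $\|\nabla(\eta_j w_{j+1})\|_2\le\|\eta_j\nabla w_{j+1}\|_2+\|w_{j+1}\nabla\eta_j\|_2$ and the Caccioppoli bound, this produces exactly the constant $c=S_{2n/(n+2)}(1+2\sqrt{a^\#/a_\#})$ of the statement and gives $\psi_{j+1}\le c\,|A(k_{j+1},r_j)|^{1/n}\,(2^{j+3}/R)\,\psi_j$.

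Chebyshev's inequality at level $k_{j+1}$ yields $|A(k_{j+1},r_j)|\le 4^{j+1}d^{-2}\psi_j^2$, so the recursion takes the form $\psi_{j+1}\le Cb^j\psi_j^{1+2/n}$ with $C$ and $b$ depending explicitly on $c$, $R$, $d$ and $n$. Stampacchia's geometric-decay lemma \cite[Lemma~4.1]{stamp63-64} then forces $\psi_j\to 0$ provided $\psi_0\le C^{-n/2}b^{-n^2/4}$; solving for the smallest admissible $d$, and rewriting $\psi_0=R^{n/2}\bigl(R^{-n}\int_{\Omega(x,R)}(|u|-k_0)^2\,\mathrm{dz}\bigr)^{1/2}$, produces an explicit $d$ of the form appearing on the right-hand side of (\ref{cota0}). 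Since $r_j\to R/2$, $k_j\to k_0+d$ and $\psi_j\to 0$, the conclusion $\mathrm{ess\,sup}_{\Omega(x,R/2)}|u|\le k_0+d$ follows. The main technical obstacle is the meticulous bookkeeping of constants through the Caccioppoli, Sobolev, H\"older, Chebyshev and iteration-lemma steps so as to recover the exact form $2^{3n+2+(3n)^2/4}c^{3n/2}\omega_n$ in (\ref{cota0}); in particular, the factor $\omega_n$ should emerge from the conversion of the $L^2$-norm $\psi_0$ into a volume-averaged quantity on $B_R(x)$, while the powers $3n/2$ and $(3n)^2/4$ come from the exponents in Stampacchia's lemma combined with the compounded $c$-factors.
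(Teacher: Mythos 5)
Your derivation of the Caccioppoli inequality is exactly the paper's argument: test with $v=\eta^2u$, expand, use weighted Cauchy--Schwarz with weight $a$ to absorb $\tfrac12\int a\eta^2|\nabla u|^2$, and bound $a$ above and below. Nothing to add.

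\textbf{Part 2.} Your scheme is a legitimate De~Giorgi iteration and would indeed produce a local $L^\infty$ bound of the required shape, but it follows a genuinely different route from the paper, and the constant--bookkeeping you speculate about does not come out the way you expect. The paper does \emph{not} iterate a single sequence $\psi_j$; instead it introduces the two-parameter quantity $\phi(k,r)=|A(k,r)|\cdot\|(|u|-k)^+\|_{2,A(k,r)}$, derives the recursion $\phi(h,r)\le \dfrac{c}{(R-r)(h-k)^{2/(3n)}}\phi(k,R)^{1+2/(3n)}$, and applies Stampacchia's two-parameter Lemma~5.1 (not the one-parameter geometric-decay Lemma~4.1 you invoke) with $\gamma=1$, $\alpha=2/(3n)$, $\beta=1+2/(3n)$. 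The factor $c^{3n/2}$ in~(\ref{cota0}) is $c^{1/\alpha}$ with $\alpha=2/(3n)$; the power of $2$ reflects $(\alpha+\gamma)\beta/[\alpha(\beta-1)]$ plus the $(\sigma R_0)^{-\gamma/\alpha}$ term with $\sigma=1/2$; and $\omega_n$ enters from the bound $|A(k_0,R_0)|\le \omega_nR_0^n$ inside $\phi(k_0,R_0)$.

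In your discrete scheme the iterated quantity is the $L^2$-norm alone, so after Chebyshev you get $\psi_{j+1}\le C b^j\psi_j^{1+\delta}$ with $\delta=2/n$; the fast-convergence lemma then produces $C^{-1/\delta}b^{-1/\delta^2}$, i.e.\ a factor $c^{n/2}$ (not $c^{3n/2}$), a power of two of order $n^2/4$ (not $9n^2/4$), and no $\omega_n$ at all since $\psi_0$ never involves the measure of a sublevel set. Your closing claim that ``the powers $3n/2$ and $(3n)^2/4$ come from the exponents in Stampacchia's lemma combined with the compounded $c$-factors'' and that ``the factor $\omega_n$ should emerge from the conversion of the $L^2$-norm $\psi_0$ into a volume-averaged quantity'' is therefore mistaken as written. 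Your method gives a strictly smaller constant, so it does prove the inequality~(\ref{cota0}) a~fortiori, but you should state that explicitly rather than expect to recover the paper's explicit constant through your iteration; the paper's constant depends on its particular choice of $\phi$ and $\alpha=2/(3n)$, which your dyadic $L^2$-iteration does not reproduce.
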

\begin{proof} 

1.
Let us choose $v=u\eta^2\in  H^1_{\Gamma_D}(\Omega)$
 as a test function in (\ref{pbumax}).
Thus, applying the H\"older inequality we deduce
\[
\int_{\Omega}     a\eta^2 |\nabla u|^2\mathrm{dz}
=-2
\int_{\Omega}     a\eta\nabla u\cdot
\nabla \eta u\mathrm{dz} 
\leq {1\over 2}
\int_{\Omega}     a\eta^2 |\nabla u|^2\mathrm{dz}+
2
\int_{\Omega}     au^2 |\nabla \eta|^2\mathrm{dz}.
\]
Then, using the upper and lower bounds of $a$, we conclude (\ref{cacc}).

2.  Let $x\in\Omega$  be  fixed but arbitrary.
Arguing as in Proposition \ref{max},
let $k\geq k_0$, and 
with the definition of the set $A(k,r)
=\{z\in \Omega(x,r):\ |u(z)|>k\}$,
the property (\ref{aalfa1}) is still valid. In particular,
 we have, for  $ h > k > k_0$, 
\begin{equation}\label{aalfa}
(h-k)|A(h,r)|^{1/2}\leq 
\||u|-k\|_{2,A(h,r)}\leq
\||u|-k\|_{2,A(k,r)},\quad\forall r>0.
\end{equation}

Fix  $0<r<R\leq R_0$, and let us take
 $v={\rm sign}(u)(|u|-k)^+\eta^2\in H^1_{\Gamma_D}(\Omega)$ as a 
test function in (\ref{pbumax}), where $\eta\in W^{1,\infty}(\mathbb{R}^n)
$ is the cut-off
function defined by $\eta\equiv 1$ in $B_r(x)$, $\eta\equiv 0$ in $\mathbb{R}^n
\setminus B_R(x)$, and $\eta(y)=(R-|y-x|)/(R-r)$
for all $y\in B_R(x)\setminus B_r(x)$.
Thus, we have
 $0\leq \eta\leq 1$ in $\mathbb{R}^n$,
 and $|\nabla\eta|\leq 1/(R-r)$ a.e. in $B_R(x)$, and
that (\ref{cacc}) reads
\begin{equation}\label{nuak3}
\int_{A(k,R)}\eta^2 
|\nabla u|^2\mathrm{dz}\leq 4{a^\#\over a_\#} 
\int_{A(k ,R)}   |\nabla\eta|^2    (|u|-k)^2\mathrm{dz}.
\end{equation}

 Making use of
(\ref{sobs}) and $\eta(|u|-k)^+\in W^{1,q}_{\Gamma_D}(\Omega)$ 
with exponent $q=2n/(n+2)<2\leq n$,
 and the H\"older inequality, we have
\begin{eqnarray}\label{aalf}
\||u|-k\|_{2,A(k,r)}\leq \|\eta(|u|-k)^+\|_{2,\Omega}
\leq \\
\leq S_{2n/( n+2)}
\|\nabla (\eta (|u|-k)^+)\|_{2n/(n+2),\Omega}\leq\nonumber\\
\leq S_{2n/( n+2)}
\|(|u|-k)^+\nabla \eta+\eta \nabla u\|_{2,A(k,R)}|A(k,R)|^{1/n} .\nonumber
\end{eqnarray}

Applying 
 the properties of $\eta$,
inserting (\ref{nuak3}) into (\ref{aalf}), and
gathering the second inequality from (\ref{aalfa}), we get
\[
\||u|-k\|_{2,A(h ,r)}
\leq |A(k,R)|^{1/n}{c \over R-r}
\|  |u|-k\|_{2,A(k ,R)}.
\] 

In order to apply \cite[Lemma 5.1]{stamp63-64} that leads
\[
\phi(k_0+2^{{(\alpha+\beta)\beta\over \alpha (\beta-1)}}
{c^{1/\alpha}
\over (\sigma R_0)^{\gamma/\alpha}}[\phi(k_0,R_0)]^{(\beta-1)/\alpha},
R_0-\sigma R_0)=0,\quad \forall \sigma\in ]0,1[\] 
 with $\gamma=1$, $\alpha=2/(3n)$, $\beta=1+2/(3n)>1$,
we use the above inequality, and the  inequality  (\ref{aalfa})
with $r$ replaced by $R$, obtaining
\begin{eqnarray*}
\phi(h,r):=|A(h,r)|
\|  |u|-h\|_{2,A(h ,r)}  \leq\\
\leq |A(k,R)|^{1+{1\over n}-{\alpha\over 2}}
{c \over R-r}
{1 \over (h-k)^{\alpha}}
\|  |u|-k\|_{2,A(k ,R)}^{1+\alpha}\\
=
{c
\over (R-r)  (h-k)^{2/(3n)}}\phi(k,R)^{1+2/(3n)}.
\end{eqnarray*}
Then, taking $R=R_0$
and $\sigma=1/2$,  (\ref{cota0}) holds.

 Therefore, the proof of Proposition \ref{max0} is finished.
\end{proof}

\begin{remark}
The cut-off function explicitly given in Proposition \ref{max0}
does not belong  to 
 $ C^1(B_R(x))$.
\end{remark}

Let us prove the corresponding Neumann version of Proposition \ref{max0}.
\begin{proposition}\label{maxn}
Let $n\geq 2$,   $|\Gamma_D|=0$, ${\bf f} ={\bf 0}$ in $\Omega$,
$f,h=0$, respectively, in $\Omega$,  and on $\Gamma$,
and $u$
be the unique weak solution $u$ to (\ref{omega})-(\ref{gama})
in accordance with Proposition \ref{neumann}. 
 For arbitrary  $x\in\Omega$, $R>0$, and $k_0\in\mathbb{R}$, 
then (\ref{cota0}) holds
with $c=S_{2n/(n+2)}\left(R+1+2\sqrt{a^\#/ a_\#} \right)$.
\end{proposition}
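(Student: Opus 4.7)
The plan is to follow the argument of Proposition \ref{max0} as closely as possible, with one enabling observation and one substantive modification to produce the $R$ appearing in the target constant $c=S_{2n/(n+2)}(R+1+2\sqrt{a^\#/a_\#})$. Since $\mathbf{f}=\mathbf{0}$, $f=0$, and $h=0$, the weak formulation (\ref{pbumax}) reduces to $\int_\Omega a\nabla u\cdot\nabla v\,\mathrm{dx}=0$ for every $v\in V_2$. Any $v\in H^1(\Omega)$ differs from an element of $V_2$ only by a constant (with vanishing gradient), so this identity automatically extends to all $v\in H^1(\Omega)$. Consequently, the Dirichlet test functions used in Proposition \ref{max0} remain admissible even when they fail to have zero mean.

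With this enlarged test class, the Caccioppoli inequality (\ref{cacc}) follows by choosing $v=u\eta^2\in H^1(\Omega)$ exactly as in Proposition \ref{max0}. For the level-set argument, I fix $k\geq k_0$ (with $k_0\in\mathbb{R}$ arbitrary, since there is no $\Gamma_D$ constraint to respect), choose the same piecewise-linear cut-off $\eta$ on $B_R(x)\setminus B_r(x)$ with $|\nabla\eta|\leq(R-r)^{-1}$, define $A(k,r)=\{z\in\Omega(x,r):|u(z)|>k\}$, and test with $v=\mathrm{sign}(u)(|u|-k)^+\eta^2\in H^1(\Omega)$. This reproduces the truncated Caccioppoli bound (\ref{nuak3}) verbatim and preserves the elementary inequality (\ref{aalfa}).

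The substantive modification comes in the Sobolev step. The Dirichlet proof applies (\ref{sobs}) directly to $w:=\eta(|u|-k)^+$ using that $w\in V_{2n/(n+2)}=W^{1,2n/(n+2)}_{\Gamma_D}(\Omega)$ (since $|u|\leq k_0\leq k$ on $\Gamma_D$ forces $w=0$ there). Since $w$ has no such zero-mean property in the Neumann setting, I would apply (\ref{sobs}) to $w-\bar w\in V_{2n/(n+2)}$ to obtain the Sobolev--Poincar\'e bound $\|w-\bar w\|_{2,\Omega}\leq S_{2n/(n+2)}\|\nabla w\|_{2n/(n+2),\Omega}$, and then control the mean correction $|\bar w|\,|A(h,r)|^{1/2}$ via the fact that $w$ is supported in $\Omega(x,R)\subset B_R(x)$ together with a Poincar\'e bound on $B_R(x)$. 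The Poincar\'e step introduces the extra factor of $R$ that promotes the Dirichlet coefficient $S_{2n/(n+2)}(1+2\sqrt{a^\#/a_\#})$ into $S_{2n/(n+2)}(R+1+2\sqrt{a^\#/a_\#})$.

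Once the recursion $\phi(h,r)\leq c(R-r)^{-1}(h-k)^{-2/(3n)}\phi(k,R)^{1+2/(3n)}$ is established with $\phi(k,r)=|A(k,r)|\cdot\||u|-k\|_{2,A(k,r)}$ and the new value of $c$, Stampacchia's Lemma 5.1 (with $\gamma=1$, $\alpha=2/(3n)$, $\beta=1+2/(3n)$) closes the iteration exactly as in Proposition \ref{max0} and yields (\ref{cota0}). The main obstacle will be the Sobolev step: carefully tracking the mean-correction term through the Sobolev--Poincar\'e inequality, combined with the same H\"older exponent $1/q-1/q^{*}=1/n$ used in the Dirichlet case, so that the final coefficient is precisely $R+1+2\sqrt{a^\#/a_\#}$ rather than some cruder $R$-dependent expression.
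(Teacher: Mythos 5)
Your scaffolding is consistent with the paper — the observation that the Neumann identity with $\mathbf{f}=\mathbf{0}$, $f=0$, $h=0$ extends to all $v\in H^1(\Omega)$ is just a rephrasing of the paper's step of subtracting the boundary (or domain) mean of the test function, and the Caccioppoli inequality and the recursion via Stampacchia's Lemma 5.1 proceed identically. The gap is in your Sobolev step, and it is substantive. You propose to apply the zero-mean Sobolev inequality (\ref{sobs}) to $w-\bar w$ and then absorb the mean correction $|\bar w|\,|A(h,r)|^{1/2}$ by ``a Poincar\'e bound on $B_R(x)$.'' But $\bar w$ is the average over $\Omega$ (or $\partial\Omega$), not over $B_R(x)$, so no Poincar\'e inequality on the ball is applicable; and if one instead tries to bound $|\bar w|$ directly via the support of $w$ and H\"older, one gets $|\bar w|\,|A(h,r)|^{1/2}\leq |\Omega|^{-1}\|w\|_{2,A(k,R)}\,|A(k,R)|^{1/2}\,|A(h,r)|^{1/2}$, whose exponent in $|A(k,R)|$ is $1$ rather than the required $1/n$, and whose constant involves $|\Omega|^{-1}$ and (after crude bounding by $|B_R|$) a power $R^{n-1}$ rather than the clean $R$ in the target constant. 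So the route as sketched does not close the recursion in the right form, and the source of the $R$ you identify (``the Poincar\'e step'') is not where it actually comes from.

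The paper's proof bypasses the mean correction entirely: it applies the general (non-zero-mean) $W^{1,2n/(n+2)}$-Sobolev embedding directly to $\eta(|u|-k)^+\in W^{1,2n/(n+2)}(\Omega)$, which yields the estimate
\[
\||u|-k\|_{2,A(k,r)}\leq S_{2n/(n+2)}\Big(\big(1+\tfrac{1}{R-r}\big)\||u|-k\|_{2,A(k,R)}+\|\eta\nabla u\|_{2,A(k,R)}\Big)|A(k,R)|^{1/n},
\]
where the extra ``$1$'' (absent from the Dirichlet Proposition~\ref{max0}) is precisely the $L^{2n/(n+2)}$-norm contribution of the full $W^{1,q}$-norm. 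After inserting the Caccioppoli bound $\|\eta\nabla u\|_{2,A(k,R)}\leq 2\sqrt{a^\#/a_\#}\||u|-k\|_{2,A(k,R)}/(R-r)$, the coefficient is $1+\tfrac{1}{R-r}+\tfrac{2\sqrt{a^\#/a_\#}}{R-r}$, and the factor $R$ enters only through the elementary bound $1+\tfrac{1}{R-r}\leq\tfrac{R_0+1}{R-r}$ (valid since $R-r<R\leq R_0$), which puts the coefficient in the form $c/(R-r)$ needed for the iteration lemma, with $c=S_{2n/(n+2)}(R_0+1+2\sqrt{a^\#/a_\#})$ and $R_0=R$. You would need to replace your Sobolev--Poincar\'e-plus-mean-correction plan with this application of the full $W^{1,q}$-Sobolev embedding (or, alternatively, carry the mean-correction bookkeeping through carefully enough to reach the same exponent of $|A(k,R)|$, which your current sketch does not do).
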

\begin{proof} 
Fix  $k_0\in\mathbb{R}$, $x\in\Omega$,  and $0<r<R\leq R_0$ be  arbitrary.
Arguing as in Proposition \ref{max0}, (\ref{nuak3}) is true by taking
$v=\eta^2 {\rm sign}(u) (|u|-k)^+ - -\hspace*{-0.35cm}\int_{\partial\Omega }
\eta^2 {\rm sign}(u) (|u|-k)^+ \mathrm{ds}\in V_2(\partial\Omega)$ or
 $v=\eta^2 {\rm sign}(u) (|u|-k)^+ - -\hspace*{-0.35cm}\int_{\Omega }
\eta^2 {\rm sign}(u)(|u|-k)^+ \mathrm{dx}\in V_2(\Omega)$ as a 
test function in (\ref{pbumax}), and observing that
$\nabla v=\eta^2 \nabla u
+2\eta\nabla\eta {\rm sign}(u) (|u|-k)^+ 
\in {
\bf L}^2({A(k,R)})$.

Applying 
 the properties of $\eta$, the $W^{1,q}$-Sobolev inequality
for $\eta(|u|-k)^+\in W^{1,q}(\Omega)$ 
with exponent $q=2n/(n+2)<2\leq n$,
 and the H\"older inequality, we have
\begin{eqnarray*}
\||u|-k\|_{2,A(k,r)}\leq \|\eta(|u|-k)^+\|_{2,\Omega}
\leq S_{2n/( n+2)}
\|\eta (|u|-k)^+\|_{1,{2n\over n+2},A(k,R)}\nonumber\\
\leq S_{2n\over n+2}\left((1+{1\over R-r})\||u|-k\|_{2,A(k,R)}
+\|\eta \nabla u\|_{2,A(k,R)}\right)|A(k,R)|^{1/n} .
\end{eqnarray*}
Considering $1+1/(R-r)< (R_0+1)/(R-r)$,
and denoting the new constant by the same symbol $c$,
 we may proceed as in the proof of 
Proposition \ref{max0}. Thus, the proof of 
Proposition \ref{maxn} is complete, taking $R=R_0$  into account.
\end{proof}

\begin{remark}
The set $\Omega(x,R)$ is open and bounded, but  may be neither convex nor 
connexe (see Fig. \ref{domain}).
\end{remark}
\begin{figure}
\resizebox*{1\hsize}{!}{\includegraphics{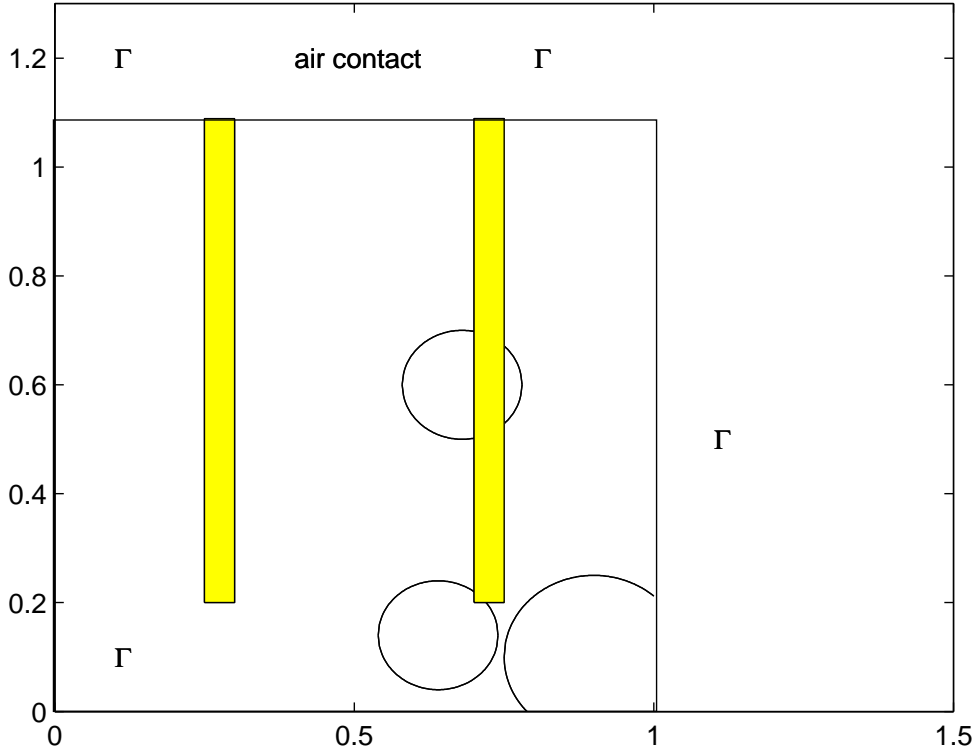}}
\caption{2D schematic representations of a Lipschitz
domain $\Omega$ (with $|\Omega|=1$)
 representing an electrolytic cell,  $\Gamma$ being the union of the 
recipient and air contact boundaries,
 $\Gamma_D$  representing the surface of two electrodes submerged in the
electrolyte, and $\Omega(x,r)=\Omega\cap B_r(x)$ denoting  the subset 
centered at three different points}\label{domain}
\end{figure}

Finally, we state the following local version that will be required in Section 
\ref{secg}. Here the boundary conditions do not play any role,
since one can localize the problem around any point by multiplying
with a suitable cut-off function, and paying for this
by a modified variational formulation.
\begin{proposition}\label{plocal}
Let $n\geq 2$,
 $a\in L^\infty(\Omega)$ satisfies $0< a_\#\leq a\leq a^\#$ a.e. in $\Omega$, 
 $x\in\Omega$, and $R>0$ be such that
 $|\Omega\cap \partial B_R(x)|>0$. If
$u\in H^1(\Omega(x,R))$ solves the local variational formulation
\begin{equation}\label{local}
\int_{\Omega(x,R)}a\nabla u\cdot\nabla v\mathrm{dz}=0,\quad\forall
v\in H^1_{\Omega\cap \partial B_R(x)}(\Omega(x,R)),
\end{equation}
then we have 
 \begin{equation}\label{cotalo}
\build{{\rm  ess} \sup}_{ \Omega(x,R/2)}|u|\leq 
2^{3n+2+{(3n)^2\over 4}}
S_{2n\over n+2}^{3n/2}\left(R+1+2\sqrt{a^\#\over a_\#} \right)^{3n\over 2}
{ \omega_n\over  R^{n/2}}
\left(
\int_{\Omega(x,R)}  u^2\mathrm{dz}\right)^{1\over 2}.
\end{equation}
\end{proposition}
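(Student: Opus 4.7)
My plan is to reproduce the De Giorgi--Stampacchia iteration from Propositions \ref{max0} and \ref{maxn} inside the local framework of (\ref{local}), exploiting the fact that the localization cut-off automatically produces admissible test functions. Fix $x\in\Omega$ and $0<r<R\leq R_0$, and use the Lipschitz cut-off $\eta$ of Proposition \ref{max0}: $\eta\equiv 1$ in $B_r(x)$, $\eta\equiv 0$ outside $B_R(x)$, $0\leq\eta\leq 1$, $|\nabla\eta|\leq 1/(R-r)$. For each $k\geq 0$, set $A(k,\rho)=\{z\in\Omega(x,\rho):|u(z)|>k\}$.

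I would then test (\ref{local}) with $v=\eta^2{\rm sign}(u)(|u|-k)^+$. Since $\eta$ vanishes on $\partial B_R(x)$, the function $v$ lies in $H^1_{\Omega\cap\partial B_R(x)}(\Omega(x,R))$ and is admissible. Expanding $\nabla v$ on $A(k,R)$ and absorbing the cross term $2a\eta(|u|-k)^+{\rm sign}(u)\nabla u\cdot\nabla\eta$ by Young's inequality, exactly as in the proof of Proposition \ref{max0}, yields the Caccioppoli-type bound (\ref{nuak3}).

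The Sobolev step follows Proposition \ref{maxn}. Because $\eta(|u|-k)^+$ vanishes on $\Omega\cap\partial B_R(x)$, its extension by zero on $\Omega\setminus B_R(x)$ belongs to $W^{1,2n/(n+2)}(\Omega)$, so applying the $W^{1,q}$-Sobolev inequality on $\Omega$ with $q=2n/(n+2)$ (so $q^*=2$)---splitting the gradient of the product, inserting the Caccioppoli bound into the $\|\eta\nabla u\|_{2,A(k,R)}$ piece, using $\eta\leq 1$ and $|\nabla\eta|\leq 1/(R-r)$, and converting $L^{2n/(n+2)}$ into $L^2$ by H\"older with factor $|A(k,R)|^{1/n}$---produces, after the domination $1+1/(R-r)\leq(R+1)/(R-r)$,
\[
\||u|-k\|_{2,A(k,r)}\leq\frac{c}{R-r}\,|A(k,R)|^{1/n}\,\||u|-k\|_{2,A(k,R)},
\]
with $c=S_{2n/(n+2)}(R+1+2\sqrt{a^\#/a_\#})$. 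Introducing $\phi(h,\rho):=|A(h,\rho)|\,\||u|-h\|_{2,A(h,\rho)}$ and using $(h-k)|A(h,r)|^{1/2}\leq\||u|-k\|_{2,A(k,R)}$ for $h>k\geq 0$ delivers the recursion
\[
\phi(h,r)\leq\frac{c}{(R-r)(h-k)^{2/(3n)}}\,\phi(k,R)^{1+2/(3n)};
\]
Lemma~5.1 of \cite{stamp63-64} applied with $\gamma=1$, $\alpha=2/(3n)$, $\beta=1+2/(3n)$, $k_0=0$, $R=R_0$, $\sigma=1/2$, combined with $\phi(0,R_0)\leq\omega_n R_0^n\|u\|_{2,\Omega(x,R_0)}$, then yields (\ref{cotalo}).

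The main obstacle is the Sobolev step itself: because $u$ satisfies no Dirichlet-type condition on $\partial\Omega\cap B_R(x)$, the truncation $\eta(|u|-k)^+$ vanishes only on the spherical part of $\partial\Omega(x,R)$, making the $W^{1,q}_0$-Sobolev inequality used in Proposition \ref{max0} unavailable. One must therefore invoke the general $W^{1,q}$-Sobolev inequality on $\Omega$, which retains both an $L^q$ and a gradient $L^q$ contribution on the right-hand side, and this additional $L^q$ term is precisely what generates the additive $R$ in the constant $c$.
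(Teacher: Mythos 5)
Your proposal is correct and follows essentially the same route the paper takes (which itself simply points back to Propositions~\ref{max0} and~\ref{maxn} with $k_0=0$): the localized Caccioppoli bound (\ref{nuak3}) from testing with $\eta^2{\rm sign}(u)(|u|-k)^+$, the full $W^{1,2n/(n+2)}$-Sobolev inequality in place of the zero-trace version because $u$ has no Dirichlet data on $\partial\Omega\cap B_R(x)$, the H\"older conversion producing $|A(k,R)|^{1/n}$, the domination $1+1/(R-r)\le(R+1)/(R-r)$ that injects the additive $R$ into $c$, and the Stampacchia iteration with $\alpha=2/(3n)$, $\beta=1+2/(3n)$, $\sigma=1/2$. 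Your extra remark that one may extend $\eta(|u|-k)^+$ by zero to all of $\Omega$ before applying the Sobolev inequality is a slightly cleaner justification than the paper's terse ``available on $A(x,R)$'' but amounts to the same step.
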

\begin{proof} 
First we
argue as in Proposition \ref{max0}, with   $k_0=0$.
The validity of the properties (\ref{aalfa}) and (\ref{nuak3}) remain.
The application of the $W^{1,2n/(n+2)}$-Sobolev inequality is available
for $\eta(|u|-k)^+\in W^{1,2n/(n+2)}(A(x,R))$.
 Thus, we conclude the proof of 
Proposition \ref{plocal} as in the proof of 
Proposition \ref{maxn}.
\end{proof}

\section{Green kernels}
\label{secg}

In this Section, we reformulate some properties of the Green kernels.
\begin{defn}\label{def2}
For each $x\in\Omega$, we say that $E$ is a Green kernel associated
 to (\ref{omega})-(\ref{gama}), 
if it solves 
\begin{equation}\label{pbud}
 \nabla\cdot(a\nabla E(x,\cdot))=\delta_x,
 \end{equation}
 where $\delta_x$ is the Dirac delta function at the point
$x$, in the following sense: there is $q>1$
such that $E$ verifies the variational formulation
\begin{equation}\label{varf}
\int_{\Omega}     a(y)\nabla_y E(x,y)\cdot
\nabla_y v (y)\mathrm{dy}=v(x),\quad\forall v\in V_q.
\end{equation}
If $|\Gamma_D|>0$, we call it the Green function,
otherwise we call it simply the Neumann function  (also called
 Green function for the Neumann problem or Green function of the second kind),
 and we write  $E=G$ and $E=N$, respectively.
\end{defn}

The existence of the Green function $G$ verifying
\begin{equation}\label{defgreen}
G(x,y)=\lim_{\rho\rightarrow 0}
G^\rho(x,y),\quad
\forall x\in\Omega, \ \mbox{a.e. }y\in\Omega, \ x\not=y,
\end{equation}
is standard if $n>2$  (see for instance \cite{gw,lsw}),
with $G^\rho(x,\cdot)\in H^1_0(\Omega)$ being the unique solution to
\begin{equation}\label{pbgreen}
\int_{\Omega}     a\nabla G^\rho\cdot
\nabla v \mathrm{dy}
={1\over |B_\rho(x)|}\int_{B_\rho(x)}v \mathrm{dy},
\end{equation}
for all $v\in H^1_0(\Omega) $,
for any $x\in\Omega$, and $\rho>0$ such that $B_\rho(x)\subset\Omega$.
Moreover, $G$ satisfies, for some positive 
constant $C(n)$, and $n>2$
 \cite[Theorem 1.1]{gw},
\[
G(x,y)\leq  C(n)(1+\log(a^\#/a_\#)){(a^\#)^{(n-2)/2}\over (a_\#)^{n/2}|x-y|^{n-2}}.
\] 
 In order to explicit the estimates
and simultaneously to extend to $n=2$
and a mixed boundary value problem, let us build the 
Green kernels for $n\geq 2$.
\begin{proposition}\label{green}
Let  $n\geq 2$,  $1\leq  q<n/(n-1)$,
 and $a$ be a measurable (and bounded) function defined in
$\Omega$ satisfying $0<a_\#\leq a\leq a^\#$.
Then, for each $x\in\Omega$ and any $r>0$
such that   $r< {\rm dist}(x,\partial\Omega)$,
 there exists a unique  Green function $G=
G(x,\cdot)\in W^{1,q}_{\Gamma_D}(\Omega)\cap H^1(\Omega \setminus B_r(x))$ 
 according  to Definition \ref{def2},
and  enjoying the following estimates
\begin{eqnarray}
\label{g1}
\|\nabla G\|_{q,\Omega} \leq 
C_1(\Omega,n,q) A +C_2 ( n,q,A);\\
 \| G\|_{qn/(n-q),\Omega} \leq S_{q}\left(
C_1(\Omega,n,q)A+C_2 ( n,q,A)\right),\label{gg}
\end{eqnarray}
with $A= \sqrt{2/ a_\#}$, and
the constants $C_1(\Omega,n,q) $ and $C_2(n,q,A) $ being explicitly 
given in Proposition \ref{W1q}.
Moreover, $G(x,y)\geq 0$ a.e.  $x,y\in \Omega$, and
\begin{equation}
\label{gmax}
|G(x,y)|\leq C(a_\#) \left({\delta(\Omega)\over 2}+1
+2\sqrt{a^\#\over a_\#} \right)^{3n/2}
 |x-y|^{1-n/q},
\end{equation}
for a.e. $ x,y\in \Omega$ such that $x\not= y$, where
\[C(a_\#)=2^{3n+1+{n\over q}+{(3n)^2\over 4}}  S_{2n\over n+2}^{3n/2}\omega_n^{
{3\over 2}+{1\over n}-{1\over q}}
S_{q}\left({
C_1(\Omega,n,q) A}+C_2 ( n,q,A)\right)
.\]
\end{proposition}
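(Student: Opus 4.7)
The plan is to build $G(x,\cdot)$ as a weak limit of approximating solutions modelled on (\ref{pbgreen}), adapted to the mixed boundary condition via the existence theory already established. For each $\rho>0$ with $B_\rho(x)\subset\Omega$, I would set $f_\rho := |B_\rho(x)|^{-1}\chi_{B_\rho(x)}$ and invoke Proposition \ref{exist} to obtain the unique $G^\rho(x,\cdot)\in H^1_{\Gamma_D}(\Omega)$ solving
\[
\int_\Omega a\nabla G^\rho \cdot \nabla v \,\mathrm{dy} = \frac{1}{|B_\rho(x)|}\int_{B_\rho(x)} v \,\mathrm{dy},\qquad \forall v\in H^1_{\Gamma_D}(\Omega).
\]
Since $\|f_\rho\|_{1,\Omega}=1$ and $\{f_\rho\}$ approximates $\delta_x$ exactly as in the proof of Proposition \ref{dirac1}, the SOLA argument from Proposition \ref{W1q} produces the uniform-in-$\rho$ bound $\|\nabla G^\rho\|_{q,\Omega}\le C_1(\Omega,n,q)A+C_2(n,q,A)$ with $A=\sqrt{2/a_\#}$ and $\varkappa=2$, for every $1\le q<n/(n-1)$. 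Extracting a subsequence weakly convergent in $W^{1,q}(\Omega)$ then produces $G(x,\cdot)\in W^{1,q}_{\Gamma_D}(\Omega)$ satisfying (\ref{varf}); the passage to the limit on the right-hand side relies on $W^{1,q'}(\Omega)\hookrightarrow C(\bar\Omega)$ for $q'>n$.

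Estimate (\ref{g1}) then follows from weak lower semicontinuity and (\ref{gg}) is immediate from (\ref{sobs}). Non-negativity is obtained at the approximating level by testing with $-(G^\rho)^-\in H^1_{\Gamma_D}(\Omega)$ and exploiting $f_\rho\ge 0$, which forces $\|\nabla(G^\rho)^-\|_{2,\Omega}=0$; the property passes to the limit. Uniqueness is inherited from the SOLA uniqueness recalled in the remarks following Proposition \ref{W1q}. For the interior $H^1$-regularity away from $x$, I would fix $r<\mathrm{dist}(x,\partial\Omega)$ and, for $\rho<r/2$, test (\ref{pbgreen}) with $\eta^2 G^\rho$ where $\eta\in W^{1,\infty}(\mathbb{R}^n)$ vanishes on $B_{r/2}(x)$ and equals $1$ outside $B_r(x)$; the right-hand side then vanishes, and a Caccioppoli-type manipulation (after first bootstrapping the uniform $L^{qn/(n-q)}$-bound on $G^\rho$ to a local $L^2$-bound away from $x$) yields a uniform bound on $\|\nabla G^\rho\|_{2,\Omega\setminus B_r(x)}$, which passes to the limit.

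The step I expect to be the main obstacle is the pointwise bound (\ref{gmax}). For $y\ne x$, I would set $r:=|x-y|$; since $x\notin\bar B_r(y)$, the limit $G(x,\cdot)$ solves the homogeneous local formulation (\ref{local}) on $\Omega(y,r)$, and the vanishing of $G(x,\cdot)$ on $\Gamma_D\cap B_r(y)$ is compatible with the DeGiorgi test functions built inside the proof of Proposition \ref{plocal} (so that proposition applies even when $B_r(y)$ meets $\Gamma_D$). Applying it with $R=r$ gives
\[
\build{{\rm ess}\sup}_{\Omega(y,r/2)}|G(x,\cdot)|\le 2^{3n+2+(3n)^2/4}\,S_{2n/(n+2)}^{3n/2}\bigl(\delta(\Omega)/2+1+2\sqrt{a^\#/a_\#}\bigr)^{3n/2}\omega_n r^{-n/2}\|G(x,\cdot)\|_{2,\Omega(y,r)},
\]
where $r\le\delta(\Omega)/2$ has been used to replace the $R$-dependent factor by a global one. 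A Hölder step then converts $\|G(x,\cdot)\|_{2,\Omega(y,r)}$ into the global $L^{qn/(n-q)}$-norm already controlled by (\ref{gg}), producing the measure factor $|B_r(y)|^{1/2-(n-q)/(qn)}=\omega_n^{1/2+1/n-1/q}r^{n/2-(n-q)/q}$; combining the powers of $r$ gives the exponent $1-n/q$ and the $\omega_n$'s combine to $\omega_n^{3/2+1/n-1/q}$, reconstructing the stated constant $C(a_\#)$. The delicate point is to justify this Hölder step for the full range $1\le q<n/(n-1)$: when $q$ is close to $1$ one has $qn/(n-q)<2$, so the comparison between $L^2$ and $L^{qn/(n-q)}$ on $B_r(y)$ cannot be made directly and must be chained through an intermediate Sobolev exponent.
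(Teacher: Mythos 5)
Your construction follows the paper's proof in all essentials: approximate $\delta_x$ by $f_\rho=|B_\rho(x)|^{-1}\chi_{B_\rho(x)}$, invoke Proposition \ref{exist} for $G^\rho\in H^1_{\Gamma_D}(\Omega)$, obtain the uniform $W^{1,q}$-bound from Proposition \ref{W1q} with $\varkappa=2$, pass to a weak limit to get (\ref{g1})--(\ref{gg}), prove nonnegativity at the approximating level (the paper tests with $G^\rho-|G^\rho|$, which is equivalent to your choice of $-(G^\rho)^-$), and deduce (\ref{gmax}) from the local maximum principle of Proposition \ref{plocal} followed by H\"older and (\ref{gg}). Your cut-off/Caccioppoli argument for the uniform local $H^1$-bound away from $x$ actually supplies a step the paper leaves implicit after stating only the $\rho$-dependent bound (\ref{grho2}).

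There is, however, a concrete geometric error in your treatment of (\ref{gmax}). You set $r:=|x-y|$ and assert $x\notin\bar B_r(y)$; but $x\in\partial B_r(y)\subset\bar B_r(y)$. This is not a cosmetic slip: $G(x,\cdot)$ is not in $H^1$ on any set whose closure contains $x$, since $\nabla_z G(x,z)\sim|x-z|^{1-n}$ near $z=x$ and $\int_0^\epsilon t^{2(1-n)}t^{n-1}\,\mathrm{dt}$ diverges for every $n\geq 2$, consistent with (\ref{grho2}). Hence $G(x,\cdot)\notin H^1(\Omega(y,r))$ with your choice of $r$, and the hypothesis of Proposition \ref{plocal} fails. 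The paper instead takes $R=|x-y|/2$, which keeps $\bar B_R(y)$ at positive distance from the singularity, and additionally gives $R\leq\delta(\Omega)/2$, which is exactly how the factor $\left(\delta(\Omega)/2+1+2\sqrt{a^\#/a_\#}\right)^{3n/2}$ and the power $2^{n/q-1}$ (from $R^{1-n/q}=2^{n/q-1}|x-y|^{1-n/q}$) enter the stated $C(a_\#)$. With $r=|x-y|$ you would get a different constant even if the argument could be repaired.

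Your concern about the H\"older step is well-placed and coincides with the paper's own restriction: the paper invokes $\|G\|_{2,\Omega(y,R)}\leq|\Omega(y,R)|^{1/2-(n-q)/(qn)}\|G\|_{qn/(n-q),\Omega}$ and explicitly requires $qn/(n-q)\geq 2$, i.e.\ $q\geq 2n/(n+2)$; it does not chain through an intermediate exponent as you suggest. Note that the window $q\in[2n/(n+2),\,n/(n-1))$ is nonempty only when $2(n-1)<n+2$, i.e.\ $n\leq 3$; for $n\geq 4$ the upper bound $n/(n-2)$ on the Sobolev exponent $qn/(n-q)$ is at most $2$, so no admissible $q$ makes the H\"older step run. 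This is a genuine limitation of the argument shared by your proposal and the paper's proof; it is not something your vague ``chain through an intermediate Sobolev exponent'' remark resolves, and you should either restrict the claim or supply a different route to the pointwise bound in high dimensions.
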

\begin{proof}
For any $x\in\Omega$, and $\rho>0$ such that $B_\rho(x)\subset\subset\Omega$,
the existence and uniqueness of 
$G^\rho=G^\rho(x,\cdot)\in H^1_{\Gamma_D}(\Omega)$
solving (\ref{pbgreen}), 
for all $v\in H^1_{\Gamma_D}(\Omega) $, are due to
 Proposition \ref{exist} with  ${\bf f}={\bf 0}$ a.e. in $\Omega$,
$g,h=0$ a.e. on, respectively, $\Gamma_D$ and $\Gamma$,
 and $f=\chi_{B_\rho(x)}/|B_\rho(x)| $ belonging to
$ L^{2n/(n+2)}(\Omega)$ if 
$n>2$, and to $L^{2}(\Omega)$ if $n=2$.
 Moreover, (\ref{dircota}) reads
\begin{equation}\label{grho2}
\|\nabla G^\rho\|_{2,\Omega}\leq{1\over a_\#}\times\left\{
\begin{array}{ll}
 S_2 \omega_n^{1/n-1/2}\rho^{1-n/2}&\mbox{ if } n>2\\
  |\Omega |^{1/2} 
(2\omega_2\rho^2)^{-1/2}&\mbox{ if } n=2
 \end{array}\right. .
\end{equation}
Therefore, for any
 $r>0$ such that $B_r(x)\subset\subset\Omega$,
 there exists $G\in H^1(\Omega\setminus B_r(x))$
such that
\[
G^\rho\rightharpoonup G\quad \mbox{in } H^1(\Omega\setminus B_r(x))\quad
\mbox{ as $\rho\rightarrow  0^+$.}
\]

In order to $G$ correspond to the well defined in (\ref{defgreen}),
the $W^{1,q}$-estimate  (\ref{g1}) is true for $G^\rho$ due to
(\ref{cota1qv}) with $\varkappa=2$, by
applying Proposition \ref{W1q} with ${\bf f}={\bf 0}$,
$g,h=0$, and $f=\chi_{B_\rho(x)}/|B_\rho(x)|\in L^{1}(\Omega)$.
 Then, we can extract a subsequence of $G^\rho,$ still denoted by
$G^\rho,$ 
weakly converging to $G$ in $W^{1,q}(\Omega)$
as $\rho$ tends to $0$, with $G\in V_q$
 solving (\ref{varf}) for all $v\in V_{q'}$.
A well-known property of passage to the weak limit implies (\ref{g1}). 
 The estimate (\ref{gg}) is consequence of the Sobolev embedding
 with continuity constant given in (\ref{sobs}).
 
In order to prove the nonnegativeness assertion, first
calculate
\[
\int_{\Omega}     a|\nabla (G^\rho-|G^\rho|)|^2\mathrm{dy}
={2\over |B_\rho(x)|}
\left(\int_{B_\rho(x)}G^\rho \mathrm{dy}-\int_{B_\rho(x)}|G^\rho| 
\mathrm{dy}\right)\leq 0.
\]
Then, $G^\rho=|G^\rho|$, and by passing to the limit as $\rho$ tends to $0$,
the nonnegativeness claim holds.

For each $x,y\in\Omega$ such that $x\not= y$, we may take  $r<R=|x-y|/2$
such that $G(x,\cdot)\in H^1(\Omega(y,R))$ verifies $\nabla\cdot(a\nabla G)=0$
in $\Omega(y,R)$.
Applying (\ref{cotalo}),  followed by the H\"older inequality
since $qn/(n-q)\geq 2$ means $q\geq 2n/(n+2)$,
we obtain
\begin{eqnarray*}
|G(x,y)|\leq  {C\over R^{n/2}}
\left(
\int_{\Omega(y,R)}  G^2(x,z)\mathrm{dz}\right)^{1/2}\\
\leq C\omega_n^{
{1/ 2}+{1/ n}-{1/ q}}
R^{n\left({1/ 2}+{1/n}-{1/q}\right)-{n/ 2}}
\| G\|_{qn/(n-q),\Omega},
\end{eqnarray*}
with $C=2^{3n+2+{(3n)^2/ 4}} \left(
\partial (\Omega)/2+1+2\sqrt{a^\#
/ a_\#} \right)^{3n/2} S_{2n/( n+2)}^{3n/2}\omega_n$.

Hence, using (\ref{gg}) we conclude (\ref{gmax}),
which completes the proof of Proposition \ref{green}.
\end{proof}

\begin{remark}
Since $qn/(n-q)\rightarrow n/(n-1)$ as $q\rightarrow 1^+$,
and $qn/(n-q)\rightarrow n/(n-2)$ as $q\rightarrow [n/(n-1)]^-$,
the integrability exponent of $G$ in (\ref{gg}) obeys
$n/(n-1)<qn/(n-q)< n/(n-2)$. In conclusion, Proposition \ref{green}
ensures that $G\in L^p(\Omega)$
for any $p\in [1, n/(n-2)[$.
\end{remark}

For each $ x\in\Omega$, the Neumann function 
is defined as $N=G+w$ being the solution of the regularity
 problem \cite[Definition 2.5]{kp},
 where $G\in H^1_0(\Omega)$ is the Green function  
solving (\ref{pbgreen})
and $w\in H^1(\Omega)$, with mean value zero over $\partial\Omega$,
is the unique solution to the variational formulation  \cite[Lemma 2.3]{kp}
\begin{equation}\label{pbneum}
\int_{\Omega}     a\nabla w\cdot
\nabla v \mathrm{dy}
=-\hspace*{-0.4cm}\int_{\partial\Omega}v \mathrm{ds_y}-
-\hspace*{-0.4cm}\int_{\partial\Omega}v \mathrm{d}\omega_\mathrm{y}^x,
\quad\forall v\in V_2(\partial\Omega).
\end{equation}
Here,  $\omega^x$ is the L-harmonic measure \cite{dahl},
i.e. it is unique probability measure on $\partial\Omega$
such that 
\[
L_g(x)=\int_{\partial\Omega}g \mathrm{d}\omega_\mathrm{y}^x,
\]
due to  the Riesz representation theorem applied to the continuous linear 
functional $L:g\in C(\partial\Omega)\rightarrow L_g(x)\in C(\bar\Omega)$,
where $L_g(x)$ is the solution to the Dirichlet problem (\ref{omega})
with ${\bf f}={\bf 0}$ and $f=0$, 
and (\ref{gama}) with $\Gamma_D=\partial\Omega$.
The question of solvability of the
regularity problem  is assigned by the gradient of  the solution
having nontangential limits at almost every point of the boundary \cite{efv,kp}.

\begin{remark}
For each $x\in\Omega$, $N(x,\cdot)$ admits an extension $\widetilde N(x,\cdot)$ across $\partial\Omega$ (cf. 
\cite[Lemmas 2.9 and 2.11]{kp}) to the domain $\widetilde\Omega$ which is such that
\[y\in\partial\widetilde\Omega\Leftrightarrow y\in\mathbb R^n\setminus\bar\Omega:\quad
y=y^*+(y^*-{\mathcal T}(y^*)), \quad\mbox{ for some }y^*\in\partial\Omega,
\]
where $\mathcal T$ is the homothety function that reduces $\partial\Omega$ into its half, i.e.
the homothetic boundary with measure $|\partial\Omega|/2$.
 That is, each $y\in\widetilde\Omega\setminus\bar\Omega$
is the reflection of $\mathcal T(y^\delta)$ across
$\partial\Omega$ in the following sense: 
\[y=y^*+(y^*-y^\delta){y^*-\mathcal T(y^*)\over |y^*-\mathcal T(y^*)|},\]
where $y^\delta\in\Omega$ is such that
 $y^\delta=y^*-\delta(y^*-\mathcal T(y^*))/|y^*-\mathcal T(y^*)|$, for some  $0<\delta<\delta(\Omega)$.
\end{remark}
Since our concern is on
 weak solutions to (\ref{omega})-(\ref{gama}) in accordance
with Definition \ref{def1}, 
 we reformulate for $n\geq 2$ the existence result due to  Kenig 
and Pipher on solutions to the Neumann problem in
bounded Lipschitz domains if $n>2$, with no information of its boundary behavior.
\begin{proposition}\label{neum}
Let  $n\geq 2$, $1\leq q<n/(n-1)$,
 and $a$ be a measurable (and bounded) function defined in
$\Omega$ satisfying $0< a_\#\leq a\leq a^\#$.
Then, for each $x\in\Omega$,
 there exists a Neumann function $N=N(x,\cdot)\in V_{q}$ solving (\ref{pbud})
 that satisfies (\ref{g1})-(\ref{gg}), and (\ref{gmax}),
with  $A=2/\sqrt{a_\#} $.
\end{proposition}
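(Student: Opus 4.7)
The argument mirrors Proposition~\ref{green}, replacing Proposition~\ref{exist} by Proposition~\ref{neumann} at the approximation step and, accordingly, the SOLA constant $\varkappa=2$ by $\varkappa=4$. First I fix $x\in\Omega$ and, for each $\rho>0$ with $\overline{B_\rho(x)}\subset\Omega$, invoke Proposition~\ref{neumann} with $\mathbf{f}=\mathbf{0}$, $h=0$ on $\Gamma$, and $f=\chi_{B_\rho(x)}/|B_\rho(x)|$ (which lies in $L^{2n/(n+2)}(\Omega)$ for $n>2$ and in $L^2(\Omega)$ for $n=2$) to obtain a unique approximate Neumann function $N^\rho(x,\cdot)\in V_2$ satisfying
\begin{equation*}
\int_\Omega a\,\nabla N^\rho\cdot\nabla v\,\mathrm{dy}=\frac{1}{|B_\rho(x)|}\int_{B_\rho(x)}v\,\mathrm{dy},\qquad\forall v\in V_2.
\end{equation*}

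Applying Proposition~\ref{W1q} to the same data with $\|f\|_{1,\Omega}=1$ and $\varkappa=4$ yields the $\rho$-uniform bound $\|\nabla N^\rho\|_{q,\Omega}\le C_1(\Omega,n,q)A+C_2(n,q,A)$ with $A=2/\sqrt{a_\#}$. By reflexivity, along a subsequence $N^\rho\rightharpoonup N$ in $V_q$; since $q'>n$ forces $V_{q'}\hookrightarrow C(\bar\Omega)$, the right-hand side of the approximate identity converges to $v(x)$ for every $v\in V_{q'}$, while the left-hand side passes to the limit by weak convergence. Hence $N\in V_q$ solves (\ref{varf}); the bound (\ref{g1}) follows from weak lower semicontinuity, and (\ref{gg}) from the Sobolev embedding $V_q\hookrightarrow L^{qn/(n-q)}(\Omega)$ with constant $S_q$.

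For the pointwise estimate (\ref{gmax}), I fix $x\neq y$ and set $R=|x-y|/2$, so that $x\notin\overline{B_R(y)}$. The $H^1$-estimate (\ref{grho2})---uniform in $\rho$ on $\Omega\setminus B_r(x)$ once $\rho<r<R$---passes to the weak limit to give $N(x,\cdot)\in H^1(\Omega(y,R))$ verifying the homogeneous local identity (\ref{local}) on $\Omega(y,R)$, since $\delta_x$ is supported outside $B_R(y)$. Proposition~\ref{plocal} then bounds the essential supremum on $\Omega(y,R/2)$ by a multiple of $R^{-n/2}\|N(x,\cdot)\|_{2,\Omega(y,R)}$, with the constant controlled by $\delta(\Omega)/2$ in place of $R$. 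Estimating the $L^2$-norm via H\"older's inequality combined with (\ref{gg}), and substituting $R=|x-y|/2$, produces (\ref{gmax}) exactly as in the closing lines of the proof of Proposition~\ref{green}.

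The principal analytic input is the passage $\varkappa=2\rightsquigarrow\varkappa=4$, which converts $A=\sqrt{2/a_\#}$ into $A=2/\sqrt{a_\#}$ and accounts for the factor $|v|\le 2$ available in the SOLA test functions of the Neumann case. The main bookkeeping obstacle is to confirm that Proposition~\ref{W1q} applies cleanly to the mean-zero test function adapted to the Neumann setting, reproducing the same constants $C_1(\Omega,n,q)$ and $C_2(n,q,A)$; the sign-based nonnegativity argument used for $G$ in Proposition~\ref{green} is not available here---$G^\rho-|G^\rho|$ is not admissible as a mean-zero test function---and indeed nonnegativity of $N$ is not claimed in the statement.
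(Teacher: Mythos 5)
Your proof follows the same route as the paper's: approximate by $N^\rho\in V_2$ via Proposition~\ref{neumann}, invoke Proposition~\ref{W1q} with $\varkappa=4$ for the $\rho$-uniform $W^{1,q}$-bound (giving $A=2/\sqrt{a_\#}$), pass to the weak limit, and obtain (\ref{gmax}) by applying Proposition~\ref{plocal} on $\Omega(y,R)$ with $R=|x-y|/2$ together with (\ref{gg}), exactly as in Proposition~\ref{green}. Your closing observation that the sign argument $G^\rho=|G^\rho|$ is unavailable in the mean-zero setting, and that nonnegativity is accordingly not asserted for $N$, is a correct and useful remark, though not part of the paper's (terser) proof.
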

\begin{proof}
For each $x\in\Omega$,  and $\rho>0$ such that $B_\rho(x)\subset\Omega$,
the existence of a unique Neumann function 
$N^\rho(x,\cdot)\in V_2$ solving (\ref{pbgreen}), 
for all $v\in V_2$,  is consequence of
 Proposition \ref{neumann} with  ${\bf f}={\bf 0}$,
$g,h=0$, and $f=\chi_{B_\rho(x)}/|B_\rho(x)| \in L^{t}(\Omega)$
for $t=2n/(n+2)$ if $n>2$, and any $t<2$ if $n=2$.
Arguing as in the proof of Proposition \ref{green},
$N^\rho$ belongs to $W^{1,q}(\Omega)$, uniformly for $x\in\Omega$,
 according to (\ref{cota1qv}) with $\varkappa= 4$. Therefore,
 we may pass to the limit as $\rho\rightarrow 0$,
finding $N\in V_{q}$ solving (\ref{pbud}).
The remaining estimates (\ref{gg})-(\ref{gmax}),
under $A=2/\sqrt{a_\#} $, are obtained
exactly as in the proof of Proposition \ref{maxn}.
\end{proof}

Hereafter,
 $\partial_{x_i}$ denotes the partial derivative $\partial/\partial x_i$. 
\begin{proposition}\label{ddg}
Let   $n\geq 2$, $1\leq q<n/(n-1)$,
 $E$ be the symmetric function that is either
the Green function $G$ or 
the Neumann function $N$
in accordance with Propositions \ref{green} and  \ref{neum}, respectively.
If $a\in L^\infty(\Omega)$ verifies $0< a_\#\leq a\leq a^\#$ a.e. in $\Omega$,
 then for every $i=1,\cdots,n$
 $\partial_{x_i} E(x,\cdot)\in V_q\cap L^{\infty}(\Omega)$ is
 uniformly bounded for $x\in\Omega$. In particular, it
 satisfies (\ref{g1})-(\ref{gg}), and (\ref{gmax}),
 where  $A=\sqrt{\varkappa /a_\#}$,
with $\varkappa=2$  if $|\Gamma_D|>0$, and $\varkappa=4$  if $|\Gamma_D|=0$.
 \end{proposition}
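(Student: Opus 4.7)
The plan is to imitate the construction of $E$ given in Propositions \ref{green} and \ref{neum}, but at the level of the parameter derivative, and then to apply the SOLA technique to the differentiated approximating problem.

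First I would fix $x\in\Omega$ and $\rho>0$ with $B_\rho(x)\subset\subset\Omega$, and take the approximate kernel $E^\rho(x,\cdot)$ solving (\ref{pbgreen}). Since $f^\rho(y)=\chi_{B_\rho(x)}(y)/|B_\rho(x)|$ is a pure translate of a fixed bump, the map $x\mapsto E^\rho(x,\cdot)$ is differentiable and $\partial_{x_i}f^\rho=-\partial_{y_i}f^\rho$. Differentiating (\ref{pbgreen}) in $x_i$ and integrating by parts in $y$ (the boundary terms vanish because $f^\rho$ is compactly supported in $\Omega$), I obtain that $U_i^\rho(y):=\partial_{x_i}E^\rho(x,y)$ solves
\[
\int_\Omega a(y)\,\nabla_y U_i^\rho\cdot\nabla_y v\,\mathrm{dy}=\int_\Omega {\bf f}^\rho\cdot\nabla v\,\mathrm{dy},\qquad{\bf f}^\rho:=e_i\,\chi_{B_\rho(x)}/|B_\rho(x)|,
\]
for every admissible $v$. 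Existence of $U_i^\rho$ in $H^1_{\Gamma_D}(\Omega)$ or $V_2$ is then delivered by Proposition \ref{exist} or \ref{neumann}, since ${\bf f}^\rho\in L^\infty\subset L^2$, while $f=0$ and $h=0$. Note that $\|{\bf f}^\rho\|_{1,\Omega}=1$ uniformly in $\rho$.

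The central step is to derive a $W^{1,q}$-bound on $U_i^\rho$ that is independent of $\rho$ and matches the right-hand side of (\ref{g1}) with $A=\sqrt{\varkappa/a_\#}$. I would reproduce the SOLA argument of Proposition \ref{W1q}, testing with $v=\mathrm{sign}(U_i^\rho)(1-(1+|U_i^\rho|)^{-s})$ in the mixed case (with the corresponding mean-value correction in the Neumann case) and the same choice of $s=(n+q-nq)/(n-q)$. Since $\|{\bf f}^\rho\|_{1,\Omega}=1$ uniformly, the ${\bf f}^\rho$-contribution is handled exactly in the spirit of Proposition \ref{dirac1}: the weight $(1+|U_i^\rho|)^{-(s+1)}$ in the test function absorbs the concentration of ${\bf f}^\rho$ near $x$, where $U_i^\rho$ is large, so that the bound depends only on the total mass of ${\bf f}^\rho$ and not on $\|{\bf f}^\rho\|_{2,\Omega}$ (which blows up as $\rho\to 0$). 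This gives (\ref{g1}) for $U_i^\rho$, uniformly in $\rho$ and in $x$, with the claimed constants $C_1,C_2$ and $A$; (\ref{gg}) then follows by the Sobolev embedding (\ref{sobs}), as in Proposition \ref{green}.

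Next I would extract a subsequence $U_i^\rho\rightharpoonup U_i$ in $W^{1,q}(\Omega)$, verify $U_i\in V_q$ and identify $U_i=\partial_{x_i}E(x,\cdot)$ by passing to the limit simultaneously in the variational equations for $E^\rho$ and for $U_i^\rho$, using uniqueness of the SOLA solution. The estimates (\ref{g1})-(\ref{gg}) transfer by lower semicontinuity of the norms. For the pointwise bound (\ref{gmax}), I would fix $y\neq x$ and $R<|x-y|/2$: for all $\rho<R$, ${\bf f}^\rho$ vanishes on $B_R(y)$, so $U_i^\rho$ satisfies the homogeneous local formulation (\ref{local}) on $\Omega(y,R)$. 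Proposition \ref{plocal} then yields
\[
\underset{\Omega(y,R/2)}{\mathrm{ess\,sup}}|U_i^\rho|\le \frac{C}{R^{n/2}}\Bigl(\int_{\Omega(y,R)}(U_i^\rho)^2\,\mathrm{dz}\Bigr)^{1/2},
\]
and estimating the $L^2$-norm by Hölder and the already established $L^{qn/(n-q)}$-bound (\ref{gg}), exactly as in the proof of Proposition \ref{green}, produces (\ref{gmax}) with the constant $C(a_\#)$ written in terms of $A=\sqrt{\varkappa/a_\#}$. Passing to the limit $\rho\to 0$ yields the same pointwise bound for $\partial_{x_i}E$.

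The main obstacle is the SOLA step for the vector datum ${\bf f}^\rho$: its $L^2$-norm diverges as $\rho\to 0$, so the estimate in Proposition \ref{W1q} cannot be invoked verbatim. The argument has to be run carefully to show that, after Cauchy-Schwarz against the weight $(1+|U_i^\rho|)^{-(s+1)/2}$, the resulting integral $\int|{\bf f}^\rho|^2(1+|U_i^\rho|)^{-(s+1)}\,\mathrm{dy}$ is controlled uniformly in $\rho$ by the total mass $\|{\bf f}^\rho\|_{1,\Omega}=1$, thanks to the matching singular profile of $U_i^\rho$ near $x$. This is precisely the mechanism that makes the bound independent of $\rho$ and gives the same constant $A=\sqrt{\varkappa/a_\#}$ as for $E$ itself, so that no extra terms enter (\ref{g1}).
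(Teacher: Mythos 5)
Your blueprint matches the paper's: differentiate $E^\rho$ in the parameter $x$, obtain a $\rho$-uniform $W^{1,q}$-bound, pass to the weak limit, and then recycle Proposition \ref{plocal} together with the argument of Proposition \ref{green} for the pointwise estimate (\ref{gmax}). Your integration by parts, which recasts the distributional datum $\partial_{x_i}[\chi_{B_\rho(x)}/|B_\rho(x)|]$ as a divergence of the vector field ${\bf f}^\rho=\chi_{B_\rho(x)}/|B_\rho(x)|\,e_i$ with $\|{\bf f}^\rho\|_{1,\Omega}=1$, is in fact more precise than the paper's terse assertion that this datum ``is a Dirac delta function'' followed by an appeal to Proposition \ref{dirac1}, and you are right that such an appeal is not verbatim admissible.

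However, the step you single out as critical --- a $\rho$-uniform bound on $\|\nabla U_i^\rho\|_{q,\Omega}$ --- is a genuine gap that your proposed remedy does not close. Propositions \ref{W1q} and \ref{dirac1} control vector data only through $\|{\bf f}\|_{2,\Omega}$, which for ${\bf f}^\rho$ is of order $\rho^{-n/2}$, and scalar data through $\|f\|_{1,\Omega}$; nothing in the paper proves an estimate in terms of $\|{\bf f}\|_{1,\Omega}$ alone. Your heuristic that the weight $(1+|U_i^\rho|)^{-(s+1)}$ absorbs the concentration of ${\bf f}^\rho$ fails quantitatively: granting the favorable asymptotic $|U_i^\rho|\sim\rho^{1-n}$ on $B_\rho(x)$, the Cauchy--Schwarz companion term behaves like
\[
\int_{B_\rho(x)}|{\bf f}^\rho|^2(1+|U_i^\rho|)^{-(s+1)}\,\mathrm{dy}\ \sim\ \rho^{(n-1)(s+1)-n},
\]
and with the choice $s=(n+q-nq)/(n-q)$ from Proposition \ref{W1q} the exponent equals $n(n-2)(1-q)/(n-q)$, which is strictly negative for every $q>1$ when $n>2$; for $n=2$ the paper's choice $s=2-q$ gives exponent $1-q<0$. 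So the integral diverges as $\rho\to 0$ on precisely the range of $q$ you need, and the cancellation mechanism you invoke does not exist. The same difficulty is present, though unacknowledged, in the paper's own proof; you have diagnosed the obstacle correctly but have not overcome it.
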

\begin{proof} 
For each $x\in \Omega$, we may approximate $\partial_{x_i} E(x,\cdot)$ by
$\{\partial_{x_i} E^\rho\}_{\rho>0}$, where
$E^\rho=E^\rho(x,\cdot)\in V_{2}$ solves (\ref{pbgreen})
for every  $\rho>0$ such that $B_\rho(x)\subset\Omega$.
Since $\partial_{x_i} [\chi_{B_\rho (x)}/(\rho^n |B_1(0)|)]$ 
is a Dirac delta function,
Proposition \ref{dirac1} ensures that $\partial_{x_i} E^\rho$
verifies (\ref{g1}),  and
also (\ref{gg}) by the Sobolev inequality (\ref{sobs}), 
with  $A=\sqrt{\varkappa /a_\#}$,
where $\varkappa=2$  if $|\Gamma_D|>0$, and $\varkappa=4$  if $|\Gamma_D|=0$.
Consequently, (\ref{g1})-(\ref{gg}) hold, by passage to the weak limit.

To prove the estimate (\ref{gmax}) for $\partial_{x_i}E(x,\cdot)$,
 let us take $y\in\Omega$  such that  $R=|x-y|/2>0$. Thus, 
 $E(x,\cdot)\in H^1(\Omega(y,R))\cap V_q$ verifies $\nabla\cdot(a\nabla 
\partial_{x_i} E)=0$
in $\Omega(y,R)$, for every $i=1,\cdots ,n$.
Therefore,
 we proceed by using
the  argument already used in the  proof of  Proposition \ref{green},
with $G$ replaced by $\partial_{x_i}E$.
\end{proof} 

\begin{remark}
Notice that $q'>n$ implies that $E$ is not an admissible test function in
\[
 \int_{\Omega}     a(y)\nabla  \partial_{x_i}
E(x,y)\cdot\nabla v(y) \mathrm{dy}=  \partial_{x_i}
v(x),\quad\forall v\in V_{q'} ,
\]
for each $x\in\Omega$, and for every $i=1,\cdots ,n$,
which comes from Definition \ref{def2}, i.e. due to differentiate (\ref{varf})
 under the integral sign  in $x_i$.
We emphasize that for each $x\in\Omega$ and any $r>0$
such that   $r< {\rm dist}(x,\partial\Omega)$, the symmetric function
$E(x,\cdot)\in V_q\cap H^1(\Omega \setminus B_r(x))$ 
 verifies, by construction, the limit system
 of identities
 \[
\int_{\Omega} a\varphi^2 \nabla (\nabla_x E)(x,\cdot)\cdot\nabla v\mathrm{dz} =-2
\int_{\Omega}   a   \varphi v\nabla ( \nabla_x E)(x,\cdot)\cdot\nabla 
\varphi \mathrm{dz},
\] 
for any $\varphi\in W^{1,\infty}(\mathbb{R}^n)$
 such that supp$(\varphi)\subset \mathbb{R}^n\setminus
\overline{B_{\partial(\Omega)}(x)\cup B_{r}(x)}$, and
 for all $v\in V_1\cap  H^1(\Omega\cap \mathrm{supp}(\varphi))$.
\end{remark}

Next, 
we prove additional estimates for the derivative  of the weak solution to
(\ref{omega})
with ${\bf f}={\bf 0}$ and $f=0$, 
if we strengthen the hypotheses on
the regularity of the coefficient $a$. Indeed we proceed as in \cite{gw} where
the coefficient is assumed Dini-continuous to be enable 
to derive some more pointwise estimates for the derivative of the Green
kernels.
\begin{proposition}\label{propu}
Let $a\in L^\infty(\Omega)$ satisfy $0< a_\#\leq a\leq a^\#$ a.e. in $\Omega$.
If  there exists a function $\omega:[0,\infty[\rightarrow [0,\infty[$ such that,
a.e. $x, y\in\Omega$,
\begin{equation}
|a(x)-a(y)|\leq \omega(|x-y|),
\quad
0< C_a:=\int_0^1{\omega(t)\over t}\mathrm{dt}<\infty\label{ar}
\end{equation}
 then for each $x\in\Omega$, and $R>0$,
any function $u\in W^{1,1}(\Omega)$ solving
\begin{equation}\label{pbua}
 \nabla\cdot(a\nabla u)=0\mbox{ in }\Omega(x,R),
 \end{equation}
in the sense of distributions,  enjoys a.e. $y\in\Omega$,
\begin{equation}\label{nu}
|\nabla u(y)|\leq
{a_\#\delta(\Omega)\over C_an\omega _{n}} \left({4\pi\over 3}+n\right)
 {1\over |x-y| }
\int_{B_{d}(y)}{|u(z)|\over |y-z|^n}\mathrm{dz},
\end{equation}
where
\begin{equation}
d=\left\{\begin{array}{ll}
|x-y|/2,&\mbox{if }|x-y|<2r\\
|x-y|/\nu,&\mbox{if }|x-y|=\nu r
\end{array}\right.
\label{defd}\end{equation}
for some  $2\leq\nu<\delta(\Omega)/r$ and 
 $0<r<\min\{1,\delta(y)\}$ with $\delta(y):={\rm dist}(y,\partial\Omega)$.
\end{proposition}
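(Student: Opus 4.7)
My plan is to adapt the freezing-of-coefficients technique of Gr\"uter--Widman \cite{gw}, exploiting the Dini condition (\ref{ar}) to control the error produced by replacing $a$ with the constant $a(y)$.

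The first task is to verify that the ball $B_d(y)$, with $d$ as defined in (\ref{defd}), is contained in $\Omega(x,R)$, so that $u$ satisfies $\nabla\cdot(a\nabla u)=0$ on it. In both cases of (\ref{defd}) the choice of $d$ is designed so that $B_d(y)\subset \Omega(x,R)$ while remaining comparable to $|x-y|$: if $|x-y|<2r$ then $d=|x-y|/2$ places $y$ near $x$ with the ball safely inside $\Omega(x,R)$; if $|x-y|=\nu r$ with $\nu\geq 2$ then $d=|x-y|/\nu=r<\delta(y)$ keeps $B_d(y)$ inside the interior set where the equation holds. The condition $r<\min\{1,\delta(y)\}$ guarantees in particular that $\omega$ is being integrated on an interval where the Dini hypothesis applies.

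Next, I would freeze the coefficient at $y$: writing $a=a(y)+(a-a(y))$, the equation becomes
\begin{equation*}
a(y)\Delta u=-\nabla\cdot\bigl((a-a(y))\nabla u\bigr)\quad\text{in }B_d(y),
\end{equation*}
in the distributional sense. Representing $u$ on $B_d(y)$ via the Dirichlet Green function $G_0$ of the Laplacian and its Poisson kernel $P$ gives, for $\zeta\in B_{d/2}(y)$,
\begin{equation*}
u(\zeta)=\int_{\partial B_d(y)}P(\zeta,z)u(z)\,\mathrm{ds}(z)-\frac{1}{a(y)}\int_{B_d(y)}\nabla_z G_0(\zeta,z)\cdot(a(z)-a(y))\nabla u(z)\,\mathrm{dz}.
\end{equation*}
Differentiating in $\zeta$ and setting $\zeta=y$, the gradient of $u$ at $y$ is controlled by a Poisson-type boundary integral plus a volume integral with the sharp kernel $|\nabla_\zeta\nabla_z G_0(y,z)|\lesssim |y-z|^{-n}$, weighted by $|a(z)-a(y)|\leq \omega(|y-z|)$. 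The Dini hypothesis (\ref{ar}) renders the volume contribution convergent through the factor $C_a$, while the boundary integral is converted into an interior weighted integral by a mean-value reduction that trades $\mathrm{ds}$ on $\partial B_d(y)$ for $\mathrm{dz}/|y-z|^n$ on $B_d(y)$.

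Collecting terms, the factor $1/|x-y|$ appears from the relation $d\asymp |x-y|$ given by (\ref{defd}), together with the prefactor $\delta(\Omega)$ arising when one bounds $d$ from above by $\delta(\Omega)/2$. The explicit constants $a_\#$, $n\omega_n$, and $4\pi/3+n$ emerge respectively from $1/a(y)\leq 1/a_\#$, the normalization $\Delta(c_n|z|^{2-n})=\delta_0$ with $c_n=1/(n(n-2)\omega_n)$, and the geometric split of the boundary contribution into radial and angular parts. The main obstacle I anticipate is producing the weighted kernel $1/|y-z|^n$ inside $\int_{B_d(y)}|u(z)|/|y-z|^n\,\mathrm{dz}$ rather than the cruder Cauchy-type bound $d^{-n-1}\int_{B_d(y)}|u|$: this requires exploiting the precise singularity of $\nabla^2 G_0$ and avoiding any $L^\infty$ estimate on $\nabla u$, so that the Dini-integral $C_a$ can be used once (not iterated) and the final constant is kept sharp.
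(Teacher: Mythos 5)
Your overall strategy (freeze the coefficient at $y$, write $a(y)\Delta u=-\nabla\cdot\bigl((a-a(y))\nabla u\bigr)$, represent $u$ via a Laplacian kernel, and exploit the Dini condition) is broadly the same spirit as the paper's, but there is a genuine gap at the central point. The volume integral produced by the freezing error has the form
\[
\int_{B_d(y)}\nabla_\zeta\nabla_z G_0(y,z)\cdot\bigl(a(z)-a(y)\bigr)\nabla u(z)\,\mathrm{dz},
\]
which contains $\nabla u$, not $u$. You flag the issue (``avoiding any $L^\infty$ estimate on $\nabla u$, so that $C_a$ can be used once'') but you never supply the mechanism, and without one the argument does not close: bounding $|\nabla u|$ by its supremum over $B_d(y)$ gives an inequality of the shape $|\nabla u(y)|\lesssim C_a\sup_{B_d(y)}|\nabla u|+\dots$, which is useless unless $C_a$ is small or you iterate. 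The paper's device is the Gr\"uter--Widman bootstrap: set $M=\sup_{z\in B_r(y)}|x-z|\,|\nabla u(z)|$ and assume $|x-y|\,|\nabla u(y)|>2bM$; inside the volume integral one then replaces $|\nabla u(z)|$ by $M/|x-z|$, the Dini bound (\ref{ardini}) produces exactly a factor $\leq C_a n\omega_n$, and the normalization $C_L$ in (\ref{gl}) is chosen so that this whole contribution is $\leq M$ and can be absorbed into the left-hand side $2M$. That weighted-sup trick is the load-bearing idea of the proof and must appear explicitly.

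A second, more technical divergence: the paper does not use the Dirichlet Green function of the ball together with a Poisson-kernel boundary term. Instead it multiplies the equation by $G_L(w,\cdot)\,\eta/a(y)$ where $G_L$ is the \emph{free-space} fundamental solution (with the tuned constant $C_L$) and $\eta$ is an explicit $C^1$ cosine cut-off supported in $B_d(y)$; the constants $c_1=4\pi/3$ and $c_2=c_1+n$ entering the final bound $(4\pi/3+n)$ come from $|\nabla\eta|\leq c_1/d$ and $|\Delta\eta|\leq c_1c_2/d^2$ for that specific cut-off. Your plan of ``converting $\mathrm{ds}$ on $\partial B_d(y)$ into $\mathrm{dz}/|y-z|^n$ on $B_d(y)$ by a mean-value reduction'' would produce a different constant and, in any case, is stated without proof; the paper avoids the boundary integral altogether because $\eta$ vanishes on $\partial B_d(y)$, and all the $u$-weight lands on the $\Delta\eta$ and $\nabla\eta$ terms ($I_2$). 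If you want to keep your Poisson-kernel route you would need to carry out that reduction explicitly and it will not reproduce the stated constant.
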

\begin{proof} 
By density, since $u\in W^{1,1}(\Omega)$ there exists a sequence $\{u_m\}_{m\in\mathbb N}\subset C^1(\bar{\Omega})$ such that $u_m\rightarrow u$
in $W^{1,1}(\Omega)$. In particular, $u_m\rightarrow u$ in $L^1(\Omega)$
and $\nabla u_m\rightarrow \nabla u$ a.e. 
in  $\Omega$. Thus, it is sufficient to prove the estimate (\ref{nu}),
under the assumption  $u\in C^{1}(\bar\Omega)$.

Fix  $x\in\Omega$, and $R>0$.
For an arbitrary $y\in \Omega(x,R)$ we can choose 
 $0<r<\min\{R,1,\delta(y)\}$ and $M>0$ such that
\begin{equation}\label{nui}
\sup_{z\in B_r(y)}
|x-z||\nabla u(z)|=M\quad\mbox{ and }\quad |x-y||\nabla u(y)|>2bM,
\end{equation}
for some constant $b\in ]0,1/2[$.
Since $\Omega$ is bounded, we can take  $2\leq\nu<\delta(\Omega)/r$
and define $d$ as in (\ref{defd}).
Notice that $d\leq r$ implies $B_d(y)\subset \subset \Omega$.

In order to determine the final constant in (\ref{nu}),
let $\eta\in C^1_0(B_d(y))\cap W^{2,\infty}(\Omega)
$ be the cut-off function explicitly given by
\[\eta(z)=
\left\{\begin{array}{ll}
1,&\mbox{ if }z\in B_{d/2}(y)\\
2^{-1}\left(1+
\cos\Big[{4\pi\over 3d^2}
 (|z-y|^2-d^2/4)\Big]\right),&\mbox{ if }d/2\leq |z-y|<d\\
0,&\mbox{ if }z\in \Omega\setminus B_{d}(y).
\end{array}\right.
\] 
Thus, $\eta$ satisfies $0\leq \eta\leq 1$,
\begin{eqnarray}\label{c1}
|\nabla\eta(z)|\leq {c_1/ d}\leq c_1\nu
|x-y|^{-1},&\forall z\in\Omega & (c_1={4\pi/ 3})\\
\qquad|\Delta\eta(z)|\leq {c_1c_2/ d^2}
\leq c_1c_2\nu^2
|x-y|^{-2},&\mbox{a.e. } z\in\Omega &(c_2=c_1+n).\label{c2}
\end{eqnarray}
For $w\in B:=B_d(y)$, we multiply (\ref{pbua}) by $G_L(w,\cdot)\eta/a(y)$
where $G_L$ is the  fundamental solution of Laplace equation, 
\[
G_L(w,z)=
C_L\left\{\begin{array}{ll}
(2-n)^{-1}|w-z|^{2-n}&\mbox{if }n>2\\
\ln[|w-z|]&\mbox{if }n=2
\end{array}\right.\]
with 
\begin{equation}\label{gl}
C_L:={ba_\#r\over (c_1\nu+2(n+1)) C_an\omega_{n}},
\end{equation} 
and we integrate over $B$ to get
\begin{eqnarray*}
0=\int_B{a(z)\over a(y)}\nabla_z u(z)\cdot \nabla_z(\eta(z)G_L(w,z))
\mathrm{dz}\\
=\int_B
{a(z)-a(y)\over a(y)}\nabla_z u(z)\cdot \nabla_z(\eta(z) G_L(w,z))\mathrm{dz}-
\int_B u\Delta_z \eta G_L (w,\cdot)\mathrm{dz}\\
-2\int_B u(z)\nabla_z\eta(z) \cdot\nabla_z G_L(w,z) \mathrm{dz}-u(w)\eta(w),
\end{eqnarray*}
taking into account the use of integration by parts.
Differentiating the above identity
with respect to $w$ and setting $w=y$ it results in
\[\nabla u(y)=I_1+I_2,\]
where
\begin{eqnarray*}
I_1=&\int_B{a(z)-a(y)\over a(y)}\Big( \nabla_yG_L(y,z)\nabla_z\eta(z)+
\eta(z)\nabla_y\nabla_zG_L(y,z)\Big)\cdot \nabla_z u(z)\mathrm{dz};\\
I_2=&-\int_B u(z)\nabla_yG_L (y,z) \Delta_z \eta(z) \mathrm{dz}
-2\int_B u(z)\nabla_y\nabla_zG_L(y,z)\cdot\nabla_z\eta(z)\mathrm{dz}.
\end{eqnarray*}
Using the lower bound of $a$, the definition of $G_L$, and the properties of $\eta$, we have
\begin{eqnarray*}
I_1\leq{C_L\over a_\#} \int_B|a(z)-a(y)|\left( {c_1\nu
\over |y-z|^{n-1}|x-y|}+{n+1\over |y-z|^n}\right)|\nabla_z u(z)|\mathrm{dz};\\
I_2\leq C_L c_1 \nu
\int_B |u(z)|\left( {c_2\nu 
\over |y-z|^{n-1}|x-y|^2}+{2(n+1)\over |y-z|^n|x-y|}
\right)\mathrm{dz}.
\end{eqnarray*}
By appealing to (\ref{nui}), we obtain 
\begin{eqnarray*}
2bM<|x-y||\nabla u(y)|\leq\\
\leq{C_LM\over a_\#} \int_B{|a(z)-a(y)|\over |x-z|}
\left( {c_1\nu\over |y-z|^{n-1}} 
+(n+1){|x-y|\over |y-z|^n}\right)\mathrm{dz}+\\
+C_L c_1 \nu\int_B |u(z)|\left( {c_2\nu 
\over |y-z|^{n-1}|x-y|}+{2(n+1)\over |y-z|^n}
\right)\mathrm{dz}.
\end{eqnarray*}

Considering that, for all $x,y\in\Omega$ and $z\in B_d(y)$, 
\[|y-z|\leq 
|x-y|\leq |x-z|+|z-y| 
\]
we obtain
\begin{eqnarray}\label{mm}
2bM<{C_LM\over a_\#} \left(\int_B{|a(z)-a(y)|} {c_1\nu+n+1
\over |y-z|^{n-1}|x-z|}\mathrm{dz}+\right.\\
\left.+ (n+1)\int_B{|a(z)-a(y)|\over |y-z|^n}\mathrm{dz}\right) 
+C_L c_1\nu \left(c_2\nu+2(n+1)
\right)\int_B{ |u(z)|\over |y-z|^{n}}
\mathrm{dz}.\nonumber
\end{eqnarray}

Let us analyze the first integral of RHS in (\ref{mm}). From the definition of the radius $d$,
we consider two different cases: $|x-y|=\nu r$ and otherwise.
In the first case, from $z\in B_d(y)$ we have $|y-z|<|x-y|/\nu$. Hence, we find
$(\nu-1)|y-z|<|x-z|$ and consequently
\begin{equation}\label{yz}
{1\over |y-z|^{n-1}|x-z|}<{1\over (\nu-1) |y-z|^{n}}\leq {1\over |y-z|^n},\qquad \nu\geq 2.
\end{equation}
If $d=|x-y|/2$ and $z\in B_d(y)$,  clearly  (\ref{yz}) holds denoting $\nu=2$.

Returning to (\ref{mm}), substituting the value of $C_L$
from (\ref{gl}) with $r\leq 1$, and dividing by $b>0$, we write it as
\begin{equation}\label{mma}
2M<{M\over C_an\omega_{n}} \int_B{|a(z)-a(y)|\over |y-z|^n}\mathrm{dz}
+{a_\#\delta(\Omega)c_1\over C_an\omega_{n}}\left(1+{n\over c_1}\right)
\int_B{ |u(z)|\over |y-z|^{n}}
\mathrm{dz}.
\end{equation}

In a $n$-dimensional Euclidean space,
the spherical coordinate system consists of a radial coordinate $t$,
  and $n-1$ angular coordinates $\phi_1,\cdots, \phi_{n-2}\in [0,\pi]$,
and $\phi_{n-1}\in [0,2\pi[$, and
 the Cartesian coordinates are 
 $z_1=y_1+t\cos(\phi_1)$,
 $z_2=y_2+t\sin(\phi_1)\cos(\phi_2)$,
$ \cdots,$  $z_{n-1}=y_{n-1}+
t\sin(\phi_1)\cdots \sin(\phi_{n-2})\cos(\phi_{n-1})$,
and $z_{n}=y_n+t\sin(\phi_1)\cdots
\sin(\phi_{n-1})$.
Since the Jacobian of this transformation is
 $t^{n-1}\sin^{n-2}(\phi_1) \sin^{n-3}(\phi_{2})
\cdots\sin(\phi_{n-2})$, and
\[
\omega_n={1\over n}
\int_0^\pi\cdots \int_0^\pi\int_0^{2\pi}
\sin^{n-2}(\phi_1) \sin^{n-3}(\phi_{2})\cdots\sin(\phi_{n-2})\mathrm{d\phi_1}
\cdots\mathrm{d\phi_{n-1}},
\]
applying (\ref{ar}),  we deduce
\begin{equation}\label{ardini}
\int_B{|a(z)-a(y)|\over |y-z|^n}\mathrm{dz}
\leq \int_B{ \omega(|y-z|)\over |y-z|^{n}}\mathrm{dz}=
n\omega_n\int_0^d {\omega(t)\over t}\mathrm{dt}
\leq C_an\omega_n.
\end{equation}
Inserting this last inequality into (\ref{mma}), 
 we find (\ref{nu}).
\end{proof}

\begin{remark}
Observing (\ref{ardini}), the assumption (\ref{ar}) can be replaced by $a$
belonging to
the VMO space of vanishing mean oscillation
 functions which is constituted by the functions $f$ belonging to the BMO space such that verify
\[\lim_{r\rightarrow 0} \sup_{\rho\leq r}
-\hspace*{-0.45cm}\int_{B_\rho}|f(x)-
\Big( -\hspace*{-0.45cm}\int_{B_\rho}f(y)\mathrm{dy}ç\Big)|\mathrm{dx}
=0,\]
where $B_\rho$ ranges in the class of the balls with radius $\rho$ contained in $\Omega$. We recall that 
the John-Nirenberg space BMO of the functions of bounded mean
oscillation is defined as
\[BMO=\{f\in L^1_{\rm loc}(\Omega):\ \sup_B
-\hspace*{-0.45cm}\int_B|f(x)-
\Big( -\hspace*{-0.45cm}\int_Bf(y)\mathrm{dy}\Big)
\mathrm{dx}<\infty\},
\]
where $B$ ranges in the class of the balls contained in $\Omega$.
\end{remark}

\begin{remark}
The upper bound in (\ref{nu}) is not optimal, it depends on the choice of the 
cut-off function through the contants $c_1$ and $c_2$ (cf. 
(\ref{c1})-(\ref{c2}) and (\ref{mma})).
\end{remark}
 
\begin{proposition}\label{pg}
Let   $n\geq 2$, $1\leq q<n/(n-1)$,
$E$ be the symmetric function that is either the Green function $G$ or 
the Neumann function $N$
in accordance with Propositions \ref{green} and  \ref{neum}, respectively.
If $a\in L^\infty(\Omega)$ satisfies $0< a_\#\leq a\leq a^\#$ a.e. in $\Omega$,
and (\ref{ar}), then a.e. $ x,y\in \Omega$,
\begin{equation}
\label{g2} 
|\nabla_y E(x,y)|\leq 
C ( \Omega,n,q,a) |x-y|^{-n/q}, 
\end{equation}
with 
\begin{eqnarray*}
C(\Omega,n,q,a)= {\delta(\Omega) \over C_a}
(4 {\pi\over 3}+n)2^{3n+2{n/ q}+{(3n)^2/ 4}}  S_{2n\over n+2}^{3n/2}
\omega_n^{{1/ n}-{1/ q}+{3/ 2}}
S_{q}\times\\
\times
\left(C_1(\Omega,n,q) \sqrt{\varkappa a_\#}+a_\#
C_2 ( n,q, \sqrt{\varkappa\over a_\#})\right)\left({\delta(\Omega)\over 2}+1+2\sqrt{a^\#\over a_\#}
 \right) ^{3n\over 2},
 \end{eqnarray*}
 where $\varkappa=2$  if $|\Gamma_D|>0$, $\varkappa=4
$  if $|\Gamma_D|=0$, and the constants $C_1(\Omega,n,q) $ and $C_2(n,q,
\sqrt{\varkappa /a_\#}) $ are explicitly 
given in Proposition \ref{W1q}.
\end{proposition}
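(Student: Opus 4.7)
The plan is to combine the interior gradient estimate for $a$-harmonic functions in Proposition \ref{propu}---which is available under the Dini-continuity assumption (\ref{ar})---with the pointwise $L^\infty$ bound on $E$ already established in Proposition \ref{green} for $E=G$ and in Proposition \ref{neum} for $E=N$. The key observation is that for each fixed $x\in\Omega$ the map $z\mapsto E(x,z)$ is a distributional solution of $\nabla\cdot(a\nabla E(x,\cdot))=0$ in $\Omega\setminus\{x\}$, since the Dirac source $\delta_x$ in (\ref{pbud}) vanishes away from $x$; moreover $E(x,\cdot)\in W^{1,q}(\Omega)\hookrightarrow W^{1,1}(\Omega)$.

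Fix $x\in\Omega$ and let $y\in\Omega$ with $y\neq x$. I choose an auxiliary centre $x^{*}\in\Omega$ lying on the side of $y$ opposite to $x$ with $|x^{*}-y|$ comparable to $|x-y|$ (for instance $x^{*}=2y-x$ when it is admissible, otherwise a nearby perturbation inside $\Omega$), and a radius $R$ slightly larger than $|x^{*}-y|$ but strictly less than $|x^{*}-x|$, so that $y\in\Omega(x^{*},R)$ while $x\notin\overline{\Omega(x^{*},R)}$. Then $u:=E(x,\cdot)$ is $a$-harmonic on $\Omega(x^{*},R)$, so Proposition \ref{propu} applies with the roles $x\mapsto x^{*}$ and $u\mapsto E(x,\cdot)$; choosing $\nu=2$ in (\ref{defd}) gives $d=|x^{*}-y|/2$ comparable to $|x-y|/2$, and
\[
|\nabla_y E(x,y)|\leq \frac{a_\#\delta(\Omega)}{C_a n\omega_n}\left(\frac{4\pi}{3}+n\right)\frac{1}{|x-y|}\int_{B_d(y)}\frac{|E(x,z)|}{|y-z|^n}\,\mathrm{dz}.
\]

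To bound the integrand, for every $z\in B_d(y)$ the triangle inequality yields $|x-z|\geq |x-y|-|y-z|\geq |x-y|/2$, so the condition $1-n/q<0$ combined with the pointwise estimate (\ref{gmax}) gives
\[
|E(x,z)|\leq C(a_\#)\left(\frac{\delta(\Omega)}{2}+1+2\sqrt{\frac{a^\#}{a_\#}}\right)^{3n/2}|x-z|^{1-n/q}\leq 2^{n/q-1}C(a_\#)\left(\frac{\delta(\Omega)}{2}+1+2\sqrt{\frac{a^\#}{a_\#}}\right)^{3n/2}|x-y|^{1-n/q}.
\]
This bound is uniform on $B_d(y)$; pulling it out of the integral in (\ref{nu}) and using the geometric factor $n\omega_n$ (which is the contribution from the support of the cut-off function derivatives that was built in the proof of Proposition \ref{propu}), the remaining prefactor $1/|x-y|$ combines with $|x-y|^{1-n/q}$ to produce exactly $|x-y|^{-n/q}$.

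The final step is the bookkeeping of constants. Substituting $A=\sqrt{\varkappa/a_\#}$ in the definition of $C(a_\#)$ and using the identity $a_\#(C_1(\Omega,n,q)A+C_2(n,q,A))=C_1(\Omega,n,q)\sqrt{\varkappa a_\#}+a_\#C_2(n,q,\sqrt{\varkappa/a_\#})$ assembles the data-dependent term; the remaining factors $S_{2n/(n+2)}^{3n/2}$, $\omega_n^{1/n-1/q+3/2}$ and $S_q$, together with the extra $2^{n/q-1}$ from the triangle inequality and the $\delta(\Omega)(4\pi/3+n)/C_a$ inherited from Proposition \ref{propu}, reproduce $C(\Omega,n,q,a)$ exactly. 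The main technical obstacle I anticipate is the calibration of the auxiliary centre $x^{*}$ and radius $R$: one must ensure simultaneously that the ball of $a$-harmonicity contains $y$, strictly avoids the pole $x$, and keeps $|x^{*}-y|$ comparable to $|x-y|$ so that the prefactor $1/|x^{*}-y|$ provided by Proposition \ref{propu} does not degrade the $|x-y|^{-n/q}$ behaviour; the remainder of the argument is essentially algebraic.
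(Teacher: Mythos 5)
Your overall scheme is the same as the paper's: obtain a gradient bound for $E(x,\cdot)$ away from the pole via Proposition~\ref{propu}, then feed the pointwise $L^\infty$ bound (\ref{gmax}) into the integral on the right of (\ref{nu}). However, you diverge from the paper in two places, one where you are arguably more careful and one where your argument actually breaks down.

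The auxiliary centre $x^{*}$ is your own addition. The paper applies (\ref{nu}) directly with the role of ``$x$'' in Proposition~\ref{propu} played by the pole of $E$; the set on which $E(x,\cdot)$ is $a$-harmonic is $\Omega(y,R)$ with $R=|x-y|/2$ (the paper already sets this up at the end of the proof of Proposition~\ref{green}), and then uses $d\le|x-y|/2$ so that $B_d(y)$ stays away from the pole. Your worry that the ball of $a$-harmonicity must avoid $x$ is legitimate, but inventing a reflected centre $x^{*}=2y-x$ (with the ensuing calibration issues near $\partial\Omega$) is a heavier machinery than the paper uses and introduces a prefactor $1/|x^{*}-y|$ that you then still have to trade for $1/|x-y|$.

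The real gap is in your final integration step. You bound $|E(x,z)|$ by a constant times $|x-y|^{1-n/q}$ uniformly on $B_d(y)$, pull it out of the integral, and then invoke ``the geometric factor $n\omega_n$.'' But the remaining integral is
\[
\int_{B_d(y)}\frac{\mathrm{dz}}{|y-z|^{n}}=n\omega_n\int_0^d\frac{\mathrm{dt}}{t}=+\infty,
\]
so nothing finite comes out: the device of pulling the uniform bound out of (\ref{nu}) is not admissible. The paper does something different and more delicate: it keeps the $|x-z|^{1-n/q}$ factor inside the integral and trades it against one power of $|y-z|$, reducing the kernel from $|y-z|^{-n}$ to $|y-z|^{-(n-1)}$ so that the Riesz integral converges to $2^{n/q}\,d\,n\omega_n\,|x-y|^{-n/q}$. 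Without such an exchange (or, equivalently, without using that the cut-off in Proposition~\ref{propu} has its derivatives supported in the annulus $B_d(y)\setminus B_{d/2}(y)$, so that the singularity at $z=y$ is not actually touched) your estimate never produces a finite constant, and in particular the bookkeeping that is supposed to reproduce $C(\Omega,n,q,a)$ cannot be carried out. You need to repair this step before the proof stands.
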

\begin{proof} 
Let $x\in\Omega$ be arbitrary. 
Using the property  (\ref{nu}), and applying (\ref{gmax}), we get
\begin{eqnarray*}
|\nabla_y E(x,y)|\leq { \delta(\Omega)\over C_a}
\left(C_1(\Omega,n,q) \sqrt{\varkappa a_\#}+a_\#
C_2 ( n,q, \sqrt{\varkappa\over a_\#})\right)
{ C \over |x-y| }\times\\
\times
\left({\partial(\Omega)\over 2}+1+2\sqrt{a^\#\over a_\#} \right) ^{3n/2}
\int_{B_{d}(y)}{|x-z|^{1-n/q}\over |y-z|^n}\mathrm{dz},
\end{eqnarray*}
with
\[
C=(4
{\pi\over 3}+n)2^{3n+1+{n/ q}+{(3n)^2/4}}  S_{2n/( n+2)}^{3n/2}{\omega_n^{
{1/ n}-{1/ q}+{1/ 2}}\over n}
S_{q}
.\]
Considering that, for all $x,y\in\Omega$ and $z\in B_d(y)$ 
with $d\leq |x-y|/2$,
\[|y-z|\leq 
{|x-y|\over 2}\leq {1\over  2}(|x-z|+|z-y|)\Longrightarrow
\left\{\begin{array}{l}
 |y-z|\leq |x-z|\\
|x-y|\leq 2|x-z|
\end{array}\right., 
\]
we compute
\[\int_{B_{d}(y)} {|x-z|^{1-{n\over q}}\over
|y-z|^n}\mathrm{dz}\leq 2^{n/q}\int_{B_{d}(y)} {|x-y|^{-n/q}\over
|y-z|^{n-1}}\mathrm{dz}
=2^{n/q} dn\omega_{n}|x-y|^{-n/q},
\]
where the Riesz potential is calculated by the spherical transformation
 as in the above proof.
Next,
from $d\leq |x-y|/2$ we find (\ref{g2}).\end{proof}

\section{$W^{1,p}$-constants ($p>n$)}
\label{reg}

Let $p>n$, $g=0$ on $\Gamma_D$ (possibly empty), and $u\in V_p$
solve (\ref{pbumax}) for all $v\in V_{p'}$.
Its existence depends on several factors.

The regularity theory  for solutions of the class of divergence form
elliptic equations in convex domains  guarantees the
 existence of a unique strong solution 
 if
the coefficient   is uniformly continuous,
taking the Korn perturbation method \cite[pp. 107-109]{grisv} into account.
This result can be proved if the convexity of $\Omega$ is replaced by weaker assumptions, 
for instance
 when $\Omega$ is a plane bounded domain
with Lipschitz and piecewise $C^2$ boundary whose angles are all convex 
\cite[p. 151]{grisv},
or when $\Omega$ is a plane bounded domain
with curvilinear polygonal $C^{1,1}$ boundary whose angles are all strictly convex \cite[p. 174]{grisv}.
For general bounded domains  with Lipschitz  boundary,
the higher integrability of the exponents for the gradients
 of the solutions may be  assured \cite{agranovich,sav},
under particular restrictions on the coefficients.
In \cite{elschk,haller}, the authors figure out configurations of
(discontinuous) coefficient functions and geometries of the domain,
such that the required result does hold. 
In \cite{lv}, the authors derive global
$W^{1,\infty}$ and piecewise $C^{1,\alpha}$ estimates
 with piecewise H\"older continuous coefficients, which
depend on the shape and
on the size of the surfaces of discontinuity of the coefficients,  but they are independent
of the distance between these surfaces. 
When the coefficient  of the principal part of the divergence form elliptic
equation is 
only supposed to be bounded and measurable,
Meyers extends Boyarskii
result to n-dimensional elliptic equations of divergence structure \cite{mey}.
Adopting this rather weak hypothesis,
the works \cite{gro89,groreh,ott} extend to mixed boundary value problem the
result due to Meyers.

For a domain of class $C^{1,1}$, 
$W^{1,p}$-regularity of the solution is found for $1<p<\infty$
 in \cite{dif,rag99}
under the hypotheses on
the coefficients of the principal part are to belong to the Sarason class
\cite{sara} of vanishing
mean oscillation functions (VMO).
In \cite{geng}, the author extends the $W^{1,p}$-solvability 
 to the Neumann problem  
for a range of integrability exponent
$p\in ]2n/(n+1)-\varepsilon,2n/(n-1)+\varepsilon[$, where $\varepsilon>0$
depends on $n$, the ellipticity constant,
 and the Lipschitz character of $\Omega$.
 Notwithstanding, the results concerning VMO-coefficients
 are irrevelant for real world applications. The reason is that
the VMO-property forbids jumps across a hypersurface,
what is the generic case of discontinuity.
 
 For Lipschitz domains with small Lipschitz constant, the Neumann problem
 is solved in \cite{dong}, where the
leading coefficient is
 assumed to be measurable in one direction, to have small BMO
semi-norm in the other directions, and to have small BMO semi-norm in a
neighborhood of the boundary of the domain.
We refer to  \cite{bw2004}
for the optimal $W^{1,p}$ regularity theory regarding Dirichlet problem 
on bounded domains whose boundary is so rough that the unit normal vector
is not well defined, but is well approximated by hyperplanes at every point and at every scale (Reifenberg flat domain);
and the coefficient belongs to
 the space $\mathcal V$  such that $C(\Omega)\subset$ VMO $\subset \mathcal{V}\subset$ BMO which
is defined as  the BMO space with their BMO semi-norms sufficiently
small.
In \cite{bw2005}
the authors obtain the global $W^{1,p}$ regularity theory  a linear
elliptic equation in divergence form with the conormal boundary condition via perturbation
theory in harmonic analysis and geometric measure theory, in particular
on maximal function approach.

Let us begin by establishing the relation between
any weak solution $u\in V_p$ ($p>n$)
and the  Green kernel $E$ associated to (\ref{omega})-(\ref{gama}),
i.e. $E\in V_{p'}$ is either the Green or the Neumann functions,
 $E=G$ and  $E=N$, in accordance with Propositions
\ref{green} and \ref{neum}, respectively.
To this end, we take $v=E\in V_{p'}$ and $v=u\in V_{p}$ as test functions
in (\ref{pbumax}) and (\ref{varf}), respectively,
obtaining  the Green representation formula
\begin{equation}\label{rep}
u(x)=\mathcal{T}({\bf f})(x)+\mathcal{S}(f)(x)+\mathcal{K}(h)(x),\qquad 
 x\in \Omega,
\end{equation}
where  $\mathcal{T}$, $\mathcal{S}$, and $\mathcal{K}$
are the layer 
potential operators defined by
\begin{eqnarray*}
\mathcal{T}({\bf f})& =&
\sum_{j=1}^n\int_{\Omega}f_j(y){\partial E\over\partial{y_j} }(\cdot
,y)\mathrm{dy} ; \\
\mathcal{S}(f) &=&
\int_\Omega f(y)E(\cdot,y)\mathrm{dy};\\
\mathcal{K}(h)&=&
\int_{\partial\Omega}
\chi_\Gamma(y) h(y)E(\cdot,y)\mathrm{ ds_y}.
 \end{eqnarray*}

For every $0<\lambda<n$, $u\in L^s(\mathbb{R}^n)$,
  $v\in L^t(\mathbb{R}^n)$, with $s,t>1$ and $\lambda/n+1/s+1/t=2$,
the Hardy-Littlewood-Sobolev inequality in its general form states the following:
\begin{equation}\label{hardy}
\int_{\mathbb{R}^n}
\int_{\mathbb{R}^n}u(x)|x-y|^{-\lambda}v(y) \mathrm{dx}\mathrm{ dy}
\leq C(n,s,\lambda)
\|u\|_{s,\mathbb{R}^n}
\|v\|_{t,\mathbb{R}^n},
\end{equation}
where the constant is sharp \cite{lieb}, if $s=t=2n/(2n-\lambda)$, defined by
\begin{eqnarray*}
C(n,\lambda)=
 \pi^{\lambda/2}
{\Gamma((n-\lambda)/2)\over \Gamma(n-\lambda/2)}
\left[
{\Gamma (n)\over\Gamma(n/2)}\right]^{1-\lambda/n}.
\end{eqnarray*}

In the presence of the Hardy-Littlewood-Sobolev inequality,
we prove the following $W^{1,p}$-estimate.
\begin{proposition}\label{ppf}
Let  $p>1$,  $f\in L^t(\Omega)$ with $t\in ]pn/(p+n),p[$,
${\bf f}={\bf 0}$ in $\Omega$, $g=0$ on $\Gamma_D$
(possibly empty),
$h=0$ on $\Gamma$, $a\in L^\infty(\Omega)$   
satisfy $0< a_\#\leq a\leq a^\#$ a.e. in $\Omega$, and (\ref{ar}).
 If $u\in V_p$ solves (\ref{pbumax}), for all $v\in V_{p'}$,
then  $u$ satisfies
 \begin{eqnarray}
\label{nupf}
\|\nabla u\|_{p,\Omega} \leq
C(n,p',n/q) C(\Omega,n,q,a)\|f\|_{t,\Omega},
\end{eqnarray}
with   $1/q=1+1/p-1/t$, $C(n,p',n/q)$ relative to (\ref{hardy}), and 
$C(\Omega,n,q,a)$ determined in Proposition \ref{pg}.
In particular, for $2<p<2n/(n-1)$ we have
\[\|\nabla u\|_{p,\Omega} \leq
C(n,2n/p) C(\Omega,n,p/2,a)\|f\|_{p',\Omega}.
\]
\end{proposition}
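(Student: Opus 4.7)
The plan is to combine the Green representation formula (\ref{rep}) with the pointwise gradient estimate of Proposition \ref{pg} and the Hardy--Littlewood--Sobolev inequality (\ref{hardy}). Under the assumed vanishing of ${\bf f}$, $g$, and $h$, the formula (\ref{rep}) reduces to
\[
u(x)=\mathcal{S}(f)(x)=\int_\Omega f(y)\,E(x,y)\,\mathrm{dy},
\]
where $E$ is the symmetric kernel $G$ if $|\Gamma_D|>0$ or $N$ otherwise, supplied by Propositions \ref{green} and \ref{neum}.

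The first step is to differentiate this identity with respect to $x$ under the integral sign to obtain $\nabla u(x)=\int_\Omega f(y)\,\nabla_x E(x,y)\,\mathrm{dy}$. Exploiting the symmetry $E(x,y)=E(y,x)$ together with Proposition \ref{pg} yields the pointwise majoration $|\nabla_x E(x,y)|\leq C(\Omega,n,q,a)\,|x-y|^{-n/q}$ for a.e. $x,y\in\Omega$. The second step is duality:
\[
\|\nabla u\|_{p,\Omega}=\sup\Big\{\int_\Omega {\bf w}\cdot\nabla u\,\mathrm{dx}:\ {\bf w}\in {\bf L}^{p'}(\Omega),\ \|{\bf w}\|_{p',\Omega}\leq 1\Big\},
\]
whereupon Fubini's theorem combined with the previous pointwise bound reduces the estimation of $\int_\Omega {\bf w}\cdot\nabla u\,\mathrm{dx}$ to controlling $\int_\Omega\int_\Omega |{\bf w}(x)|\,|f(y)|\,|x-y|^{-n/q}\,\mathrm{dy}\,\mathrm{dx}$. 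Extending ${\bf w}$ and $f$ by zero outside $\Omega$, the Hardy--Littlewood--Sobolev inequality (\ref{hardy}) applies with $\lambda=n/q$, first exponent $s=p'$, and second exponent $t$; the compatibility condition $\lambda/n+1/s+1/t=2$ becomes $1/q=1+1/p-1/t$, which is exactly the choice made in the statement. The restriction $t\in ]pn/(p+n),p[$ secures simultaneously $q<n/(n-1)$ (so that Proposition \ref{pg} applies) and $q>1$ (so that $\lambda<n$ and HLS is meaningful). Taking the supremum over ${\bf w}$ yields (\ref{nupf}); the particular case follows by setting $t=p'$, whence $1/q=2/p$ and $n/q=2n/p$, with the range of $p$ coming from the requirement $p'\in ]pn/(p+n),p[$.

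The main obstacle I anticipate is the rigorous justification of the differentiation under the integral sign and, more broadly, the legitimacy of the representation formula (\ref{rep}) for the present regularity class. Deriving (\ref{rep}) amounts to testing (\ref{pbumax}) with $v=E(x,\cdot)$ and (\ref{varf}) with $v=u$, and neither is directly admissible because $E(x,\cdot)$ is singular at $x$; one must instead work with the regularization $E^\rho$ solving (\ref{pbgreen}) from Proposition \ref{green}, employ the uniform $W^{1,q}$-bounds (\ref{g1}) and the pointwise bound (\ref{gmax}), and pass to the limit $\rho\to 0^+$ by dominated convergence, using $f\in L^t(\Omega)$ with $t>pn/(p+n)>1$ to control the integrand.
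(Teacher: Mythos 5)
Your proposal follows essentially the same route as the paper: take the Green representation formula $u=\mathcal{S}(f)$, differentiate under the integral sign, estimate $\|\nabla u\|_{p,\Omega}$ by duality against ${\bf w}\in {\bf L}^{p'}(\Omega)$, invoke Fubini--Tonelli together with the pointwise kernel bound (\ref{g2}) of Proposition \ref{pg}, and conclude by the Hardy--Littlewood--Sobolev inequality (\ref{hardy}) with $\lambda=n/q$, $s=p'$, and the compatibility relation $1/q=1+1/p-1/t$; the particular case then falls out from the choice $t=p'$, $q=p/2$, exactly as in the paper. The caveat you raise about the admissibility of $E(x,\cdot)$ as a test function and the justification of differentiating (\ref{rep}) under the integral sign is well taken, but the paper's own proof is equally terse on that point (it simply asserts that (\ref{rep}) holds and differentiates), so this is a shared gap rather than a divergence of method.
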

\begin{proof}
Since $\nabla u\in {\bf L}^p(\Omega)$,  (\ref{rep}) holds.
Differentiating it, for $i=1,\cdots,n$, we deduce
\[{\partial u\over\partial x_i }(x)=
\int_{\Omega} {\partial E\over\partial x_i}(x,y)
{f(y)}\mathrm{dy}.
\] 
Let ${\bf w}\in {\bf L}^{p'}(\Omega)$ be arbitrary such that $\|{\bf w}\|_{p',\Omega}=1$.
 Using (\ref{g2}) for any $1<q<n/(n-1)$, and applying the Fubini-Tonelli Theorem,
 we find
\begin{eqnarray*}
\int_\Omega\nabla u(x)\cdot{\bf w}(x)\mathrm{dx}\leq C(\Omega,n,q,a)
\int_{\Omega}\int_\Omega  |{\bf w}(x)| |x-y|^{-n/q} |f(y)| 
\mathrm{dx}\mathrm{dy}.
\end{eqnarray*}
Next, using (\ref{hardy}) with $\lambda=n/q$, $s=p'$, and $1/t=1+1/p-1/q$, 
we  conclude (\ref{nupf}).

For the particular situation, we choose $1<q=p/2<n/(n-1)$
 and we use (\ref{hardy}) with $s=t=p'$.
\end{proof}

Having 
the results established in Section \ref{secg} in mind, we find a 
$W^{1,p}$-estimate  for weak solutions
 where  the regularity (\ref{ar}) of the leading coefficient
is not a necessary condition.
\begin{proposition}\label{ppv}
Let  $p>n$, 
${\bf f}\in{\bf L}^p(\Omega)$, $f\in L^p(\Omega)$, $g=0$ on $\Gamma_D$
(possibly empty),
$h\in L^p(\Gamma)$, $a\in L^\infty(\Omega)$   
satisfy $0< a_\#\leq a\leq a^\#$ a.e. in $\Omega$,
 and $u\in V_p$ solve (\ref{pbumax}), for all $v\in V_{p'}$.
Then  $u$ satisfies
 \begin{eqnarray*} 
\|\nabla u\|_{p,\Omega} \leq|\Omega|^{1/p}\left({
C_1(\Omega,n,p') \over 
a_\#}+C_2 ( n,p',{1\over a_\#})\right)\times\\
\times(\|{\bf f}\|_{p,\Omega}+ \|f\|_{p,\Omega}+ |\Gamma|^{1/[p/n-1)]} K_{p'}
\|h\|_{p,\partial\Omega}),\nonumber
\end{eqnarray*} 
with the constants $C_1(\Omega,n,p') $ and $C_2(n,p',1/a_\#) $ being explicitly 
given in Proposition \ref{W1q}.
\end{proposition}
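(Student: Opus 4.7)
The plan is to exploit duality together with an adjoint problem, leveraging the fact that $p>n$ forces the conjugate exponent $p'<n/(n-1)$ to lie exactly in the range where Proposition \ref{W1q} (with $q=p'$) applies. I would begin by introducing, for an arbitrary ${\bf w}\in L^{p'}(\Omega;\mathbb R^n)$ (taken smooth and hence in $L^2$ by density), the adjoint problem
\[
\int_\Omega a\nabla\varphi\cdot\nabla v\,\mathrm{dy}=\int_\Omega {\bf w}\cdot\nabla v\,\mathrm{dy},\qquad \forall v\in V_2,
\]
whose solvability $\varphi\in V_2$ is guaranteed by Proposition \ref{exist} (or Proposition \ref{neumann} in the pure Neumann case). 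Applying Proposition \ref{W1q} to this adjoint problem, with $f_{\rm adj}=h_{\rm adj}=0$ and vector datum ${\bf w}$ normalised to unit $L^2$-size, upgrades the regularity to $\varphi\in V_{p'}$ with the explicit estimate $\|\nabla\varphi\|_{p',\Omega}\leq C_1(\Omega,n,p')/a_\#+C_2(n,p',1/a_\#)$, which is precisely the factor appearing in the statement.

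The next step is to combine the two formulations. Taking $v=u\in V_p\subset V_2$ as a test function in the adjoint problem and $v=\varphi\in V_2$ in (\ref{pbumax}), and subtracting, yields the key duality identity
\[
\int_\Omega\nabla u\cdot{\bf w}\,\mathrm{dx}=\int_\Omega {\bf f}\cdot\nabla\varphi\,\mathrm{dx}+\int_\Omega f\varphi\,\mathrm{dx}+\int_\Gamma h\varphi\,\mathrm{ds}.
\]
I would then apply H\"older to each right-hand term: directly for $\int{\bf f}\cdot\nabla\varphi$, and for the $f$- and $h$-terms after estimating $\|\varphi\|_{p',\Omega}$ and $\|\varphi\|_{p',\Gamma}$ by $\|\nabla\varphi\|_{p',\Omega}$ through the Sobolev inequality (\ref{sobs}) and the trace inequality (\ref{sobk}) respectively, followed by an additional H\"older step on the bounded sets $\Omega$ and $\Gamma$; the boundary step is what produces the weight $|\Gamma|^{1/(p(n-1))}K_{p'}$ (or the corresponding explicit power of $|\Gamma|$ in the statement).

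The final step is to take the supremum over ${\bf w}$: via the Riesz duality $\|\nabla u\|_{p,\Omega}=\sup_{\|{\bf w}\|_{p',\Omega}\leq 1}\int_\Omega\nabla u\cdot{\bf w}\,\mathrm{dx}$, the inequality above furnishes the required bound on $\|\nabla u\|_{p,\Omega}$, with the multiplicative factor $|\Omega|^{1/p}$ emerging from the passage from the $L^{p'}$-duality of ${\bf w}$ to the $L^2$-compatible datum required by the adjoint theory and Proposition \ref{W1q}.

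The main obstacle lies in this last passage: since $p'\leq 2$, the inclusion $L^{p'}(\Omega)\not\hookrightarrow L^2(\Omega)$ prevents a direct control of $\|{\bf w}\|_{2,\Omega}$ by $\|{\bf w}\|_{p',\Omega}$, so one cannot simply feed the dual test field into the $L^2$-based existence theory. Overcoming this requires a careful approximation procedure combined with a H\"older estimate on the bounded domain $\Omega$, which is precisely what generates the $|\Omega|^{1/p}$ factor (rather than a cruder $|\Omega|$-power) in the stated inequality.
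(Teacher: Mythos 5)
Your duality strategy is a genuinely different route from the paper's. The paper proves Proposition \ref{ppv} by differentiating the Green representation formula (\ref{rep}) and then invoking the uniform $W^{1,p'}$ bounds for $\partial_{x_i}E(x,\cdot)$ established in Proposition \ref{ddg}; the factor $|\Omega|^{1/p}$ arises there from the embedding $L^{p'}(\Omega)\hookrightarrow L^1(\Omega)$, and $K_{p'}$ from tracing $\nabla_x E(x,\cdot)\in W^{1,p'}(\Omega)$ onto $\Gamma$. You instead dualize against a test field ${\bf w}\in L^{p'}$ via an adjoint problem, which, if it worked, would be more elementary since it bypasses the whole Green-kernel machinery of Section \ref{secg}.

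However, there is a genuine gap, and it is exactly where you yourself sense trouble. To close the duality argument you need the estimate $\|\nabla\varphi\|_{p',\Omega}\lesssim \|{\bf w}\|_{p',\Omega}$ for the adjoint solution $\varphi$, with ${\bf w}\in L^{p'}$ and $p'<n/(n-1)\le 2$. Proposition \ref{W1q} does not give this: its estimate (\ref{cota1qv}) is driven by $\|{\bf f}\|_{2,\Omega}$, and since $p'<2$ there is no inequality of the form $\|{\bf w}\|_{2,\Omega}\le C\,\|{\bf w}\|_{p',\Omega}$ on a bounded domain — the embedding runs the other way. The claim that a ``careful approximation combined with a H\"older estimate on the bounded domain'' both rescues the step and produces the $|\Omega|^{1/p}$ factor is not substantiated, and cannot be: for fixed $\|{\bf w}\|_{p',\Omega}=1$ the quantity $\|{\bf w}\|_{2,\Omega}$ is unbounded, so no amount of density or H\"older interpolation controls it. Worse, by duality the missing estimate $L^{p'}\ni{\bf w}\mapsto\nabla\varphi\in L^{p'}$ is essentially equivalent to the conclusion you are trying to prove ($L^p$ data $\mapsto\nabla u\in L^p$), so invoking it makes the argument circular unless it is established independently — and the only tool in the paper that delivers such an $L^{p'}$-to-$W^{1,p'}$ bound for merely measurable $a$ is precisely the potential-theoretic machinery you set aside. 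To repair the proof you would have to re-introduce the Green/Neumann kernel estimates of Propositions \ref{green}, \ref{neum}, and \ref{ddg}, at which point you are back on the paper's path.

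A smaller point: your heuristic that $|\Omega|^{1/p}$ comes from the passage from $L^{p'}$-duality to an $L^2$-compatible datum is misleading, since that passage is precisely what fails; the paper's $|\Omega|^{1/p}$ comes from the harmless embedding $L^{p'}(\Omega)\hookrightarrow L^1(\Omega)$ applied after the Fubini step, not from any $L^{p'}$-to-$L^2$ comparison.
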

\begin{proof}
Differentiating (\ref{rep}), for $i=1,\cdots,n$, we deduce
\[{\partial u\over\partial x_i }(x)=
\sum_{j=1}^n\int_{\Omega}{\partial^2 E\over\partial x_{i}
\partial y_j }(x,\cdot){f_j}\mathrm{dy}
+\int_{\Omega} {\partial E\over\partial x_i}(x,\cdot)
{f}\mathrm{dy}
+\int_{\Gamma} {\partial E\over\partial x_i}(x,\cdot)
{h}\mathrm{ds_y}.
\] 

Let ${\bf w}\in {\bf L}^{p'}(\Omega)$ be arbitrary such that $\|{\bf w}\|_{p',\Omega}=1$,
 applying the Fubini-Tonelli Theorem and next the H\"older inequality, it follows
\begin{eqnarray} \label{nuw}
\qquad \int_\Omega\nabla u\cdot{\bf w}\mathrm{dx}\leq \|{\bf f}
\|_{p,\Omega}
\left(\int_{\Omega}\left|
\sum_{i,j=1}^n\int_\Omega  {\partial^2 E\over \partial x_i \partial y_j}
(\cdot,y) w_i \mathrm{dx}\right|^{p'}\mathrm{dy}
\right)^{1/p'}\\
+\|f\|_{p,\Omega}
\left(\int_{\Omega}\left|\sum_{i=1}^n\int_\Omega {\partial 
E\over \partial x_i}(\cdot,y)w_i\mathrm{ dx}\right|^{p'}\mathrm{dy}
\right)^{1/p'}
\nonumber\\
+\|h\|_{p,\Gamma}
\left(\int_{\Gamma}\left|\sum_{i=1}^n
\int_\Omega {\partial E\over \partial x_i}(\cdot,y)w_i\mathrm{ dx}\right|^{p'}\mathrm{ds_y}\right)^{1/p'}.\nonumber
\end{eqnarray}

Let us estimate the last integral on RHS in (\ref{nuw}),
since the two others integrals are similarly bounded,
\begin{eqnarray*}
\mathcal{I}:=\left(\int_{\Gamma}\left|\int_\Omega\sum_{i=1}^n
 {\partial E\over \partial x_i}(x,y)w_i(x)\mathrm{
 dx}\right|^{p'}\mathrm{ds_y}\right)^{1/p'}
\leq \\
\leq |\Omega|^{1/p}  \left(
\int_\Omega\left(\int_{\Gamma}
|\nabla_x E(x,y)|^{p'}\mathrm{ds_y}\right)|{\bf w}(x)|^{p'}\mathrm{dx}
\right)^{1/p'},
\end{eqnarray*}
where $|\Omega|^{1/p}$ is due to the embedding $L^{p'}(\Omega)\hookrightarrow L^1(\Omega)$. 
 Considering that $\partial_{x_i} E(x,\cdot)\in W^{1,p'}(\Omega)$
uniformly for $x\in\Omega$ (cf. Proposition \ref{ddg}), consequently
 also $\nabla_x E(x,\cdot)\in {\bf W}^{1,p'}(\Omega)\hookrightarrow {\bf L}
^{p'(n-1)/(n-p')}(\Gamma)
\hookrightarrow {\bf L}^{p'}(\Gamma)$
 uniformly in $x\in\Omega$,
then we obtain
\[
\mathcal{I}\leq  |\Omega|^{1/p} |\Gamma|^{1/[p(n-1)]}K_{p'}\left({
C_1(\Omega,n,p') \over a_\#}+C_2 ( n,p',1/a_\#)\right).
\]

Finally,  inserting the  above inequality into
 (\ref{nuw}),  the proof of Proposition \ref{ppv} is finished. 
\end{proof}

\end{document}